\documentclass[reqno,11pt]{amsart}
\usepackage{graphicx,epsfig}
\usepackage{epstopdf}
\usepackage{pdfpages}
\usepackage[labelfont=up]{subcaption}
\usepackage{amssymb}
\usepackage{empheq}
\usepackage{cases}
\usepackage{amsthm,amsmath}
\usepackage{algorithm}
\usepackage{algpseudocode}
\usepackage{caption,lipsum}
\usepackage{stmaryrd}
\usepackage{tabularx}
\usepackage{color}
\usepackage{empheq,float}
\DeclareGraphicsExtensions{.eps}
\usepackage{amssymb,amsmath,graphicx,amsfonts,euscript,mathrsfs}
\usepackage{bm}
\usepackage{hyperref}
\usepackage{enumerate}

\newtheorem{theorem}{Theorem}[section]
\newtheorem{lemma}[theorem]{Lemma}

\theoremstyle{definition}

\newtheorem{example}[theorem]{Example}

\newtheorem{assumption}[theorem]{Assumption}
\theoremstyle{remark}
\newtheorem{remark}{Remark}[section]

\numberwithin{equation}{section}

\newcommand{\rd}{{\mathrm{d}}}
\newcommand{\E}{{\mathbb E}}
\newcommand{\bDelta}{\boldsymbol{\Delta}}
\newcommand{\bN}{{\mathbb N}}
\newcommand{\bT}{{\mathbb T}}
\newcommand{\bR}{{\mathbb R}}
\newcommand{\bmu}{\boldsymbol{\mu}}

\newcommand{\bPhi}{\boldsymbol{\Phi}}
\newcommand{\bxi}{\boldsymbol{\xi}}
\newcommand{\by}{\boldsymbol{y}}
\newcommand{\bz}{\boldsymbol{z}}

\newcommand{\cB}{\mathcal{B}}
\newcommand{\cD}{\mathcal{D}}

\newcommand{\cI}{\mathcal{I}}
\newcommand{\cJ}{\mathcal{J}}
\newcommand{\cN}{\mathcal{N}}

\newcommand{\cW}{\mathcal{W}}

\newcommand{\re}{\mathrm{e}}
\newcommand{\ri}{\mathrm{i}}

\parindent = 20 pt
\parskip = 5 pt
\textwidth 6.5in \textheight 9.6in \setlength{\topmargin}{0.1in}
\addtolength{\topmargin}{-\headheight}
\addtolength{\topmargin}{-\headsep}

\setlength{\oddsidemargin}{0in} \oddsidemargin  0.0in
\evensidemargin 0.0in

\setlength{\oddsidemargin}{0in} \oddsidemargin  0.0in
\evensidemargin 0.0in \textwidth 6.5in \textheight 9.2in
\setlength{\topmargin}{0.1in} \addtolength{\topmargin}{-\headheight}
\addtolength{\topmargin}{-\headsep}

\begin{document}
	\title[QMC-TS for Schr\"odinger equation with Gaussian random potential]{Quasi-Monte Carlo time-splitting methods for the Schr\"odinger equation with Gaussian random potential}
	
	\author[Z. Wu]{Zhizhang Wu}
	\address{\hspace*{-12pt}Z.~Wu: Department of Mathematics, The University of Hong Kong, Pokfulam Road, Hong Kong SAR, China. Now at Department of Mathematics, Hong Kong Baptist University, Kowloon Tong, Hong Kong SAR, China.}
	\email{wuzz25@hkbu.edu.hk, wzz14@tsinghua.org.cn}
	
	\author[Z. Zhang]{Zhiwen Zhang}
	\address{\hspace*{-12pt}Z.~Zhang: Department of Mathematics, The University of Hong Kong, Pokfulam Road, Hong Kong SAR, China. Materials Innovation Institute for Life Sciences and Energy (MILES), HKU-SIRI, Shenzhen, China.}
	\email{zhangzw@hku.hk}
	
	\author[X. Zhao]{Xiaofei Zhao}
	\address{\hspace*{-12pt}X.~Zhao: School of Mathematics and Statistics \& Computational Sciences Hubei Key Laboratory, Wuhan University, Wuhan, 430072, China.}
	\email{matzhxf@whu.edu.cn}
	\urladdr{http://jszy.whu.edu.cn/zhaoxiaofei/en/index.htm}
	
	\begin{abstract}\noindent
		In this paper, we study the Schr\"odinger equation with a Gaussian random potential (SE-GP) and develop an efficient numerical method to approximate the expectation of physical observables. The unboundedness of Gaussian random variables poses significant difficulties in both sampling and error analysis. Under time-splitting discretizations of SE-GP, we establish the regularity of the semi-discrete solution in the random space. Then, we introduce a non-standard weighted Sobolev space with properly chosen weight functions, and obtain a randomly shifted lattice-based quasi-Monte Carlo (QMC) quadrature rule for efficient sampling. This approach leads to a QMC time-splitting (QMC-TS) scheme for solving the SE-GP. We prove that the proposed QMC-TS method achieves a dimension-independent convergence rate that is almost linear with respect to the number of QMC samples. Numerical experiments illustrate the sharpness of the error estimate.
		\\ \\
		{\bf Keywords:} Schr\"odinger equation; Gaussian random potential; quasi-Monte Carlo (QMC) method; time splitting; error estimate; optimal rate. \\ \\
		{\bf AMS Subject Classification:} 65C30; 65D32; 65M15; 82B44.
	\end{abstract}
	
	\maketitle
	
	\section{Introduction}
	\label{sec: introduction}
	
	The Schr\"{o}dinger equation with a spatial random potential of the following form
	\begin{equation}\label{eq: original Schrodinger}
		\ri \partial_t\psi(t,\omega,x) = -\frac{1}{2} \partial_x^2 \psi(t,\omega,x) + V(\omega,x) \psi(t,\omega,x),
	\end{equation}
	plays an essential role for describing wave propagation in disordered media \cite{Conti,JMP,Lifshits},
	where $t$ is the time variable, $x$ is the space variable, $\omega$ is a random sample, $\psi=\psi(t,\omega,x)$ is the unknown complex-valued wavefunction, and $V$ is an external real-valued spatial random potential. It is also known as the continuous version of the original Anderson model \cite{anderson1958absence} for the localization phenomenon and is mathematically of great interest \cite{Debussche2,Debussche3,Dumaz,Klein,zhao2021numerical}. In practice, the spatial random potential $V$ could be uniformly distributed in a bounded interval \cite{Flach} or be Gaussian noise \cite{Conti,Gauss1}.  In this work, we will be interested in the case that $V(\omega, x) =v_0(x)+V_{\mathrm{r}}(\omega,x)$ with $v_0(x)$ a deterministic function and $V_{\mathrm{r}}(\omega,x)$ a zero-mean Gaussian random field, and we aim to provide an efficient numerical algorithm to solve (\ref{eq: original Schrodinger}) particularly addressing the sampling issue. Throughout this paper, we focus on the one-space-dimensional case for simplicity, i.e., $x \in \bR$; we refer the reader to Remark \ref{rem: high-space-dimensional cases} and Point (2) in Section \ref{subsec: discussion} for discussions on high-space-dimensional cases.
	
	Although the standard Monte Carlo (MC) method is handy as used in most of the simulation work, e.g., \cite{Conti,Flach,zhao2021numerical}, we get only a half-order convergence rate in the number of samples, and hence a large number of simulations are required for MC to accurately evaluate the statistical quantities. More efficient options for discretizing the random space include the quasi-Monte Carlo (QMC) methods \cite{caflisch1998monte,dick2022lattice,dick2013high,graham2015quasi,kuo2016application,kuo2011quasi,wang2003strong}, stochastic Galerkin methods \cite{cohen2010convergence,ghanem1991stochastic,hu2015stochastic,xiu2002wiener}, stochastic collocation methods \cite{babuvska2007stochastic,nobile2008sparse,tang2010convergence}, etc. Particularly for solving (\ref{eq: original Schrodinger}), \cite{wu2016bloch} and \cite{wu2020convergence} applied the stochastic Galerkin method and the stochastic collocation method, respectively, while due to the curse of dimensionality with respect to the number of samples \cite{nobile2008sparse,shu2017stochastic}, only cases with a one-dimensional random variable were considered. Then, a QMC approach for (\ref{eq: original Schrodinger}) under uniformly distributed $V$ has been considered in our previous work \cite{wu2024error}, where an almost linear convergence rate with dimension-independence is achieved.
	The analysis and the QMC quadrature rule proposed therein rely on the boundedness of $V$ and would fail for a Gaussian random field. Thus, we continue in this work the development of QMC towards the Schr\"odinger model but with a Gaussian random potential. We remark that QMC has also been applied to Schr\"{o}dinger equations in the deterministic case in \cite{suzuki2018rank,suzuki2019strang}, where QMC sample points and the corresponding anti-aliasing set serve as lattices for the Fourier pseudospectral method for spatial discretization.
	
	In view of the Karhunen--Lo\`{e}ve expansion \cite{ghanem1991stochastic,karhunen1947uber,loeve1948fonctions}, the Gaussian random potential in \eqref{eq: original Schrodinger} admits the following parametric representation:
	\begin{align} \label{eq: random-potential parametric}
		V(\omega,x) = V(\bxi(\omega), x) = v_0(x) + V_{\mathrm{r}}(\bxi(\omega),x), \text{ where } V_{\mathrm{r}}(\bxi(\omega),x) = \sum_{j=1}^{\infty} \lambda_j \xi_j(\omega) v_j(x),
	\end{align}
	with $\{ v_j(x) \}_{j = 1}^{\infty}$ the physical components, $\lambda_1 \ge \lambda_2 \ge \cdots > 0$ the corresponding strengths,  $\{ \xi_j(\omega) \}_{j = 1}^{\infty}$ the independent and identically distributed (i.i.d.) standard Gaussian random variables and $\bxi(\omega) = (\xi_1(\omega),\xi_2(\omega), \ldots)^{\top}\in \bR^{\bN}$ (see, e.g., \cite{charrier2012strong,graham2015quasi} for more details on the parametric representation of a Gaussian random field in the form of Karhunen--Lo\`{e}ve expansion). The law of $\bxi$ is defined on the product probability space $(\bR^{\bN}, \mathcal{B}(\bR^{\bN}), \bmu_{\infty})$, where $\mathcal{B}(\bR^{\bN})$ is the sigma-algebra generated by the cylinder sets, and $\bmu_{\infty}$ is the product Gaussian measure  \cite{bogachev1998gaussian}, i.e., $\bmu_{\infty} = \bigotimes_{j = 1}^{\infty} \cN(0, 1)$. Then, by the Doob--Dynkin lemma \cite{kallenberg2021foundations}, the solution $\psi$ of (\ref{eq: original Schrodinger}) can be represented by a function parameterized by $\bxi$. Hence, we can consider the initial value problem of \eqref{eq: original Schrodinger} in the parametric form as
	\begin{equation} \label{eq: Schrodinger}
		\left\{
		\begin{aligned}
			&\ri \partial_t \psi(t,\bxi,x) = -\frac{1}{2} \partial_x^2 \psi(t,\bxi,x) + V(\bxi,x) \psi(t,\bxi,x),
			\quad x\in \bT,\ \bxi \in U := \bR^{\bN},\ t>0,\\
			&\psi(t=0,\bxi,x) = \psi_{\mathrm{in}}(x),\quad x\in \bT,\ \bxi \in U,
		\end{aligned}
		\right.
	\end{equation}
	where $\psi_{\mathrm{in}}$ represents a prescribed (deterministic) initial wave and $\bT$ is the one-dimensional torus (periodic boundary).  The torus domain serves as a valid approximation of the whole space problem when the initial localized wave is yet to reach the boundary.
	
	Our aim is still for a dimension-independent first-order QMC approximation of (\ref{eq: Schrodinger}), since a random potential from reality can be rough, so the series in (\ref{eq: random-potential parametric}) decays very slowly. To achieve this goal,
	we choose to work under the framework of QMC with the randomly shifted lattice rule \cite{dick2004convergence,dick2022lattice,kuo2003component,kuo2010randomly,kuo2006randomly,nichols2014fast,nuyens2006fast,sloan2002constructing,waterhouse2006randomly}, which has been successfully developed for serval random PDEs. In the case of bounded random variables, we find its application to elliptic equations \cite{kuo2012quasi}, eigenvalue problems \cite{gilbert2019analysis}, optimal control \cite{guth2021quasi,guth2024parabolic}, Helmholtz equations \cite{ganesh2021quasi,graham2025quasi} and Schr\"{o}dinger equations \cite{wu2024error}. For the case of unbounded random variables, the relevant literature is quite limited: \cite{graham2015quasi} pioneered the work for elliptic equations with Gaussian random coefficients. In recent years, there have also been many studies discussing the application of QMC to PDEs with random inputs that belong to the Gevrey class, which is a much more general class that goes beyond the affine parameterization we consider in \eqref{eq: random-potential parametric}; see, e.g., \cite{chernov2024analyticSINUM,chernov2024analyticCMA,harbrecht2024gevrey}. In particular, \cite{chernov2024analyticSINUM,chernov2024analyticCMA} mainly considered elliptic PDEs, and \cite{harbrecht2024gevrey} proposed an abstract framework for analyzing the parametric regularity of a solution to a random PDE. However, the framework in \cite{harbrecht2024gevrey} cannot be applied directly to our problem since the Gaussian random variables we consider are unbounded and thus the Fr\'{e}chet derivatives of the residual equation with respect to the solution are not uniformly bounded for all data as required by \cite{harbrecht2024gevrey}; see \cite{harbrecht2024gevrey} for more details. Here, we still focus on the affine parameterization \eqref{eq: random-potential parametric} of $V$ for simplicity, and we would adopt a different approach from those in \cite{chernov2024analyticSINUM,chernov2024analyticCMA,harbrecht2024gevrey} for analyzing the parametric regularity of the solution. But we believe that the theory in \cite{harbrecht2024gevrey} can also be applied to our problem after appropriate modifications. In fact, due to its unboundedness, the Gaussian random variable in \eqref{eq: Schrodinger} poses much greater challenges to approximations and analysis than the uniformly distributed random variable case considered in \cite{wu2024error}. Although integration against unbounded random variables can be mapped into an integral over the unit cube using the inverse cumulative distribution function, the transformed integrand may be unbounded and may not even have square-integrable mixed first derivatives. Therefore, the QMC theory of standard weighted Sobolev spaces \cite{dick2013high,kuo2011quasi} cannot be applied.
	Instead, it is crucial to find the proper decaying weight functions that can counteract the growth of the mixed first derivatives of the integrand and meanwhile lead to the desired convergence rate \cite{kuo2010randomly,nichols2014fast}. In general, slower decay of weight functions (or equivalently slower growth of mixed first derivatives) results in faster convergence of QMC.
	
	To this end, we first adopt the time-splitting scheme which is one of the most popular classes of methods \cite{bao2012mathematical,jin2011mathematical,lubich2008splitting} for time discretization of Schr\"odinger models. The resulting subflows are self-adjoint, which yields a polynomially-growing bound on the mixed first derivatives of the semi-discrete solution with respect to $\bxi$. Such growth bounds lead to favorable weight functions. Then, by means of a non-standard weighted Sobolev space associated with the chosen weight functions, we derive the QMC quadrature rule that ends up as a class of QMC time-splitting schemes. Rigorous error estimates establish the desired dimension-independent and almost first-order convergence rate of QMC with the optimal temporal error bound. The theoretical results are validated by numerical experiments.
	
	The rest of the paper is organized as follows. In Section \ref{sec: main result}, we present the QMC time-splitting (QMC-TS) scheme and the main result on its convergence. The derivation of the main result (i.e., the convergence analysis) is given in detail in Section \ref{sec: convergence}. Then, we show numerical results to verify the convergence rates of our numerical method in Section \ref{sec: numerical example}. Finally, some concluding remarks are given in Section \ref{sec:Conclusion}.
	
	\paragraph{\textbf{Notation}} We will usually omit the variables $t, \bxi, x$ in the functions for notational brevity when there is no confusion caused. For any $1 \le q < \infty$ and any temporal-spatial function space $W$, we define the space $L_{\bmu_{\infty}}^q(U, W)$ equipped with the norm $\| \cdot \|_{L_{\bmu_{\infty}}^q(U, W)}$ such that for any $f(t, \bxi, x) \in L_{\bmu_{\infty}}^q(U, W)$, $f(\cdot, \bxi, \cdot) \in W$ for almost surely (a.s.) $\bxi \in U$ and $\| f \|_{L_{\bmu_{\infty}}^q(U, W)} := \left( \int_U \left( \| f(\bxi) \|_W \right)^q \rd \bmu_{\infty}(\bxi) \right)^{1/q} < \infty$.
	
	\section{Numerical method and main result}
	\label{sec: main result}
	
	We present the quasi-Monte Carlo time-splitting (QMC-TS) scheme to solve the Schr\"{o}dinger equation \eqref{eq: Schrodinger}.
	
	\subsection{Numerical method}
	\subsubsection{Dimension truncation}
	
	From the perspective of numerical computations, we will in practice work with the following truncated Schr\"{o}dinger equation:
	\begin{equation} \label{eq: Schrodinger trun}
		\left\{
		\begin{aligned}
			&\ri\partial_t\psi_m(t,\bxi_m,x) = -\frac{1}{2} \partial_x^2 \psi_m(t,\bxi_m,x) + V_m(\bxi_m,x) \psi_m(t,\bxi_m,x),
			\quad x\in \bT, \bxi_m \in \bR^m, t>0,\\
			&\psi_m(t=0,\bxi_m,x) = \psi_{\mathrm{in}}(x),\quad x\in \bT, \bxi_m \in \bR^m,
		\end{aligned}
		\right.
	\end{equation}
	where
	\begin{align} \label{eq: random-potential}
		V_m(\bxi_m, x) = v_0(x) + V_{\mathrm{r}, m}(\bxi_m, x), \text{ with } V_{\mathrm{r}, m}(\bxi_m, x) = \sum_{j=1}^{m} \lambda_j \xi_j(\omega) v_j(x),
	\end{align}
	and $\bxi_m = (\xi_1, \ldots, \xi_m)^{\top} \in \bR^m$. Note that any function of $\bxi_m \in \bR^m$ can also be seen as a function of $\bxi \in U$. The first step of QMC-TS is to approximate $\psi(t, \bxi, x)$ by $\psi_m(t, \bxi_m, x)$.
	
	\subsubsection{Time discretization}
	
	For each fixed $\bxi \in U$, \eqref{eq: Schrodinger trun} becomes a deterministic equation, and the second step of QMC-TS lies in the time discretization for \eqref{eq: Schrodinger trun} using the time-splitting scheme. It begins by splitting \eqref{eq: Schrodinger trun} into two subflows $\Psi^\mathrm{p}_{\rho}$ and $\Psi^\mathrm{k}_{\rho}$ as
	\begin{subequations} \label{eq: subflow}
		\begin{align}
			\Psi^\mathrm{p}_{\rho}: i\partial_t\psi_m &= V_m\psi_m, \quad t\in(0,\rho], \\
			\Psi^\mathrm{k}_{\rho}: i\partial_t\psi_m &= -\frac{1}{2} \partial_x^2 \psi_m,\quad t\in(0,\rho].
		\end{align}
	\end{subequations}
	Note that the above two equations can be integrated exactly in time since $V_m$ is real-valued. Let $\tau > 0$ be the time step size. In view of the Lie--Trotter product formula \cite{trotter1959product}, we employ the following first-order Lie--Trotter splitting method:
	\begin{align} \label{eq: Lie splitting}
		\psi_m^{n + 1} = \Psi^\mathrm{k}_{\tau} \circ \Psi^\mathrm{p}_{\tau}(\psi_m^{n}) = \re^{\ri \tau \partial_x^2 / 2} \re^{- \ri \tau V_m} \psi_m^n, \quad n = 0, 1, \ldots,
	\end{align}
	where $\psi_m^n$ is the approximation of $\psi_m(t_n)$, with $t_n = n \tau$ and $\psi_m^0 = \psi_{\mathrm{in}}$.
	The second step of QMC-TS is to approximate $\psi_m(t_n, \bxi_m, x)$ by $\psi_m^n(\bxi_m, x)$.
	
	\begin{remark} \label{rem: intermediate solution}
		Let $\psi_m^{n, *} = \Psi^\mathrm{p}_{\tau}(\psi_m^{n - 1}) = \re^{- \ri \tau V_m} \psi_m^{n - 1}$ for $n \ge 1$. Then, $\psi_m^{n} = \Psi^\mathrm{k}_{\tau}(\psi_m^{n, *}) = \re^{\ri \tau \partial_x^2 / 2} \psi_m^{n, *}$, and $\psi_m^{n, *} = g_n(\tau)$, where $g_n$ satisfies $g_n(0) = \psi_m^{n - 1}$ and
		\begin{align} \label{eq: intermediate solution}
			\ri \partial_t g_n = V_m g_n, \quad t \in (0, \tau].
		\end{align}
	\end{remark}
	
	\begin{remark}
		We can also use other high-order splitting schemes \cite{thalhammer2008high} of the general form
		\begin{align} \label{eq: high-order splitting}
			\psi_m^{n + 1} = \prod_{j = 1}^M \re^{\ri \alpha_j \tau \partial_x^2 / 2} \re^{- \ri \beta_j \tau V_m} \psi_m^{n}, \quad n = 0, 1, \ldots,
		\end{align}
		where $\alpha_j, \beta_j \in \bR$. We devote the analysis to the Lie--Trotter splitting scheme \eqref{eq: Lie splitting} for simplicity of presentation, and we will elaborate more on high-order splitting schemes in Section \ref{subsec: discussion}.
	\end{remark}
	
	\subsubsection{Quasi-Monte Carlo quadrature}

Note that $|\psi|^2$ represents the probability density function (or position density) in quantum mechanics. Many widely considered physical observables are linear functional of $|\psi|^2$. Therefore, in the following, we shall focus on an efficient scheme to approximate the expected value of $G(|\psi|^2)$, where $G$ is a linear functional.

We approximate $\E[G(|\psi(t_n)|^2)]$ by $\E[G(|\psi_m^n|^2)]$ with the above two steps. Then, we let $\phi(y) = \exp(- y^2 / 2) / \sqrt{2 \pi}$ be the density function of the standard univariate Gaussian distribution, $\Phi(y) = \int_{-\infty}^y \phi(\rho) \rd \rho$ be the cumulative distribution function, and $\Phi^{-1}$ be the inverse of $\Phi$. Moreover, we define the vector inverse Gaussian cumulative distribution function $\bPhi_m^{-1}$ such that $\bPhi_m^{-1}(\by)$ applies $\Phi^{-1}$ to $\by \in \bR^m$ component-wise. Then, by change of variables $\bxi_m = \bPhi_m^{-1}(\by)$, we have
	\begin{align} \label{eq: dimension truncation of expectation}
		\E[G(|\psi_{m}^n|^2)] = \int_{(0, 1)^m} F(\bPhi_m^{-1}(\by)) \rd \by,
	\end{align}
	where $F(\cdot) = G(|\psi_{m}^n(\cdot)|^2)$. The last step of QMC-TS is to approximate \eqref{eq: dimension truncation of expectation} by the QMC quadrature
	\begin{align} \label{eq: QMC for F}
		Q_{m, N}(F; \bDelta) = \frac{1}{N} \sum_{j = 1}^N F \left(  \bxi_m^{(j)} \right),
	\end{align}
	where $\{ \bxi_m^{(j)} \}_{j = 1}^N$ are the quadrature points with $N$ the number of samples. In particular, we adopt the randomly shifted (rank-1) lattice rule \cite{kuo2010randomly,nichols2014fast}, which generates the quadrature points as
	\begin{align}
		\bxi_m^{(j)} = \bPhi_m^{-1} \left( \mathrm{frac} \left( \frac{j\bz}{N} + \bDelta \right) \right), \quad j = 1, \ldots, N,
	\end{align}
	where $\bDelta \in [0, 1]^m$ is a random shift uniformly distributed over $[0, 1]^m$, $\mathrm{frac}(\by)$ takes the fractional part of $\by \in \bR^m$ component-wise, and $\bz \in \bN^m$ is known as the generating vector. A specific generating vector $\bz$ for the particular PDE problem can be constructed efficiently by the component-by-component (CBC) algorithm \cite{nichols2014fast}. We will elaborate more on the generating vector $\bz$ in Section \ref{subsec: QMC error}.
	
	Combining the above three steps, the QMC-TS method is summarized in Algorithm \ref{algo: QMC-TS}.
	
	\begin{algorithm}[t!]
		\caption{Quasi-Monte Carlo time-splitting method}
		\label{algo: QMC-TS}
		\begin{algorithmic}[1]
			\Statex \textbf{Input}: truncation dimension $m$, time step size $\tau$, number of samples $N$, number of random shifts $R$.
			\State Construct the generating vector $\bz$ by the CBC algorithm.
			\State Generate i.i.d. random shifts $\bDelta_1, \ldots, \bDelta_R$ from the uniform distribution on $[0, 1]^m$. For each $k = 1, \ldots, R$, obtain the sample set $\{ \bxi_m^{(k, j)} = \bPhi_m^{-1} \left( \mathrm{frac} \left( \frac{j\bz}{N} + \bDelta_k \right) \right): j = 1, \ldots, N \}$.
			\For {$k = 1:R$}
			\For {$j = 1:N$}
			\State Approximate the Schr\"{o}dinger equation \eqref{eq: Schrodinger trun} via Lie--Trotter splitting \eqref{eq: Lie splitting} or other high-order splitting schemes for each $\bxi_m^{(k, j)}$ and $n \in \bN$, and obtain $\psi_{m}^n(\bxi_m^{(k, j)})$.
			\EndFor
			\EndFor
			\State The approximation of the expectation of the physical observable $\E[G(|\psi(t_n)|^2)]$ is given by
			\begin{align}
				\overline{Q}_{m, N, R}(G(|\psi_{m}^n|^2)) = \frac{1}{R} \sum_{k = 1}^R Q_{m, N}(G(|\psi_m^n|^2); \bDelta_k) = \frac{1}{R N} \sum_{k = 1}^R \sum_{j = 1}^N G(| \psi_{m}^n(\bxi_m^{(k, j)}) |^2).
			\end{align}
			\Statex \textbf{Output}: $\overline{Q}_{m, N, R}(G(|\psi_{m}^n|^2))$.
		\end{algorithmic}
	\end{algorithm}
	
	\subsection{Main result}
	\label{subsec: main result}
	
	Let $s \ge 1$ be fixed. We make the following assumptions to guarantee the convergence of the QMC-TS method.
	
	\begin{assumption}\label{assp: general}
		Assume that $\psi_{\mathrm{in}}, v_0 \in H^s(\bT)$, $v_j \in H^s(\bT) \bigcap W^{1, \infty}(\bT)$ for $j \in \bN^{+}$, and the linear functional $ G \in (H^{1}(\bT))' $, which is the dual space of $H^{1}(\bT)$.
	\end{assumption}
	
	\begin{assumption} \label{assp: summability of b}
		Let $a_{j} = \lambda_j \| v_j \|_{H^s(\bT)}$ and $b_j = \lambda_j \| v_j \|_{W^{1, \infty}(\bT)}$ for $j \in \bN^{+}$. Assume that
		\begin{align*}
			\sum_{j = 1}^{\infty} a_j < \infty, \text{ and } \sum_{j = 1}^{\infty} b_{j}^p < \infty, \text{ for some } p \in (0, 1].
		\end{align*}
	\end{assumption}
	
	\begin{assumption} \label{assp: convergence of V}
		Assume that for some constants $C, \varepsilon, \chi>0$ independent of $m$
		\begin{align*}
			\| V_m - V \|_{L_{\bmu_{\infty}}^{2 + \varepsilon}(U, H^1(\bT))}\leq Cm^{-\chi}.
		\end{align*}
	\end{assumption}
	
	\begin{remark}
		If $v_j \in H^s(\bT)$ with $s > \frac{3}{2}$, we immediately have $v_j \in W^{1, \infty}(\bT)$ by Sobolev embedding \cite[Chapter 4]{adams2021sobolev}.
	\end{remark}
	
	Now we are ready to present the main result of this paper.
	
	\begin{theorem}
		\label{thm: main}
		Let Assumptions \ref{assp: general}--\ref{assp: convergence of V} hold with $s \ge 3$, and we additionally assume \eqref{eq: additional assumption} if Assumption \ref{assp: summability of b} holds with $p = 1$. If the Lie--Trotter splitting \eqref{eq: Lie splitting} is used in the QMC-TS method, then there exists a randomly shifted lattice rule \eqref{eq: QMC for F} that can be constructed by the CBC algorithm for any fixed $T$ such that the numerical solution $\psi_{m}^n$ given by the QMC-TS method for $N \le 10^{30}$ satisfies the following error estimate:
		\begin{align}\label{eq: main estimate}
			\sqrt{\mathbb{E}^{\bDelta}\left[\Big|\E[G(|\psi(t_n)|^2)] - Q_{m,N}(G(|\psi_{m}^n|^2); \bDelta)\Big|^2
				\right]} \leq C
			(m^{-\chi} + \tau + N^{- \kappa}),
		\end{align}
		for all $t_n = n \tau \in [0, T]$ and some constant $C > 0$ independent of $m, \tau, N$, where $\mathbb{E}^{\bDelta}$ denotes the expectation with respect to the random shift $\bDelta$, and $\kappa = 1/p - 1/2$ for $p \in (2/3, 1]$ and $\kappa = 1 - \delta$ for $p \in (0, 2/3]$ with $\delta > 0$ arbitrarily small.
	\end{theorem}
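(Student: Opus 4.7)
I would split the target error by the triangle inequality through two deterministic intermediate quantities: $\E[G(|\psi_m(t_n)|^2)]$ (after dimension truncation of the potential) and $\E[G(|\psi_m^n|^2)]$ (after time-splitting discretization). Thus
\begin{align*}
\big|\E[G(|\psi(t_n)|^2)] - Q_{m,N}(G(|\psi_{m}^n|^2); \bDelta)\big|
&\le \big|\E[G(|\psi(t_n)|^2)] - \E[G(|\psi_m(t_n)|^2)]\big| \\
&\quad + \big|\E[G(|\psi_m(t_n)|^2)] - \E[G(|\psi_m^n|^2)]\big| \\
&\quad + \big|\E[G(|\psi_m^n|^2)] - Q_{m,N}(G(|\psi_{m}^n|^2); \bDelta)\big|.
\end{align*}
Only the last term depends on the random shift $\bDelta$; the first two can be pulled outside $\mathbb{E}^{\bDelta}$. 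Taking $L^2_{\bDelta}$-norms and applying the triangle inequality again then reduces the task to bounding three contributions separately, which I expect to produce the three terms $m^{-\chi}$, $\tau$, and $N^{-\kappa}$ in \eqref{eq: main estimate}.

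For the dimension truncation error I would use linearity of $G$ together with $\big| |\psi|^2 - |\psi_m|^2 \big| \lesssim (|\psi|+|\psi_m|)|\psi-\psi_m|$, bound $G\in (H^1(\bT))'$ by $\|\psi - \psi_m\|_{H^1}$ after applying H\"older-type estimates in $\bxi$, and then invoke a Duhamel/energy estimate for \eqref{eq: Schrodinger} minus \eqref{eq: Schrodinger trun} to control $\|\psi - \psi_m\|_{H^1}$ by $\|V - V_m\|_{H^1}$ times a polynomial factor in $\|\psi\|_{H^s}$ and $\|\psi_m\|_{H^s}$; Assumption~\ref{assp: convergence of V} (combined with a.s.\ $H^s$-regularity of the wavefunction under Assumptions~\ref{assp: general}--\ref{assp: summability of b}) then yields the $m^{-\chi}$ bound after an application of H\"older's inequality in $\bxi$ using the $2+\varepsilon$ integrability. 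For the splitting error I would proceed pointwise in $\bxi$: standard local truncation analysis for Lie--Trotter splitting applied to \eqref{eq: Schrodinger trun} yields a one-step error of order $\tau^2$ controlled by commutator-type quantities involving $\partial_x^2$ and $V_m$, and a discrete Gronwall argument then gives an $O(\tau)$ global bound in $L^2$, which integrates against $G$ through the $H^1$-duality once one has a uniform-in-$\bxi_m$ control of $\|\psi_m^n\|_{H^s}$ with $s\ge 3$ (needed so the commutator involving $\partial_x V_m$ makes sense and is integrable against the Gaussian measure).

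The hard part is the QMC quadrature error, which I would address via the weighted-space machinery of randomly shifted lattice rules for Gaussian measures from \cite{kuo2010randomly,nichols2014fast}. The key input is a bound on the mixed first derivatives $|\partial^{\bm\nu} F(\bxi_m)|$ for $\bm\nu\in\{0,1\}^m$, which requires differentiating $\psi_m^n$ in $\bxi_m$. Because each subflow $\Psi^{\mathrm p}_\tau$ and $\Psi^{\mathrm k}_\tau$ is unitary and the $\bxi_m$-dependence enters only through the real potential $V_m$, differentiating $\re^{-\ri\tau V_m}$ brings down factors of $-\ri\tau\partial_{\xi_j}V_m = -\ri\tau\lambda_j v_j$, and iterating the product rule across the $n$ time steps produces a sum of $O(n^{|\bm\nu|})$ products of $v_j$'s that can be bounded by the quantities $b_j$ from Assumption~\ref{assp: summability of b}. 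This gives a polynomially-growing-in-$\bxi_m$ bound on the mixed derivatives (after using $H^1$-duality with $G$), which is precisely the regularity encoded in the non-standard weighted Sobolev space of the paper. I would then choose weights $\gamma_{\mathfrak u}$ that minimize the product-and-order form of the shift-averaged worst-case error, invoke the CBC existence theorem of \cite{nichols2014fast} to produce $\bz\in\bN^m$ satisfying the stated QMC error bound, and optimize the Lagrange exponent to obtain $\kappa = 1/p-1/2$ when $p\in(2/3,1]$ and $\kappa = 1-\delta$ when $p\in(0,2/3]$. The main technical obstacle is verifying that the weights so chosen yield a bound independent of $m$; this is where the summability of $b_j^p$ in Assumption~\ref{assp: summability of b} and the extra assumption \eqref{eq: additional assumption} in the $p=1$ case are essential, and where the unbounded tails of the Gaussian measure must be controlled by the decay of the weight functions against the polynomial growth of the derivatives. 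Combining the three estimates yields \eqref{eq: main estimate}.
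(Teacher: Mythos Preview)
Your proposal is correct and follows essentially the same approach as the paper: the identical three-term triangle-inequality decomposition into dimension-truncation, time-splitting, and QMC quadrature errors, handled respectively by Duhamel/Gronwall estimates (the paper's Lemmas~\ref{lem: dimension truncation error}--\ref{lem: dimension truncation of physical observable} and \ref{lem: temporal error of solution}--\ref{lem: temporal error of functional}) and by the mixed-derivative bound plus weighted-Sobolev-space/CBC machinery (Lemmas~\ref{lem: parametric regularity of semidiscrete in H1}--\ref{lem: QMC quadrature error}). The only slip is that the global splitting error must be controlled in $H^1$ (the paper works in $H^{s-2}$ with $s\ge 3$), not merely $L^2$, so that the algebra property yields $\||\psi_m^n|^2-|\psi_m(t_n)|^2\|_{H^1}\lesssim(\|\psi_m^n\|_{H^1}+\|\psi_m(t_n)\|_{H^1})\|\psi_m^n-\psi_m(t_n)\|_{H^1}$ and pairs with $G\in(H^1(\bT))'$; your own remarks about $H^1$-duality and the role of $s\ge 3$ show you already anticipate this.
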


    \begin{remark}
        For a general numerical method, we need the condition $\psi_{\mathrm{in}}, v_0, v_j \in H^s(\bT)$ in Assumption \ref{assp: general} with $s$ greater than some certain value (which is usually greater than $1$) to obtain the temporal convergence, and the condition $\sum_{j = 1}^{\infty} a_j < \infty$ in Assumption \ref{assp: summability of b} to ensure that the temporal convergence is independent of the truncation dimension $m$. In particular, we need $s \ge 3$ for first-order temporal convergence of Lie--Trotter splitting. Moreover, high-order derivatives of the physical basis functions $v_j$ are usually more oscillatory than low-order ones in general. Therefore, the condition $\sum_{j = 1}^{\infty} a_j < \infty$ usually implies $\sum_{j = 1}^{\infty} b_{j}^p < \infty$ with a small $p$, and hence would lead to a QMC convergence rate close to $1$.
    \end{remark}

    \begin{remark} \label{rem: high-space-dimensional cases}
        Recall that we are focusing on the one-space-dimensional case. Stronger assumptions would be needed for the above theory to hold in high-space-dimensional cases; see Point (2) in Section \ref{subsec: discussion}.
    \end{remark}
	
	\section{Convergence analysis}
	\label{sec: convergence}
	
	\subsection{Preliminaries}
	
	We need some preliminary results for the proof of Theorem \ref{thm: main}. We will frequently use the algebraic property of $H^r(\bT)$ for $r > 1/2$ \cite[Chapter 4]{adams2021sobolev}, which reads that for any $f, g \in H^r(\bT)$
	\begin{align} \label{eq: algebraic property}
		\| fg \|_{H^r(\bT)} \le & C_{\mathrm{a}, r} \| f \|_{H^r(\bT)} \| g \|_{H^r(\bT)},
	\end{align}
	where the constant $C_{\mathrm{a}, r}$ depends on $r$, and the notation $C_{\mathrm{a}, r}$ is reserved for the constant in \eqref{eq: algebraic property} throughout this paper. Define the set $U_a := \{ \bxi \in \bR^\bN: \sum_{j = 1}^{\infty} a_{j} |\xi_j| < \infty \}$. Then, a minor modification of \cite[Lemma 2.28]{schwab2011sparse} gives the following lemma.
	\begin{lemma} \label{lem: measurability}
		Under Assumptions \ref{assp: general}--\ref{assp: summability of b}, we have $U_a \in \cB(\bR^\bN)$ and $\bmu_{\infty}(U_a) = 1$.
	\end{lemma}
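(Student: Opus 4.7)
The plan is to treat the two claims separately, measurability first and the full-measure statement second, both via elementary Gaussian moment computations combined with the summability $\sum_{j=1}^\infty a_j < \infty$ from Assumption \ref{assp: summability of b}.

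For measurability, I would introduce the partial-sum maps $S_M : \bR^\bN \to [0, \infty]$ given by $S_M(\bxi) = \sum_{j=1}^M a_j |\xi_j|$. Each $S_M$ depends only on the first $M$ coordinates and is continuous in those coordinates, hence $\cB(\bR^\bN)$-measurable as a composition of the (measurable) coordinate projections $\pi_j$ with a continuous function. Since the sequence $S_M(\bxi)$ is monotone nondecreasing in $M$, the limit $S(\bxi) := \sup_M S_M(\bxi) = \sum_{j=1}^\infty a_j |\xi_j|$ exists in $[0, \infty]$ and is $\cB(\bR^\bN)$-measurable. Therefore
\begin{align*}
U_a = \{ \bxi : S(\bxi) < \infty \} = \bigcup_{K=1}^\infty \{ \bxi : S(\bxi) \le K \} \in \cB(\bR^\bN).
\end{align*}

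For the full-measure statement, I would compute the expectation of the nonnegative random variable $S$ under $\bmu_\infty$. Since each $\xi_j \sim \cN(0,1)$ gives $\E[|\xi_j|] = \sqrt{2/\pi}$, Tonelli's theorem (applied to the nonnegative integrand $(j, \bxi) \mapsto a_j |\xi_j|$ on $\bN \times \bR^\bN$ with counting measure times $\bmu_\infty$) yields
\begin{align*}
\int_{\bR^\bN} S(\bxi) \, \rd \bmu_\infty(\bxi) = \sum_{j=1}^\infty a_j \int_{\bR^\bN} |\xi_j| \, \rd \bmu_\infty(\bxi) = \sqrt{\tfrac{2}{\pi}} \sum_{j=1}^\infty a_j < \infty,
\end{align*}
using $\sum_j a_j < \infty$ from Assumption \ref{assp: summability of b}. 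A nonnegative measurable function with finite integral is finite $\bmu_\infty$-a.e., so $\bmu_\infty(\{S < \infty\}) = \bmu_\infty(U_a) = 1$.

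I do not expect a genuine obstacle here: the argument is essentially an $L^1$ computation, and the only subtlety is to confirm that the $\sigma$-algebra $\cB(\bR^\bN)$ generated by cylinder sets is the same one against which the coordinate-wise continuous partial sums $S_M$ are measurable, which is immediate since each $\pi_j$ is cylindrical by construction. The proof would then conclude by invoking Assumption \ref{assp: summability of b} only in its weakest form (the convergence of $\sum a_j$, not the $p$-summability of the $b_j$).
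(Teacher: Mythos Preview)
Your argument is correct. The paper does not actually prove this lemma; it simply states that the result follows from a minor modification of \cite[Lemma 2.28]{schwab2011sparse}. Your self-contained proof supplies exactly the standard details one would expect behind that citation: measurability via monotone limits of cylinder-measurable partial sums, and full measure via the $L^1$ bound $\int S\,\rd\bmu_\infty = \sqrt{2/\pi}\sum_j a_j < \infty$ from Assumption~\ref{assp: summability of b}. Nothing more is needed.
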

	
	We also have the following lemma on the properties of the Gaussian random potentials.
	\begin{lemma} \label{lem: integrability of V}
		Let Assumptions \ref{assp: general}--\ref{assp: convergence of V} hold. Then, for any $q \ge 1$ and $m \in \bN^{+}$, we have $V_m \in L_{\bmu_{\infty}}^q(U, H^s(\bT))$, and $\| V_m \|_{L_{\bmu_{\infty}}^q(U, H^s(\bT))}$ can be bounded uniformly in $m$. Moreover, for $1 \le q \le 2 + \varepsilon$, we have $\lim \limits_{m \rightarrow \infty} \| V_m - V \|_{L_{\bmu_{\infty}}^q(U, H^1(\bT))} = 0$ and $V \in L_{\bmu_{\infty}}^q(U, H^1(\bT))$.
	\end{lemma}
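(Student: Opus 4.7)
\textbf{Proof plan for Lemma~\ref{lem: integrability of V}.}

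The plan is to prove the two claims essentially by combining Minkowski's inequality in the Bochner space with the finite moments of standard Gaussians and the summability assumption on $a_j$. First I would handle measurability: for each $m$, $V_m(\bxi, \cdot) = v_0 + \sum_{j=1}^{m} \lambda_j \xi_j v_j$ depends only on finitely many coordinates of $\bxi$, is a continuous (in fact linear) $H^s(\bT)$-valued map of $\bxi_m \in \bR^m$, and is therefore strongly measurable from $(U, \cB(\bR^\bN))$ to $H^s(\bT)$. Then, applying Minkowski's integral inequality in $L_{\bmu_\infty}^q(U, H^s(\bT))$ and using that the $\xi_j$ are i.i.d. standard Gaussian,
\begin{align*}
\| V_m \|_{L_{\bmu_\infty}^q(U, H^s(\bT))} \le \| v_0 \|_{H^s(\bT)} + \sum_{j=1}^{m} \lambda_j \| v_j \|_{H^s(\bT)} \, \| \xi_j \|_{L_{\bmu_\infty}^q(U)} = \| v_0 \|_{H^s(\bT)} + c_q \sum_{j=1}^{m} a_j,
\end{align*}
where $c_q := (\int_\bR |y|^q \phi(y) \rd y)^{1/q} < \infty$. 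The bound $\sum_{j=1}^\infty a_j < \infty$ from Assumption~\ref{assp: summability of b} makes the right-hand side uniformly bounded in $m$, which proves the first claim.

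Next, for $1 \le q \le 2 + \varepsilon$, since $\bmu_\infty$ is a probability measure, Hölder's inequality applied pointwise in $x$ after composing with the $H^1(\bT)$-norm gives the monotonicity
\begin{align*}
\| V_m - V \|_{L_{\bmu_\infty}^q(U, H^1(\bT))} \le \| V_m - V \|_{L_{\bmu_\infty}^{2 + \varepsilon}(U, H^1(\bT))} \le C m^{-\chi},
\end{align*}
where the last inequality is Assumption~\ref{assp: convergence of V}. This yields $\lim_{m \to \infty} \| V_m - V \|_{L_{\bmu_\infty}^q(U, H^1(\bT))} = 0$. Finally, the triangle inequality
\begin{align*}
\| V \|_{L_{\bmu_\infty}^q(U, H^1(\bT))} \le \| V - V_m \|_{L_{\bmu_\infty}^q(U, H^1(\bT))} + \| V_m \|_{L_{\bmu_\infty}^q(U, H^1(\bT))},
\end{align*}
combined with the continuous embedding $H^s(\bT) \hookrightarrow H^1(\bT)$ (since $s \ge 1$) and the uniform bound on $V_m$ established above, shows that $V \in L_{\bmu_\infty}^q(U, H^1(\bT))$.

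I do not expect any substantive obstacle: the only mildly delicate point is making sure that $V$ itself is well-defined as an element of $H^1(\bT)$ for almost every $\bxi$, which is needed before one can speak of $\| V - V_m \|_{H^1(\bT)}$. This is handled by Lemma~\ref{lem: measurability}: on the full-measure set $U_a$, the partial sums $V_m(\bxi,\cdot)$ form a Cauchy sequence in $H^s(\bT) \hookrightarrow H^1(\bT)$ because $\| V_{m'} - V_m \|_{H^s(\bT)} \le \sum_{j=m+1}^{m'} a_j |\xi_j| \to 0$, so the limit $V(\bxi,\cdot)$ exists and agrees with the Karhunen--Lo\`{e}ve sum; outside $U_a$ we set $V$ to zero. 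With this in place, the rest of the argument is routine.
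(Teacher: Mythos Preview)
Your proposal is correct and follows essentially the same route as the paper: Minkowski's inequality plus the finiteness of Gaussian moments and $\sum_j a_j < \infty$ for the uniform $L^q$-bound on $V_m$, and then Assumption~\ref{assp: convergence of V} (with monotonicity of $L^q$-norms on a probability space) together with Lemma~\ref{lem: measurability} for the convergence and membership of $V$. The paper's proof is slightly terser but identical in substance.
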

	
	\begin{proof}
		Fix any $m \in \bN^{+}$. It is easy to see that $V_m(\bxi, \cdot) \in H^s(\bT)$ for any $\bxi \in U_a$, and hence $V_m(\bxi, \cdot) \in H^s(\bT)$ a.s. in $U$ by Lemma \ref{lem: measurability}. Then, for any $q \ge 1$, we have
		\begin{align*}
			\| V_m \|_{L_{\bmu_{\infty}}^q(U, H^s(\bT))}\le  \| v_0 \|_{H^s(\bT)} + \left( \int_{\bR} |\rho|^q \phi(\rho) \rd \rho \right)^{\frac{1}{q}} \sum_{j = 1}^{\infty} a_{j},
		\end{align*}
		where the upper bound on the right-hand side is finite due to Assumption \ref{assp: summability of b} and is independent of $m$. Hence, $V_m \in L_{\bmu_{\infty}}^q(U, H^s(\bT))$.
		
		On the other hand, for any $\bxi \in U_a$, we have $V_m(\bxi, \cdot) \rightarrow V(\bxi, \cdot)$ in $H^1(\bT)$ as $m \rightarrow \infty$, and hence $V(\bxi, \cdot) \in H^1(\bT)$. By Lemma \ref{lem: measurability}, we have $V(\bxi, \cdot) \in H^1(\bT)$ a.s. in $U$. Moreover, for any $1 \le q \le 2 + \varepsilon$, Assumption \ref{assp: convergence of V} gives $\lim \limits_{m \rightarrow \infty} \| V_m - V \|_{L_{\bmu_{\infty}}^q(U, H^1(\bT))} = 0$, and hence $V \in L_{\bmu_{\infty}}^q(U, H^1(\bT))$.
	\end{proof}
	
	In view of Lemmas \ref{lem: measurability}--\ref{lem: integrability of V}, we can define the solution to \eqref{eq: Schrodinger} for a.s. $\bxi \in U$ by the Duhamel's formula \cite{bers1964partial}, which reads
	\begin{align} \label{eq: solution by Duhamel's formula}
		\psi(t,\bxi, x) = \re^{\ri t \partial_x^2/2}\psi_{\mathrm{in}}(x) - \ri \int_0^t\re^{\ri (t-\rho) \partial_x^2/2}V(\bxi, x)\psi(\rho,\bxi, x) \rd \rho, \quad t\geq0.
	\end{align}
	We will also use the following results.
	
	\begin{lemma} \label{lem: integrability of exp V}
		Let Assumptions \ref{assp: general}--\ref{assp: convergence of V} hold. Then, we have $\exp(K \| V(\bxi) \|_{H^1(\bT)}) \in L_{\bmu_{\infty}}^q(U)$ for any $K > 0$ and $q \ge 1$.
	\end{lemma}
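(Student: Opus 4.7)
The plan is to exploit the linear Karhunen--Lo\`eve-type structure of $V$ together with independence of the $\xi_j$'s. First I would apply the triangle inequality to the series representation \eqref{eq: random-potential parametric}, combined with $\|v_j\|_{H^1(\bT)} \le \|v_j\|_{H^s(\bT)}$ (valid since $s \ge 1$), to obtain the almost-sure pointwise bound
\[
\| V(\bxi) \|_{H^1(\bT)} \le \| v_0 \|_{H^1(\bT)} + \sum_{j=1}^\infty a_j |\xi_j|
\]
for all $\bxi$ in the full-measure set $U_a$ (Lemma \ref{lem: measurability}). After pulling out the deterministic factor and absorbing the power $q$ into $K$, this reduces the claim to showing that $\E[\exp(K\sum_{j=1}^\infty a_j|\xi_j|)] < \infty$ for every $K>0$.

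Next, by independence of the $\xi_j$'s and the monotone convergence theorem, this expectation factors as the infinite product $\prod_{j=1}^\infty \E[\exp(Ka_j|\xi_j|)]$. For a standard Gaussian $\xi$ and any $c\ge 0$, a completion-of-the-square calculation gives the closed form $\E[\exp(c|\xi|)] = 2 e^{c^2/2} \Phi(c)$. Using that $\Phi$ is concave on $[0,\infty)$ (so $2\Phi(c) \le 1 + \sqrt{2/\pi}\,c$) and that $e^{c^2/2}$ is bounded on $[0,1]$, I would derive a uniform linear estimate of the form $\E[\exp(c|\xi|)] \le 1 + C c$ for $c \in [0,1]$, with $C$ absolute.

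Finally, since $\sum_j a_j < \infty$ by Assumption \ref{assp: summability of b}, the sequence $a_j \to 0$, so there is a $j_0$ with $Ka_j \le 1$ for all $j \ge j_0$. For these indices the small-$c$ bound yields $\log \E[\exp(Ka_j|\xi_j|)] \le CKa_j$, and summing gives $\sum_{j\ge j_0} \log \E[\exp(Ka_j|\xi_j|)] \le CK\sum_{j\ge j_0} a_j < \infty$, so the tail product converges. The finitely many factors for $j < j_0$ are each finite by Gaussian integrability. Combining these steps yields the desired bound for any $q\ge 1$ and $K>0$.

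The main (though minor) obstacle is securing the linear-in-$c$ bound on $\E[\exp(c|\xi|)]$ uniformly for small $c$; the naive estimate $\E[\exp(c|\xi|)] \le 2e^{c^2/2}$ would introduce a divergent $\sum \log 2$ in the logarithmic sum and hence fail. A slicker alternative route would be to invoke Fernique's theorem: by Lemma \ref{lem: integrability of V}, $V$ may be regarded as a Gaussian random variable taking values in $H^1(\bT)$, so $\E[\exp(\alpha \|V\|_{H^1(\bT)}^2)] < \infty$ for some $\alpha > 0$, and then the stronger quadratic integrability dominates any linear-exponential moment via $K\|V\|_{H^1(\bT)} \le \alpha \|V\|_{H^1(\bT)}^2 + K^2/(4\alpha)$.
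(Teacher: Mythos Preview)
Your direct product argument is correct. The paper, however, takes precisely the ``slicker alternative route'' you sketch at the end: it invokes Lemma~\ref{lem: integrability of V} to view $V_{\mathrm r}$ as an $H^1(\bT)$-valued centered Gaussian, applies Fernique's theorem to obtain $\int_U \exp(\beta\|V_{\mathrm r}(\bxi)\|_{H^1(\bT)}^2)\,\rd\bmu_\infty(\bxi)<\infty$ for some $\beta>0$, and then dominates the linear exponential via Young's inequality $qK\|V_{\mathrm r}\|_{H^1}\le \beta\|V_{\mathrm r}\|_{H^1}^2 + q^2K^2/(4\beta)$.

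Your main route is genuinely different and more elementary: it avoids Fernique entirely by exploiting the explicit KL structure and the summability $\sum_j a_j<\infty$ from Assumption~\ref{assp: summability of b}. In fact your argument is essentially the paper's proof of the \emph{next} lemma (Lemma~\ref{lem: integrability of exp V m}, for the truncated $V_m$) pushed to the limit via monotone convergence; the paper even records the identity $\int_\bR \exp(C|\xi|)\phi(\xi)\,\rd\xi = 2\exp(C^2/2)\Phi(C)$ and the bound $\Phi(C)\le \tfrac12\exp(2C/\sqrt{2\pi})$ there, which would have given you $\log\E[\exp(Ka_j|\xi_j|)]\le (Ka_j)^2/2 + 2Ka_j/\sqrt{2\pi}$ directly and spared the small-$c$ linearization step. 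The trade-off is that Fernique is a one-line black box that does not depend on any particular series representation of $V$, whereas your approach is self-contained but leans on the explicit $\ell^1$-summability of the $a_j$'s.
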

	
	\begin{proof}
		Lemma \ref{lem: integrability of V} indicates that $V_{\mathrm{r}}(\bxi, x)$ is an $H^1(\bT)$-valued centered Gaussian random variable, and $H^1(\bT)$ is a separable Hilbert space. Then, by Fernique's theorem \cite{da2014stochastic,fernique1975regularite} (see also \cite[Theorem 2.2]{charrier2012strong}), there exists a constant $\beta > 0$ such that $
		\int_U \exp \left( \beta \| V_{\mathrm{r}}(\bxi) \|_{H^1(\bT)}^2 \right) \rd \bmu_{\infty}(\bxi) < \infty$. Then, we have by Young's inequality that
		\begin{align*}
			& \int_{U} \exp(q K \| V(\bxi) \|_{H^1(\bT)}) \rd \bmu_{\infty}(\bxi) \\
            & \qquad \le \exp \left(q K \| v_0 \|_{H^1(\bT)} + q^2 K^2 / (4 \beta) \right) \int_U \exp \left( \beta  \| V_{\mathrm{r}}(\bxi) \|_{H^1(\bT)}^2 \right) \rd \bmu_{\infty}(\bxi) < \infty.
		\end{align*}
		Hence, for any $K > 0$ and $q \ge 1$, we have $\exp(K \| V(\bxi) \|_{H^1(\bT)}) \in L_{\bmu_{\infty}}^q(U)$.
	\end{proof}
	
	\begin{lemma} \label{lem: integrability of exp V m}
		Let Assumptions \ref{assp: general}--\ref{assp: summability of b} hold. Then, for any $K > 0$ and $q \ge 1$, it holds that $\exp(K \| V_m(\bxi) \|_{H^s(\bT)}) \in L_{\bmu_{\infty}}^q(U)$, and $\| \exp(K \| V_m(\bxi) \|_{H^s(\bT)}) \|_{L_{\bmu_{\infty}}^q(U)}$ can be bounded uniformly in $m$.
	\end{lemma}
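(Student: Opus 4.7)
The plan is to exploit that $V_m$ is a \emph{finite} sum of independent scalar Gaussians and argue directly coordinate-by-coordinate, rather than invoking Fernique's theorem as in Lemma~\ref{lem: integrability of exp V}, since Fernique's constant $\beta$ could a priori depend on $m$. The starting point is the deterministic triangle-inequality bound obtained from \eqref{eq: random-potential},
\begin{equation*}
\|V_m(\bxi)\|_{H^s(\bT)} \le \|v_0\|_{H^s(\bT)} + \sum_{j=1}^m a_j |\xi_j|,
\end{equation*}
with $a_j = \lambda_j \|v_j\|_{H^s(\bT)}$ exactly the coefficient appearing in Assumption~\ref{assp: summability of b}.

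Next I would exponentiate and use the mutual independence of $\{\xi_j\}$ under $\bmu_{\infty}$ to factorize the $q$-th moment as
\begin{equation*}
\int_U e^{q K \|V_m(\bxi)\|_{H^s(\bT)}}\, \rd\bmu_{\infty}(\bxi) \le e^{q K \|v_0\|_{H^s(\bT)}} \prod_{j=1}^m \int_{\bR} e^{q K a_j |y|}\, \phi(y)\, \rd y.
\end{equation*}
Each univariate factor is the moment generating function of $|\xi|$ with $\xi \sim \cN(0,1)$, which by a direct completion of the square equals $2\Phi(c)\,e^{c^2/2}$ for $c \ge 0$. Combined with the elementary bound $2\Phi(c) \le 1 + c\sqrt{2/\pi} \le e^{c\sqrt{2/\pi}}$ (coming from $\phi(s) \le \phi(0)$), this gives
\begin{equation*}
\int_{\bR} e^{c|y|}\, \phi(y)\, \rd y \le \exp\!\Bigl(\tfrac{c^2}{2} + c\sqrt{\tfrac{2}{\pi}}\Bigr), \qquad c \ge 0.
\end{equation*}
Applying this with $c = q K a_j$ turns the product into a sum in the exponent, leading to
\begin{equation*}
\bigl\| \exp(K\|V_m\|_{H^s(\bT)}) \bigr\|_{L_{\bmu_{\infty}}^q(U)}^q \le \exp\!\Bigl( q K \|v_0\|_{H^s(\bT)} + \tfrac{(qK)^2}{2}\sum_{j=1}^\infty a_j^2 + q K \sqrt{\tfrac{2}{\pi}} \sum_{j=1}^\infty a_j \Bigr),
\end{equation*}
and both tail sums are finite by Assumption~\ref{assp: summability of b} ($\sum_j a_j < \infty$ forces $a_j \to 0$, hence $\sum_j a_j^2 < \infty$ as well). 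The right-hand side is manifestly independent of $m$, which gives both claims.

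The only subtle point — and the main obstacle if one is careless — is the choice of the sharp one-dimensional bound. The crude inequality $e^{c|y|} \le e^{cy}+e^{-cy}$ yields the constant $2\,e^{c^2/2}$, whose $m$-fold product contains an unwanted factor $2^m$ that destroys any $m$-uniform estimate; to preserve $m$-uniformity one really needs the linearization $2\Phi(c) = 1 + O(c)$ as $c\to 0$, which is what makes the exponent depend on $\sum_j a_j$ (finite by Assumption~\ref{assp: summability of b}) rather than on $m\,\log 2$.
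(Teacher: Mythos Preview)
Your proof is correct and follows essentially the same argument as the paper: the same triangle-inequality bound on $\|V_m\|_{H^s}$, the same factorization over independent coordinates, and the same one-dimensional estimate $\int_{\bR} e^{c|y|}\phi(y)\,\rd y = 2\Phi(c)e^{c^2/2}$ together with $\Phi(c)\le \tfrac12 e^{2c/\sqrt{2\pi}}$ (your $2\Phi(c)\le e^{c\sqrt{2/\pi}}$ is the identical inequality), leading to the same $m$-independent exponent $qK\|v_0\|_{H^s}+\tfrac{(qK)^2}{2}\sum_j a_j^2 + qK\sqrt{2/\pi}\sum_j a_j$. Your closing remark on why the crude bound $e^{c|y|}\le e^{cy}+e^{-cy}$ would inject a fatal $2^m$ is a useful clarification not made explicit in the paper.
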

	
	\begin{proof}
		We have from the proof of \cite[Theorem 16]{graham2015quasi} that for any $C \ge 0$
		\begin{align} \label{eq: property of density function}
			\int_{\bR} \exp(C |\xi|) \phi(\xi) \rd \xi = 2 \exp \left( \frac{C^2}{2} \right) \Phi(C), \quad \Phi(C) \le \frac{1}{2} \exp \left( \frac{2 C}{\sqrt{2 \pi}} \right).
		\end{align}
		Then,
		\begin{align*}
			\int_U \exp(q K \| V_m(\bxi) \|_{H^s(\bT)}) \rd \bmu_{\infty}(\bxi) \le & \exp(q K \| v_0 \|_{H^s(\bT)}) \prod_{j = 1}^m \int_{\bR} \exp(q K a_{j} |\xi_j|) \phi(\xi_j) \rd \xi_j \\
			\le & \exp \left( q K \| v_0 \|_{H^s(\bT)}  + \frac{2 q K}{\sqrt{2 \pi}} \sum_{j = 1}^{\infty} a_{j} + \frac{q^2 K^2}{2} \sum_{j = 1}^{\infty} a_{j}^2 \right),
		\end{align*}
		where the upper bound on the right-hand side is finite due to Assumption \ref{assp: summability of b} and is independent of $m$. Hence, we have $\exp(K \| V_m(\bxi) \|_{H^s(\bT)}) \in L_{\bmu_{\infty}}^q(U)$.
	\end{proof}
	
	\subsection{Dimension truncation error}
    \label{subsec: dimension truncation error}
	
	We first need the regularity of the solution in the physical domain.
	
	\begin{lemma} \label{lem: well-posedness of psi}
		Let Assumptions \ref{assp: general}--\ref{assp: convergence of V} hold.
        Then, for any $T > 0$ and $\bxi \in U_a$, we have
		\begin{align} \label{eq: well-posedness of psi}
			\|\psi(t,\bxi)\|_{H^1(\bT)} \le \|\psi_{\mathrm{in}}\|_{H^1(\bT)} \exp \left( C_{\mathrm{a}, 1} T \|V(\bxi)\|_{H^1(\bT)} \right), \quad 0 \le t \le T.
		\end{align}
        Moreover, for any $1 \le q < \infty$, we have $\psi \in L_{\bmu_{\infty}}^q(U, L^{\infty}((0, T), H^1(\bT)))$.
	\end{lemma}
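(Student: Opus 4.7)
The plan is to derive the pointwise-in-$\bxi$ bound \eqref{eq: well-posedness of psi} directly from the Duhamel formula \eqref{eq: solution by Duhamel's formula} by Gr\"onwall's inequality, and then convert it into the $L_{\bmu_{\infty}}^q$ integrability statement using Lemma \ref{lem: integrability of exp V}.

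First, fix $\bxi \in U_a$, so by Lemma \ref{lem: integrability of V} (combined with Lemma \ref{lem: measurability}) we have $V(\bxi, \cdot) \in H^1(\bT)$. Applying the $H^1(\bT)$-norm to \eqref{eq: solution by Duhamel's formula} and using that the free Schr\"odinger propagator $\mathrm{e}^{\ri t \partial_x^2/2}$ is an isometry on $H^1(\bT)$ (since it commutes with $\partial_x^2$ and with the Fourier multipliers defining the $H^1$-norm on the torus), I obtain
\begin{align*}
\|\psi(t,\bxi)\|_{H^1(\bT)} \le \|\psi_{\mathrm{in}}\|_{H^1(\bT)} + \int_0^t \|V(\bxi)\psi(\rho,\bxi)\|_{H^1(\bT)} \, \rd \rho.
\end{align*}
Since $H^1(\bT)$ is an algebra (the case $r = 1 > 1/2$ of \eqref{eq: algebraic property}), the integrand is controlled by $C \|V(\bxi)\|_{H^1(\bT)} \|\psi(\rho,\bxi)\|_{H^1(\bT)}$. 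Gr\"onwall's inequality then yields \eqref{eq: well-posedness of psi} on $[0,T]$.

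Next, to deduce $\psi \in L_{\bmu_{\infty}}^q(U, L^{\infty}((0,T), H^1(\bT)))$ for any $1 \le q < \infty$, I take the $L^{\infty}_t$ norm of \eqref{eq: well-posedness of psi} on $[0, T]$ and raise both sides to the $q$-th power:
\begin{align*}
\|\psi(\cdot,\bxi)\|_{L^{\infty}((0,T), H^1(\bT))}^q \le \|\psi_{\mathrm{in}}\|_{H^1(\bT)}^q \exp\bigl(q C T \|V(\bxi)\|_{H^1(\bT)}\bigr).
\end{align*}
Integrating over $\bxi \in U_a$ with respect to $\bmu_{\infty}$ and invoking Lemma \ref{lem: integrability of exp V} with $K = qCT$, the right-hand side is finite, which gives the desired conclusion.

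The only subtle point is the justification that \eqref{eq: solution by Duhamel's formula} defines an $H^1(\bT)$-valued function on $[0,T]$ to which the above estimate can be applied; this is standard local well-posedness for the linear Schr\"odinger equation with an $H^1(\bT)$ multiplicative potential combined with the a priori bound above (which prevents blow-up in $H^1$ and hence yields global existence for each fixed $\bxi \in U_a$). I would include this as a brief remark rather than a separate argument, since it follows from the contraction-mapping construction of the mild solution in $C([0,T], H^1(\bT))$ applied to the integral equation \eqref{eq: solution by Duhamel's formula}, exactly as in the deterministic theory. No step here requires information about the parametric dependence of $V$, since the bound \eqref{eq: well-posedness of psi} is $\bxi$-wise, with the randomness entering only through the integrability of $\exp(K \|V(\bxi)\|_{H^1(\bT)})$ provided by Lemma \ref{lem: integrability of exp V}.
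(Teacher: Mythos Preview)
Your proposal is correct and follows essentially the same approach as the paper: both take the $H^1$-norm of the Duhamel formula \eqref{eq: solution by Duhamel's formula}, use the isometry of the free propagator together with the algebra property \eqref{eq: algebraic property}, apply Gr\"onwall to obtain \eqref{eq: well-posedness of psi}, and then invoke Lemma \ref{lem: integrability of exp V} for the $L^q_{\bmu_\infty}$ integrability. The only cosmetic difference is that the paper phrases the existence step as a ``bootstrap-type argument'' (citing \cite{tao2006nonlinear} and \cite[Appendix A]{wu2024error}) whereas you appeal to a contraction-mapping construction; both are standard and interchangeable here.
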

	
	\begin{proof}
		Fix any $T > 0$ and $\bxi \in U_a$. By the algebraic property \eqref{eq: algebraic property} of $H^1(\bT)$ and the fact that $\re^{\ri t \partial_x^2 / 2}$ is an isometry on $H^1(\bT)$ for all $t \in \bR$, we take the $H^1$-norm on both sides of \eqref{eq: solution by Duhamel's formula} and obtain
		\begin{align} \label{eq: Hs norm of Duhamel's formula}
			\|\psi(t,\bxi)\|_{H^1(\bT)}\leq\|\psi_{\mathrm{in}}\|_{H^1(\bT)} + C_{\mathrm{a}, 1} \|V(\bxi)\|_{H^1(\bT)} \int_0^t
			\|\psi(\rho,\bxi)\|_{H^1(\bT)} \rd \rho.
		\end{align}
		Then, a bootstrap-type argument \cite{tao2006nonlinear} will give the local well-posedness of \eqref{eq: Schrodinger} in $H^1(\bT)$ for $\bxi$ (see also \cite[Appendix A]{wu2024error} for details of the bootstrap-type argument). Moreover, by Gronwall's inequality, we can deduce \eqref{eq: well-posedness of psi} from \eqref{eq: Hs norm of Duhamel's formula}.
		In addition, by Lemma \ref{lem: integrability of exp V}, we have $\psi \in L_{\bmu_{\infty}}^q(U, L^{\infty}((0, T), H^1(\bT)))$ for any $1 \le q < \infty$.
	\end{proof}
	
	\begin{lemma} \label{lem: well-posedness of psi m}
		Let Assumptions \ref{assp: general}--\ref{assp: summability of b} hold. Then, for any $T > 0$ and $\bxi \in U_a$, we have
		\begin{align} \label{eq: well-posedness of psi m}
			\|\psi_m(t,\bxi)\|_{H^s(\bT)} \le \|\psi_{\mathrm{in}}\|_{H^s(\bT)} \exp \left( C_{\mathrm{a}, s} T \|V_m(\bxi)\|_{H^s(\bT)} \right), \quad 0 \le t \le T.
		\end{align}
		Moreover, we have $\psi_m \in L_{\bmu_{\infty}}^q(U, L^{\infty}((0, T), H^s(\bT)))$, and $\| \psi_m \|_{L_{\bmu_{\infty}}^q(U, L^{\infty}((0, T), H^s(\bT)))}$ can be bounded uniformly in $m$ for any $1 \le q < \infty$.
	\end{lemma}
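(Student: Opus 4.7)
The plan is to mirror the argument used for Lemma \ref{lem: well-posedness of psi}, but work at the higher regularity level $H^s(\bT)$ (which is available because $V_m$ is a finite sum with $v_j \in H^s$) and then take the $L^q_{\bmu_\infty}$-norm using Lemma \ref{lem: integrability of exp V m} to secure uniformity in $m$.

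First, I fix $\bxi \in U_a$ and write down the Duhamel formula for the truncated problem \eqref{eq: Schrodinger trun},
\begin{align*}
\psi_m(t,\bxi,x) = \re^{\ri t\partial_x^2/2}\psi_{\mathrm{in}}(x) - \ri\int_0^t \re^{\ri(t-\rho)\partial_x^2/2} V_m(\bxi,x)\psi_m(\rho,\bxi,x)\,\rd\rho.
\end{align*}
Next, I take the $H^s(\bT)$-norm on both sides. Since $\re^{\ri t\partial_x^2/2}$ is an isometry on $H^s(\bT)$ for every $t \in \bR$, and since $s \ge 1 > 1/2$ so that \eqref{eq: algebraic property} applies with $r=s$, I obtain the integral inequality
\begin{align*}
\|\psi_m(t,\bxi)\|_{H^s(\bT)} \le \|\psi_{\mathrm{in}}\|_{H^s(\bT)} + C\,\|V_m(\bxi)\|_{H^s(\bT)} \int_0^t \|\psi_m(\rho,\bxi)\|_{H^s(\bT)}\,\rd\rho.
\end{align*}
A standard bootstrap argument (identical to the one used in the proof of Lemma \ref{lem: well-posedness of psi} and spelled out in \cite[Appendix A]{wu2024error}) upgrades this a priori bound to local-in-time, and then global-in-time, well-posedness in $H^s(\bT)$ for this deterministic PDE (recall $\bxi \in U_a$ so $\|V_m(\bxi)\|_{H^s(\bT)}<\infty$ by Assumption \ref{assp: summability of b}). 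Applying Gronwall's inequality then yields the pointwise estimate \eqref{eq: well-posedness of psi m}.

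For the second statement, I take the $L^\infty_t$-norm on $[0,T]$ first (the bound on the right-hand side of \eqref{eq: well-posedness of psi m} is already time-independent), and then the $L^q_{\bmu_\infty}$-norm in $\bxi$:
\begin{align*}
\|\psi_m\|_{L^q_{\bmu_\infty}(U, L^\infty((0,T),H^s(\bT)))} \le \|\psi_{\mathrm{in}}\|_{H^s(\bT)} \,\bigl\|\exp\!\bigl(CT\|V_m(\bxi)\|_{H^s(\bT)}\bigr)\bigr\|_{L^q_{\bmu_\infty}(U)}.
\end{align*}
Lemma \ref{lem: integrability of exp V m} applied with $K = CT$ guarantees that the second factor is finite and, crucially, bounded uniformly in $m$, which delivers the desired uniform bound and completes the proof.

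I do not anticipate a real obstacle here: the analytic ingredients (unitarity of the free propagator, the $H^s$ algebra property, bootstrap and Gronwall) are the same as in Lemma \ref{lem: well-posedness of psi}, and the only genuinely new input is the uniform-in-$m$ control of the exponential moment of $\|V_m\|_{H^s(\bT)}$, which is precisely the content of Lemma \ref{lem: integrability of exp V m}. The slightly delicate point to keep track of is that the algebra constant $C$ in \eqref{eq: algebraic property} now depends on $s$ rather than on $1$, but since $s$ is fixed once and for all, this is harmless.
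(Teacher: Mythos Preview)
Your proposal is correct and follows exactly the approach the paper indicates: it simply says the proof uses Lemma \ref{lem: integrability of exp V m} and is similar to that of Lemma \ref{lem: well-posedness of psi}, which is precisely the Duhamel--algebra--Gronwall argument at the $H^s$ level followed by the uniform-in-$m$ exponential moment bound that you have written out.
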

	
	\begin{proof}
		The proof uses \eqref{eq: algebraic property} and Lemma \ref{lem: integrability of exp V m}, and is similar to that of Lemma \ref{lem: well-posedness of psi}, so we omit it here.
	\end{proof}
	
	Now we give the dimension truncation error of the solution and the expectation of the physical observable.
	
	\begin{lemma} \label{lem: dimension truncation error}
		Under Assumptions \ref{assp: general}--\ref{assp: convergence of V}, we have for any $T > 0$
		\begin{align} \label{eq: dimension truncation error}
			\| \psi_m - \psi \|_{L_{\bmu_{\infty}}^2(U, L^{\infty}((0, T), H^1(\bT)))} \le C m^{- \chi},
		\end{align}
		where $C$ is independent of $m$.
	\end{lemma}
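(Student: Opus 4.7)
The approach is to derive a pathwise Gronwall estimate for the deterministic difference $\psi_m(\bxi) - \psi(\bxi)$ in $L^\infty((0,T); H^1(\bT))$, and then take the $L^2_{\bmu_\infty}(U)$ norm using a three-factor H\"older inequality together with Assumption \ref{assp: convergence of V} and Lemmas \ref{lem: well-posedness of psi m} and \ref{lem: integrability of exp V}.

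First, I would fix $\bxi \in U_a$ (which is full-measure by Lemma \ref{lem: measurability}), so that both $\psi(\cdot,\bxi)$ and $\psi_m(\cdot,\bxi)$ are well defined via the Duhamel formula \eqref{eq: solution by Duhamel's formula}. Subtracting the two Duhamel representations and rewriting $V_m\psi_m - V\psi = (V_m-V)\psi_m + V(\psi_m-\psi)$, I would use the $H^1(\bT)$-isometry property of $\re^{\ri t\partial_x^2/2}$ and the algebra inequality \eqref{eq: algebraic property} to obtain
\begin{align*}
\|\psi_m(t,\bxi)-\psi(t,\bxi)\|_{H^1(\bT)} &\le C\|V_m(\bxi)-V(\bxi)\|_{H^1(\bT)} \int_0^t \|\psi_m(\rho,\bxi)\|_{H^1(\bT)}\,\rd\rho \\
&\quad + C\|V(\bxi)\|_{H^1(\bT)} \int_0^t \|\psi_m(\rho,\bxi)-\psi(\rho,\bxi)\|_{H^1(\bT)}\,\rd\rho.
\end{align*}
Gronwall's inequality then yields the pathwise bound
\begin{align*}
\sup_{0\le t\le T} \|\psi_m(t,\bxi)-\psi(t,\bxi)\|_{H^1(\bT)} \le CT \, \|V_m(\bxi)-V(\bxi)\|_{H^1(\bT)} \, \|\psi_m(\bxi)\|_{L^\infty((0,T);H^1(\bT))}\, \re^{CT\|V(\bxi)\|_{H^1(\bT)}}.
\end{align*}

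Next, I would square both sides, integrate against $\bmu_\infty$, and apply H\"older's inequality to the resulting triple product, using exponents $(2+\varepsilon, q_2, q_3)$ satisfying $\tfrac{1}{2+\varepsilon}+\tfrac{1}{q_2}+\tfrac{1}{q_3}=\tfrac{1}{2}$ (feasible because $\varepsilon>0$). Assumption \ref{assp: convergence of V} bounds $\|V_m-V\|_{L^{2+\varepsilon}_{\bmu_\infty}(U, H^1(\bT))}$ by $Cm^{-\chi}$, Lemma \ref{lem: well-posedness of psi m} bounds $\|\psi_m\|_{L^{q_2}_{\bmu_\infty}(U, L^\infty((0,T);H^1(\bT)))}$ uniformly in $m$ (using $H^s \hookrightarrow H^1$), and Lemma \ref{lem: integrability of exp V} bounds $\|\re^{CT\|V\|_{H^1(\bT)}}\|_{L^{q_3}_{\bmu_\infty}(U)}$. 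Combining these produces the claimed $Cm^{-\chi}$ decay.

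I expect the only real subtlety to lie in the H\"older split: the $m$-dependent factor must be placed in $L^{2+\varepsilon}$ so that Assumption \ref{assp: convergence of V} applies, and it is precisely the slack $\varepsilon>0$ that lets the exponential factor $\re^{CT\|V\|_{H^1(\bT)}}$ sit in a finite $L^{q_3}$ space via the Fernique-type estimate encoded in Lemma \ref{lem: integrability of exp V}. Had Assumption \ref{assp: convergence of V} only supplied $L^2$ convergence of $V_m$, this argument would fail; everything else is standard Gronwall and H\"older bookkeeping.
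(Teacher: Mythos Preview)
Your proposal is correct and follows essentially the same approach as the paper: the paper also derives the pathwise Gronwall bound $\|\psi_m(t,\bxi)-\psi(t,\bxi)\|_{H^1(\bT)} \le CT\,\|\psi_m(\bxi)\|_{L^\infty((0,T);H^1(\bT))}\,\|V_m(\bxi)-V(\bxi)\|_{H^1(\bT)}\,\exp(CT\|V(\bxi)\|_{H^1(\bT)})$ via Duhamel and the decomposition $V_m\psi_m - V\psi = (V_m-V)\psi_m + V(\psi_m-\psi)$, and then concludes by H\"older's inequality together with Assumption~\ref{assp: convergence of V}, Lemma~\ref{lem: well-posedness of psi m}, and Lemma~\ref{lem: integrability of exp V}. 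Your explicit identification of the H\"older exponents and the role of the slack $\varepsilon>0$ is a helpful clarification that the paper leaves implicit.
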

	
	\begin{proof}
		Let $\delta\psi = \psi_m - \psi$. Taking the difference between \eqref{eq: Schrodinger} and \eqref{eq: Schrodinger trun}, we have
		\begin{equation*}
			\left\{
			\begin{aligned}
				&\ri\partial_t\delta\psi=-\frac{1}{2}\partial_x^2\delta\psi+V
				\delta\psi+(V_m-V)\psi_m,\quad x\in \bT, \bxi \in U, t>0,\\
				&\delta\psi(t=0)=0,\quad x\in \bT.
			\end{aligned}
			\right.
		\end{equation*}
		For any $\bxi \in U_a$, the Duhamel's formula gives
		\begin{align*}
			\delta\psi(t,\bxi) = -\ri \int_0^t \re^{\ri(t-\rho) \partial_x^2/2} \left( V(\bxi) \delta\psi(\rho,\bxi) + (V_m(\bxi) - V(\bxi)) \psi_m(\rho,\bxi) \right) \rd \rho, \quad 0 \le t \le T,
		\end{align*}
		which by the algebraic property \eqref{eq: algebraic property} of $H^1(\bT)$ gives
		\begin{align*}
			\| \delta\psi(t,\bxi) \|_{H^1(\bT)} \le & C_{\mathrm{a}, 1} \| V(\bxi) \|_{H^1(\bT)} \int_0^t \| \delta\psi(\rho, \bxi) \|_{H^1(\bT)} \rd \rho \\
			& + C_{\mathrm{a}, 1} t \| \psi_m(\bxi) \|_{L^{\infty}((0, T), H^1(\bT))} \| V_m(\bxi) - V(\bxi) \|_{H^1(\bT)}, \quad 0 \le t \le T,
		\end{align*}
		where the constant $C_{\mathrm{a}, 1}$ is independent of $\bxi$. By Gronwall's inequality, we have for any $0 \le t \le T$ and $\bxi \in U_a$
		\begin{align} \label{eq: Hs norm of delta psi}
			\| \delta\psi(t,\bxi) \|_{H^1(\bT)} \le C_{\mathrm{a}, 1} T \| \psi_m(\bxi) \|_{L^{\infty}((0, T), H^1(\bT))} \| V_m(\bxi) - V(\bxi) \|_{H^1(\bT)} \exp \left( C_{\mathrm{a}, 1} T \| V(\bxi) \|_{H^1(\bT)} \right).
		\end{align}
		Then, \eqref{eq: Hs norm of delta psi} gives \eqref{eq: dimension truncation error} by Lemmas \ref{lem: integrability of exp V} and \ref{lem: well-posedness of psi m}, Assumption \ref{assp: convergence of V} and H\"{o}lder's inequality.
	\end{proof}
	
	\begin{lemma} \label{lem: dimension truncation of physical observable}
		Under Assumptions \ref{assp: general}--\ref{assp: convergence of V}, we have for any $T > 0$
		\begin{align} \label{eq: dimension truncation of physical observable}
			| \E[G(|\psi(t)|^2)] - \E[G(|\psi_m(t)|^2)] | \le C m^{-\chi}, \quad 0 \le t \le T,
		\end{align}
		where $C$ is independent of $m$.
	\end{lemma}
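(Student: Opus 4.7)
The plan is to reduce the error for the expected physical observable to the $L^2$-in-$\bxi$, $L^\infty$-in-time, $H^1$-in-space error of $\psi_m - \psi$ already controlled by Lemma \ref{lem: dimension truncation error}. First I would use the linearity of $G$ to write
$$\E[G(|\psi(t)|^2)] - \E[G(|\psi_m(t)|^2)] = \E\bigl[G\bigl(|\psi(t)|^2 - |\psi_m(t)|^2\bigr)\bigr],$$
and then apply the duality $G \in (H^1(\bT))'$ from Assumption \ref{assp: general} pointwise in $\bxi$ to bound the integrand by $\|G\|_{(H^1(\bT))'}\,\||\psi(t)|^2 - |\psi_m(t)|^2\|_{H^1(\bT)}$.

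Next I would factor the difference of squares as $|\psi|^2 - |\psi_m|^2 = (\psi - \psi_m)\overline{\psi} + \psi_m\,\overline{(\psi - \psi_m)}$ and invoke the algebraic property \eqref{eq: algebraic property} of $H^1(\bT)$ (applicable since $1 > 1/2$) to obtain
$$\||\psi|^2 - |\psi_m|^2\|_{H^1(\bT)} \le C\,\|\psi - \psi_m\|_{H^1(\bT)}\bigl(\|\psi\|_{H^1(\bT)} + \|\psi_m\|_{H^1(\bT)}\bigr).$$

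After taking the expectation I would apply the Cauchy--Schwarz inequality to split the right-hand side into $\|\psi - \psi_m\|_{L^2_{\bmu_\infty}(U, H^1(\bT))}$ multiplied by the sum of the $L^2_{\bmu_\infty}(U)$-norms of $\|\psi\|_{H^1}$ and $\|\psi_m\|_{H^1}$, and then pass to the supremum over $t \in [0,T]$ on both sides. Lemma \ref{lem: dimension truncation error} supplies the desired factor $Cm^{-\chi}$, while Lemmas \ref{lem: well-posedness of psi} and \ref{lem: well-posedness of psi m} guarantee that the remaining two factors are finite and bounded uniformly in $m$ (for the latter, using the continuous embedding $H^s(\bT) \hookrightarrow H^1(\bT)$ with $s \ge 3$).

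No substantive obstacle is expected: the proof is a clean consequence of the preparatory lemmas together with the standard factorization of $|\psi|^2 - |\psi_m|^2$. The only point requiring mild attention is ensuring that the random-space integrability exponents line up, namely that Lemmas \ref{lem: well-posedness of psi}, \ref{lem: well-posedness of psi m}, and \ref{lem: dimension truncation error} are all available at the $L^2_{\bmu_\infty}$ level, which indeed they are.
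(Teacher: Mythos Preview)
Your proposal is correct and follows essentially the same approach as the paper: bound $|G(|\psi|^2-|\psi_m|^2)|$ via $\|G\|_{(H^1(\bT))'}$, factor the difference of squares and control it in $H^1(\bT)$ using the algebraic property, then apply Cauchy--Schwarz in $\bxi$ and invoke Lemmas \ref{lem: well-posedness of psi}, \ref{lem: well-posedness of psi m}, and \ref{lem: dimension truncation error}. The paper's factorization is written slightly differently (it bounds by $\|\psi+\psi_m\|_{H^1}\|\psi-\psi_m\|_{H^1}$ rather than your $(\|\psi\|_{H^1}+\|\psi_m\|_{H^1})\|\psi-\psi_m\|_{H^1}$), but the two are equivalent up to a constant; note also that only $s\ge 1$ is needed here, not $s\ge 3$.
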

	
	\begin{proof}
		We have $\| |\psi(t)|^2 - |\psi_m(t)|^2 \|_{H^1(\bT)} \le C_{\mathrm{a}, 1} \| \psi(t) + \psi_m(t) \|_{H^1(\bT)} \| \psi(t) - \psi_m(t) \|_{H^1(\bT)}$ by the algebraic property \eqref{eq: algebraic property} of $H^1(\bT)$. Then,
		\begin{align*}
			| \E[G(|\psi(t)|^2)] - \E[G(|\psi_m(t)|^2)] | \le & \E[ | G(|\psi(t)|^2 - |\psi_m(t)|^2) | ] \\
			\le & \|G\|_{H^1(\bT)'} \E[\| |\psi(t)|^2 - |\psi_m(t)|^2 \|_{H^1(\bT)}] \\
			\le & C_{\mathrm{a}, 1} \|G\|_{H^1(\bT)'} \E[ \| \psi(t) + \psi_m(t) \|_{H^1(\bT)} \| \psi(t) - \psi_m(t) \|_{H^1(\bT)} ] \\
			\le & C_{\mathrm{a}, 1} \|G\|_{H^1(\bT)'} \left( \| \psi(t) \|_{L_{\bmu_{\infty}}^2(U, H^1(\bT))} + \| \psi_m(t) \|_{L_{\bmu_{\infty}}^2(U, H^1(\bT))} \right) \\
			& \times \| \psi(t) - \psi_m(t) \|_{L_{\bmu_{\infty}}^2(U, H^1(\bT))},
		\end{align*}
		where we have used the Cauchy--Schwarz inequality in the last inequality. The above equation gives \eqref{eq: dimension truncation of physical observable} by Lemmas \ref{lem: well-posedness of psi}--\ref{lem: dimension truncation error}.
	\end{proof}

    \begin{remark}
        As shown in the proofs of Lemmas \ref{lem: dimension truncation error}--\ref{lem: dimension truncation of physical observable}, by using Duhamel's formula, we find that the dimension truncation errors of the solution $\psi_m$ and the physical observable $G(|\psi_m|^2)$ would be controlled by that of the potential $V_m$. However, as will be shown in Example \ref{expl: extreme highD}, these error estimates for dimension truncation might not be optimal.
    \end{remark}
	
	\subsection{Temporal error}
	\label{subsec: temporal error}
	
	We first give two lemmas on the regularity of $\partial_t \psi_m$ and the semi-discrete solution $\psi_m^n$ in the physical domain, respectively.

	\begin{lemma} \label{lem: regularity of time derivative}
		Let Assumptions \ref{assp: general}--\ref{assp: summability of b} hold with $s \ge 3$. Then, for any $T > 0$ and $\bxi \in U_a$,
		\begin{align} \label{eq: regularity of time derivative}
			\| \partial_t \psi_m(t, \bxi) \|_{H^{s - 2}(\bT)} \le \| \psi_{\mathrm{in}} \|_{H^s(\bT)} + & (C_{\mathrm{a}, s - 2} \| V_m(\bxi) \|_{H^{s - 2}(\bT)} + T C_{\mathrm{a}, s} \| V_m(\bxi) \|_{H^{s}(\bT)}) \nonumber \\
            & \times \| \psi_{\mathrm{in}} \|_{H^s(\bT)} \exp \left( C_{\mathrm{a}, s} T \| V_m(\bxi) \|_{H^{s}(\bT)} \right)
		\end{align}
		where $0 \le t \le T$. Moreover, for any $1 \le q < \infty$, we have $\partial_t \psi_m \in L_{\bmu_{\infty}}^q(U, L^{\infty}((0, T), H^{s - 2}(\bT)))$ and $\| \partial_t \psi_m \|_{L_{\bmu_{\infty}}^q(U, L^{\infty}((0, T), H^{s - 2}(\bT)))}$ can be bounded uniformly in $m$.
	\end{lemma}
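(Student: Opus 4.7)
The strategy is to show that $\partial_t\psi_m$ itself solves the same linear Schrödinger equation as $\psi_m$ and then invoke essentially the same energy/bootstrap argument used in Lemma \ref{lem: well-posedness of psi m}, just in $H^{s-2}(\bT)$ rather than in $H^s(\bT)$. Concretely, differentiating \eqref{eq: Schrodinger trun} in $t$ and using that $V_m$ is time-independent gives
\begin{equation*}
\ri\partial_t(\partial_t\psi_m)=-\tfrac{1}{2}\partial_x^2(\partial_t\psi_m)+V_m(\partial_t\psi_m),
\end{equation*}
so $w:=\partial_t\psi_m$ satisfies the same equation with initial datum obtained by evaluating \eqref{eq: Schrodinger trun} at $t=0$:
\begin{equation*}
w(0,\bxi,x)=\ri\bigl(\tfrac{1}{2}\partial_x^2\psi_{\mathrm{in}}(x)-V_m(\bxi,x)\psi_{\mathrm{in}}(x)\bigr).
\end{equation*}

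Next I would repeat the Duhamel/Gronwall argument of Lemma \ref{lem: well-posedness of psi m}, but in $H^{s-2}(\bT)$. Since $s\ge 3$, we have $s-2\ge 1>1/2$, so the algebraic property \eqref{eq: algebraic property} is available at the level $H^{s-2}(\bT)$, and $\re^{\ri t\partial_x^2/2}$ is still an isometry on $H^{s-2}(\bT)$. Fixing $\bxi\in U_a$ and writing Duhamel's formula for $w$, taking $H^{s-2}$-norms and applying \eqref{eq: algebraic property} yields
\begin{equation*}
\|w(t,\bxi)\|_{H^{s-2}(\bT)}\le\|w(0,\bxi)\|_{H^{s-2}(\bT)}+C\|V_m(\bxi)\|_{H^{s-2}(\bT)}\int_0^t\|w(\rho,\bxi)\|_{H^{s-2}(\bT)}\,\rd\rho,
\end{equation*}
to which Gronwall's inequality applies and produces the claimed exponential factor $\exp(CT\|V_m(\bxi)\|_{H^{s-2}(\bT)})$.

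To finish the pointwise estimate \eqref{eq: regularity of time derivative}, I would bound the initial datum by the triangle inequality and the algebraic property of $H^{s-2}(\bT)$:
\begin{equation*}
\|w(0,\bxi)\|_{H^{s-2}(\bT)}\le\tfrac{1}{2}\|\partial_x^2\psi_{\mathrm{in}}\|_{H^{s-2}(\bT)}+\|V_m(\bxi)\psi_{\mathrm{in}}\|_{H^{s-2}(\bT)}\le\tfrac{1}{2}\|\psi_{\mathrm{in}}\|_{H^s(\bT)}+C\|\psi_{\mathrm{in}}\|_{H^{s-2}(\bT)}\|V_m(\bxi)\|_{H^{s-2}(\bT)},
\end{equation*}
which slots directly into the Gronwall bound.

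For the $L_{\bmu_{\infty}}^q$ statement, I would use that $\|V_m(\bxi)\|_{H^{s-2}(\bT)}\le\|V_m(\bxi)\|_{H^s(\bT)}$ and combine Lemma \ref{lem: integrability of V} (to bound the polynomial factor $\|V_m(\bxi)\|_{H^{s-2}(\bT)}$ in $L_{\bmu_{\infty}}^{2q}$ uniformly in $m$) with Lemma \ref{lem: integrability of exp V m} (to bound $\exp(CT\|V_m(\bxi)\|_{H^{s-2}(\bT)})$ in $L_{\bmu_{\infty}}^{2q}$ uniformly in $m$) via the Cauchy–Schwarz inequality, giving the uniform-in-$m$ $L_{\bmu_{\infty}}^q$ bound on $\|\partial_t\psi_m\|_{L^\infty((0,T),H^{s-2}(\bT))}$. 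There is no real obstacle here beyond bookkeeping; the only substantive point is the observation that $\partial_t\psi_m$ inherits the Schrödinger dynamics, which reduces the lemma to a copy of Lemma \ref{lem: well-posedness of psi m} at two indices lower.
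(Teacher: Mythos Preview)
Your proposal is correct and follows essentially the same route as the paper: differentiate the equation in $t$, observe that $\partial_t\psi_m$ satisfies the same Schr\"odinger equation with initial datum $\tfrac{\ri}{2}\partial_x^2\psi_{\mathrm{in}}-\ri V_m\psi_{\mathrm{in}}$, apply Duhamel plus Gronwall in $H^{s-2}(\bT)$, and then use Lemmas~\ref{lem: integrability of V} and~\ref{lem: integrability of exp V m} with H\"older (your Cauchy--Schwarz) for the $L_{\bmu_{\infty}}^q$ bound. The only cosmetic difference is that the paper invokes H\"older rather than Cauchy--Schwarz for the final integrability step.
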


	\begin{proof}
        Fix any $T > 0$ and $\bxi \in U_a$. By Duhamel's formula, we have
        \begin{align*}
            \psi_m(t, \bxi, x) = \re^{\ri t \partial_x^2 / 2} \psi_{\mathrm{in}}(x) - \ri \int_0^t \re^{\ri (t - \rho) \partial_x^2 / 2} V_m(\bxi, x) \psi_m(\rho, \bxi, x) \rd \rho, \quad 0 \le t \le T.
        \end{align*}
        Taking the partial derivative of both sides of the above equation with respect to $t$, we have
        \begin{align*}
            \partial_t \psi_m(t, \bxi, x) = \frac{\ri}{2} \re^{\ri t \partial_x^2 / 2} \partial_x^2 \psi_{\mathrm{in}}(x) - \ri V_m(\bxi, x) \psi_m(t, \bxi, x) + \frac{1}{2} \int_0^t \re^{\ri (t - \rho) \partial_x^2 / 2} \partial_x^2 (V_m(\bxi, x) \psi_m(\rho, \bxi, x)) \rd \rho.
        \end{align*}
        Taking the $H^{s - 2}$-norm on both sides of the above equation, we have by the algebraic property \eqref{eq: algebraic property}
        \begin{align*}
            \| \partial_t \psi_m(t, \bxi) \|_{H^{s - 2}(\bT)} \le & \| \psi_{\mathrm{in}} \|_{H^s(\bT)} + C_{\mathrm{a}, s - 2} \| V_m(\bxi) \|_{H^{s - 2}(\bT)} \| \psi_m(t, \bxi) \|_{H^{s - 2}(\bT)} \\
            & + C_{\mathrm{a}, s} \| V_m(\bxi) \|_{H^s(\bT)} \int_0^t \| \psi(\rho, \bxi) \|_{H^s(\bT)} \rd \rho \\
            \le & \| \psi_{\mathrm{in}} \|_{H^s(\bT)} + C_{\mathrm{a}, s - 2} \| V_m(\bxi) \|_{H^{s - 2}(\bT)} \| \psi_m(t, \bxi) \|_{H^{s}(\bT)} \\
            & + C_{\mathrm{a}, s} \| V_m(\bxi) \|_{H^s(\bT)} \int_0^t \| \psi(\rho, \bxi) \|_{H^s(\bT)} \rd \rho.
        \end{align*}
        We obtain \eqref{eq: regularity of time derivative} by the above equation and Lemma \ref{lem: well-posedness of psi m}. Moreover, for any $1 \le q < \infty$, we have $\partial_t \psi_m \in L_{\bmu_{\infty}}^q(U, L^{\infty}((0, T), H^{s - 2}(\bT)))$ and $\| \partial_t \psi_m \|_{L_{\bmu_{\infty}}^q(U, L^{\infty}((0, T), H^{s - 2}(\bT)))}$ can be bounded uniformly in $m$, by Lemmas \ref{lem: integrability of V} and \ref{lem: integrability of exp V m} and H\"{o}lder's inequality.
	\end{proof}
	
	\begin{lemma} \label{lem: regularity of semidiscrete solution}
		Let Assumptions \ref{assp: general}--\ref{assp: summability of b} hold. Then, for any $0 < \tau \le T$ and $\bxi \in U_a$,
		\begin{align} \label{eq: regularity of semidiscrete solution}
			\| \psi_m^{n}(\bxi) \|_{H^s(\bT)} \le \| \psi_{\mathrm{in}} \|_{H^s(\bT)} \exp(C_{\mathrm{a}, s} T \| V_m(\bxi) \|_{H^s(\bT)}), \quad 0 \le n \le \lfloor T / \tau \rfloor.
		\end{align}
		Moreover, we have $\psi_m^n \in L_{\bmu_{\infty}}^q(U, H^s(\bT))$ and $\| \psi_m^n \|_{L_{\bmu_{\infty}}^q(U, H^s(\bT))}$ can be bounded uniformly in $m$ and $n$ for any $1 \le q < \infty$ and $0 \le n \le \lfloor T / \tau \rfloor$.
	\end{lemma}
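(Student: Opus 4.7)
The plan is to show that one full step of the Lie--Trotter splitting \eqref{eq: Lie splitting} amplifies the $H^s(\bT)$ norm by at most a factor $\exp(C\tau\|V_m(\bxi)\|_{H^s(\bT)})$, and then to iterate this bound $n\le\lfloor T/\tau\rfloor$ times. Integrability in $\bxi$ will follow as in Lemma \ref{lem: well-posedness of psi m}.

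First I fix $\bxi\in U_a$ and note that the kinetic subflow $\re^{\ri\tau\partial_x^2/2}$ is an isometry on $H^s(\bT)$ for every $s\ge 0$. Hence it suffices to control the potential subflow $\phi\mapsto \re^{-\ri\tau V_m(\bxi,\cdot)}\phi$ on $H^s(\bT)$. Following Remark \ref{rem: intermediate solution}, for any $\phi\in H^s(\bT)$ the function $g(t,\bxi,x)=\re^{-\ri t V_m(\bxi,x)}\phi(x)$ solves $\ri\partial_t g=V_m g$ with $g(0)=\phi$, so Duhamel's formula yields
\begin{align*}
\re^{-\ri\tau V_m(\bxi)}\phi = \phi - \ri\int_0^{\tau} V_m(\bxi)\,\re^{-\ri\rho V_m(\bxi)}\phi\,\rd\rho.
\end{align*}
Taking the $H^s$-norm and using the algebraic property \eqref{eq: algebraic property} of $H^s(\bT)$ (which applies since $s\ge 1>1/2$), I get
\begin{align*}
\|\re^{-\ri\tau V_m(\bxi)}\phi\|_{H^s(\bT)} \le \|\phi\|_{H^s(\bT)} + C\|V_m(\bxi)\|_{H^s(\bT)}\int_0^{\tau}\|\re^{-\ri\rho V_m(\bxi)}\phi\|_{H^s(\bT)}\,\rd\rho,
\end{align*}
and Gronwall's inequality then gives $\|\re^{-\ri\tau V_m(\bxi)}\phi\|_{H^s(\bT)}\le \|\phi\|_{H^s(\bT)}\exp(C\tau\|V_m(\bxi)\|_{H^s(\bT)})$.

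Combining this with the isometry property of $\re^{\ri\tau\partial_x^2/2}$, one Lie--Trotter step satisfies
\begin{align*}
\|\psi_m^{n+1}(\bxi)\|_{H^s(\bT)} \le \|\psi_m^n(\bxi)\|_{H^s(\bT)}\exp\bigl(C\tau\|V_m(\bxi)\|_{H^s(\bT)}\bigr).
\end{align*}
Iterating from $\psi_m^0=\psi_{\mathrm{in}}$ and using $n\tau\le T$ produces \eqref{eq: regularity of semidiscrete solution}. Finally, for any $1\le q<\infty$, H\"older's inequality combined with $\psi_{\mathrm{in}}\in H^s(\bT)$ (Assumption \ref{assp: general}) and the uniform-in-$m$ bound $\|\exp(CT\|V_m(\bxi)\|_{H^s(\bT)})\|_{L^{q}_{\bmu_{\infty}}(U)}\le C_{q,T}$ from Lemma \ref{lem: integrability of exp V m} yields the uniform-in-$(m,n)$ bound on $\|\psi_m^n\|_{L^q_{\bmu_{\infty}}(U,H^s(\bT))}$, exactly as in the proof of Lemma \ref{lem: well-posedness of psi m}.

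There is no real obstacle here: the argument is essentially the semi-discrete analogue of Lemma \ref{lem: well-posedness of psi m}, with the only subtlety being that one must estimate the potential subflow in $H^s$ rather than rely directly on unitarity. The choice of treating $\re^{-\ri\tau V_m}$ through its defining ODE (rather than, say, through a power-series expansion of the multiplier) is what makes the algebraic property of $H^s(\bT)$ applicable and avoids losing derivatives on $V_m$.
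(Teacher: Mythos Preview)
Your proof is correct and follows essentially the same approach as the paper: both use the isometry of the kinetic subflow on $H^s(\bT)$, bound the potential subflow via the algebraic property of $H^s(\bT)$ together with Gronwall's inequality (you write out Duhamel explicitly, the paper works directly from the ODE $\ri\partial_t g_n=V_m g_n$, but the resulting one-step bound is identical), iterate, and then invoke Lemma~\ref{lem: integrability of exp V m} for the uniform-in-$(m,n)$ integrability.
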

	
	\begin{proof}
		Fix $T > 0$ and $\bxi \in U_a$. Recall $\psi_m^{n, *}$ defined in Remark \ref{rem: intermediate solution}. Since $\re^{\ri \tau \partial_x^2 / 2}$ is a linear isometry on $H^s(\bT)$, we have $\| \psi_{m}^{n} \|_{H^s(\bT)} = \| \psi_{m}^{n, *} \|_{H^s(\bT)}$. Furthermore, we consider $g_n$ introduced in Remark \ref{rem: intermediate solution}. Taking the $H^s(\bT)$-norm on both sides of \eqref{eq: intermediate solution}, we have by the algebraic property of $H^s(\bT)$ and Gronwall's inequality that $\| g(\tau) \|_{H^s(\bT)} \le \| g(0) \|_{H^s(\bT)} \exp(C_{\mathrm{a}, s} \tau \| V_m \|_{H^s(\bT)})$, i.e.,
		\begin{align} \label{eq: estimate of semi-discrete solution}
			\| \psi_m^{n}(\bxi) \|_{H^s(\bT)} = \| \psi_m^{n, *}(\bxi) \|_{H^s(\bT)} \le \| \psi_{m}^{n - 1}(\bxi) \|_{H^s(\bT)} \exp(C_{\mathrm{a}, s} \tau \| V_m(\bxi) \|_{H^s(\bT)}).
		\end{align}
		We have \eqref{eq: regularity of semidiscrete solution} by iterating \eqref{eq: estimate of semi-discrete solution} from $n$ to $1$. Moreover, for any $1 \le q < \infty$ and $0 \le n \le \lfloor T / \tau \rfloor$, we have by Lemma \ref{lem: integrability of exp V m} $\psi_m^n \in L_{\bmu_{\infty}}^q(U, H^s(\bT))$ and $\| \psi_m^n \|_{L_{\bmu_{\infty}}^q(U, H^s(\bT))}$ can be bounded independently of $m$ and $n$.
	\end{proof}
	
	Now we can give the temporal error of the solution and the expectation of the physical observable.
	
	\begin{lemma} \label{lem: temporal error of solution}
		Let Assumptions \ref{assp: general}--\ref{assp: summability of b} hold with $s \ge 3$. Then, for any $0 < \tau \le T$,
		\begin{align} \label{eq: temporal error of solution}
			\| \psi_m(t_n) - \psi_m^n \|_{L_{\bmu_{\infty}}^2(U, H^{s - 2}(\bT))} \le C \tau, \quad 0 \le t_n = n \tau \le T,
		\end{align}
		where $C$ is independent of $m$ and $\tau$.
	\end{lemma}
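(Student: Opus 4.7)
I would prove this via the classical Lady Windermere's fan argument combined with a local error estimate and a stability bound for the Lie--Trotter flow in $H^{s - 2}(\bT)$, and then integrate the resulting pointwise bound in $\bxi$ using the exponential moment estimates of Lemma \ref{lem: integrability of exp V m}. Fix $\bxi \in U_a$, let $S_\tau$ denote the exact one-step flow of \eqref{eq: Schrodinger trun}, and let $T_\tau = \Psi^\mathrm{k}_\tau \circ \Psi^\mathrm{p}_\tau$ denote the Lie--Trotter flow, so that $\psi_m(t_{k+1},\bxi) = S_\tau \psi_m(t_k,\bxi)$ and $\psi_m^{k+1}(\bxi) = T_\tau \psi_m^k(\bxi)$. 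The telescoping identity
\[
\psi_m(t_n,\bxi) - \psi_m^n(\bxi) = \sum_{k = 0}^{n - 1} T_\tau^{\,n - 1 - k} \bigl( S_\tau - T_\tau \bigr) \psi_m(t_k, \bxi)
\]
reduces the task to (i) a stability bound on $T_\tau^{\,j}$ in $H^{s - 2}(\bT)$ and (ii) a local error bound on $S_\tau - T_\tau$.

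\textbf{Stability.} Since $\re^{\ri\tau\partial_x^2/2}$ is a linear isometry on $H^{s - 2}(\bT)$ and, for $s \ge 3$, the algebra property \eqref{eq: algebraic property} holds in $H^{s - 2}(\bT)$, a Duhamel/Gronwall argument identical to that in the proof of Lemma \ref{lem: regularity of semidiscrete solution} gives $\|T_\tau^{\,j} f\|_{H^{s - 2}(\bT)} \le \exp\bigl(C j \tau \|V_m(\bxi)\|_{H^{s - 2}(\bT)}\bigr) \|f\|_{H^{s - 2}(\bT)}$, which for $0 \le j \le n$ is bounded by $\exp(C T \|V_m(\bxi)\|_{H^s(\bT)})$.

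\textbf{Local error.} Expanding both $S_\tau$ and $T_\tau$ to second order in $\tau$ using the equations they solve shows that the $O(\tau)$ terms match, and the leading $O(\tau^2)$ discrepancy is (up to constants) the commutator term $\tau^2 [\partial_x^2, V_m] \psi_m(t_k,\bxi)$, plus quadratic remainder integrals involving products like $V_m^2 \psi_m$ and $V_m \partial_t \psi_m$. Using the pointwise identity $[\partial_x^2, V_m] f = 2(\partial_x V_m)(\partial_x f) + (\partial_x^2 V_m) f$ together with the algebra property in $H^{s - 2}(\bT)$, and invoking Lemmas \ref{lem: regularity of time derivative} and \ref{lem: regularity of semidiscrete solution} to control $\psi_m(t_k,\bxi)$ in $H^s(\bT)$ and $\partial_t \psi_m$ in $H^{s - 2}(\bT)$, one obtains
\[
\|(S_\tau - T_\tau)\psi_m(t_k,\bxi)\|_{H^{s - 2}(\bT)} \le C \tau^2 \, P\bigl(\|V_m(\bxi)\|_{H^s(\bT)}\bigr) \exp\bigl(C T \|V_m(\bxi)\|_{H^s(\bT)}\bigr)
\]
for a fixed polynomial $P$, with $C$ independent of $m, \tau, k$.

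Combining the stability and local error bounds in the telescoping identity and using $n\tau \le T$ yields a pointwise estimate of $\|\psi_m(t_n,\bxi) - \psi_m^n(\bxi)\|_{H^{s - 2}(\bT)}$ by $C\tau\,\widetilde P(\|V_m(\bxi)\|_{H^s(\bT)})\exp(C T \|V_m(\bxi)\|_{H^s(\bT)})$ for some polynomial $\widetilde P$. Taking the $L^2_{\bmu_\infty}$-norm and applying H\"older together with Lemma \ref{lem: integrability of exp V m} then produces \eqref{eq: temporal error of solution} with a constant independent of $m$ and $\tau$. I expect the main obstacle to be the careful bookkeeping in the local error step: the bound must depend on $\bxi$ only through a polynomial factor times a single exponential in $\|V_m(\bxi)\|_{H^s(\bT)}$, since this is exactly the form that Lemma \ref{lem: integrability of exp V m} can absorb uniformly in $m$. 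This constraint is also what forces the two-derivative loss from $H^s(\bT)$ to $H^{s - 2}(\bT)$ and so imposes the assumption $s \ge 3$ in the statement.
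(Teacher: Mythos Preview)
Your proposal is correct and follows essentially the same route as the paper: stability of the Lie--Trotter map in $H^{s-2}(\bT)$ via the algebra property and Gronwall, an $O(\tau^2)$ local error (the paper organizes this through explicit Taylor expansions of $\re^{-\ri\tau V_m}$ and a Fa\`a di Bruno bound on $\|\re^{-\ri\theta\tau V_m}\|_{H^{s-2}}$ rather than commutator language, but the ingredients and the resulting pointwise bound of the form ``polynomial in $\|V_m(\bxi)\|_{H^s}$ times $\exp(CT\|V_m(\bxi)\|_{H^s})$'' are the same), summation, and then passage to $L^2_{\bmu_\infty}$ via H\"older and Lemma~\ref{lem: integrability of exp V m}. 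One small slip: to control $\psi_m(t_k,\bxi)$ in $H^s(\bT)$ in the local error step you want Lemma~\ref{lem: well-posedness of psi m}, not Lemma~\ref{lem: regularity of semidiscrete solution}, since it is the exact (not semi-discrete) solution that appears there.
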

	
	\begin{proof}
		The proof largely follows the analysis in \cite[Section 3]{su2020time} for the Lie--Trotter splitting. For any $f, g \in H^{s - 2}(\bT)$, we have
		\begin{align*}
			\| \Psi^\mathrm{k}_{\tau} \circ \Psi^\mathrm{p}_{\tau}(f) - \Psi^\mathrm{k}_{\tau} \circ \Psi^\mathrm{p}_{\tau}(g) \|_{H^{s - 2}(\bT)} = \| \Psi^\mathrm{p}_{\tau}(f) - \Psi^\mathrm{p}_{\tau}(g) \|_{H^{s - 2}(\bT)},
		\end{align*}
		since $\re^{\ri \tau \partial_x^2 / 2}$ is an isometry on $H^{s - 2}(\bT)$. Let $\widetilde{f}(t) = \Psi^\mathrm{p}_{t}(f)$ and $\widetilde{g}(t) = \Psi^\mathrm{p}_{t}(g)$. Then, by the Duhamel's formula and the algebraic property of $H^{s - 2}(\bT)$, we have for $0 \le t \le \tau$
		\begin{align*}
			\| \widetilde{f}(t) - \widetilde{g}(t) \|_{H^{s - 2}(\bT)} \le & \| f - g \|_{H^{s - 2}(\bT)} + C_{\mathrm{a}, s - 2} \int_0^{t} \| V \|_{H^{s - 2}(\bT)} \| \widetilde{f}(\rho) - \widetilde{g}(\rho) \|_{H^{s - 2}(\bT)} \rd \rho.
		\end{align*}
		By Gronwall's inequality, we have
		\begin{align*}
			\| \Psi^\mathrm{k}_{\tau} \circ \Psi^\mathrm{p}_{\tau}(f) - \Psi^\mathrm{k}_{\tau} \circ \Psi^\mathrm{p}_{\tau}(g) \|_{H^{s - 2}(\bT)} = \| \widetilde{f}(\tau) - \widetilde{g}(\tau) \|_{H^{s - 2}(\bT)} \le \re^{C_{\mathrm{a}, s - 2} \tau \| V \|_{H^{s - 2}(\bT)}} \| f - g \|_{H^{s - 2}(\bT)}.
		\end{align*}
		
		On the other hand, by Taylor expansion, we have
		\begin{align*}
			\Psi^\mathrm{k}_{\tau} \circ \Psi^\mathrm{p}_{\tau}(\psi_m(t_{n - 1})) = & \re^{\ri \tau \partial_x^2 / 2} \re^{-\ri \tau V_m} \psi_m(t_{n - 1}) \\
			= & \re^{\ri \tau \partial_x^2 / 2} \left( 1 - \ri \tau V_m - \tau^2 V_m^2 \int_0^1 (1 - \theta) \re^{- \ri \theta \tau V_m} \rd \theta \right) \psi_m(t_{n - 1}).
		\end{align*}
		Moreover, Duhamel's formula gives
		\begin{align*}
			\psi_m(t_n) = & \re^{\ri \tau \partial_x^2 / 2} \psi_m(t_{n - 1}) - \ri \int_0^{\tau} \re^{\ri (\tau - \rho) \partial_x^2 / 2} V_m \psi_m(t_{n - 1} + \rho) \rd \rho \\
			= & \re^{\ri \tau \partial_x^2 / 2} \psi_m(t_{n - 1}) - \ri \tau \re^{\ri \tau \partial_x^2 / 2} V_m \psi_m(t_{n - 1}) \\
			& - \re^{\ri \tau \partial_x^2 / 2} \int_0^{\tau} \int_0^1 \re^{- \ri \theta \rho \partial_x^2 / 2} \rd \theta ( \rho \partial_x^2 / 2 ) [ V_m \psi_m(t_{n - 1} + \rho) ] \rd \rho \\
			& - \ri \re^{\ri \tau \partial_x^2 / 2} \int_0^{\tau} V_m (\psi_m(t_{n - 1} + \rho) - \psi_m(t_{n - 1})) \rd \rho.
		\end{align*}
		Then, the local error reads
		\begin{align*}
			& \| \psi_m(t_n, \bxi) - \Psi^\mathrm{k}_{\tau} \circ \Psi^\mathrm{p}_{\tau}(\psi_m(t_{n - 1}, \bxi), \bxi) \|_{H^{s - 2}(\bT)} \\
			& \qquad \le (C_{\mathrm{a}, s - 2})^3 \tau^2 \| V_m(\bxi) \|_{H^{s - 2}(\bT)}^2 \| \psi_m(t_{n - 1}, \bxi) \|_{H^{s - 2}(\bT)} \sup_{0 \le \theta \le 1} \| \re^{- \ri \theta \tau V_m(\bxi)} \|_{H^{s - 2}(\bT)} \\
			& \qquad \quad + C_{\mathrm{a}, s} \tau^2 \sup_{0 \le \rho \le \tau} \| \psi_m(t_{n - 1} + \rho, \bxi) \|_{H^s(\bT)} \| V_m(\bxi) \|_{H^s(\bT)} \\
			& \qquad \quad + C_{\mathrm{a}, s - 2} \| V_m(\bxi) \|_{H^{s - 2}(\bT)} \int_0^{\tau} \| \psi_m(t_{n - 1} + \rho, \bxi) - \psi_m(t_{n - 1}, \bxi) \|_{H^{s - 2}(\bT)} \rd \rho,
		\end{align*}
		where the constants $C_{\mathrm{a}, s - 2}$ and $C_{\mathrm{a}, s}$ come from the algebraic property \eqref{eq: algebraic property} of $H^{s - 2}(\bT)$ and $H^s(\bT)$, respectively. Fa\`{a} di Bruno's formula (see, e.g., \cite{hardy2006combinatorics}) gives
		\begin{align*}
			&\sup_{0 \le \theta \le 1} \| \re^{- \ri \theta \tau V_m(\bxi)} \|_{H^{s - 2}(\bT)} \\
			& \quad \le \sup_{0 \le \theta \le 1} \left( |\bT|^{1/2} + \sum_{j = 1}^{s - 2} \| \partial_x^j \re^{- \ri \theta \tau V_m(\bxi)} \|_{L^{2}(\bT)} \right) \\
            & \quad \le \sup_{0 \le \theta \le 1} \left( |\bT|^{1/2} + \sum_{j = 1}^{s - 2}\sum_{k = 1}^{j} \| \exp(- \ri \theta \tau V_m(\bxi)) B_{j, k}(- \ri \theta \tau \partial_x V_m(\bxi), - \ri \theta \tau \partial_x^2 V_m(\bxi), \ldots, - \ri \theta \tau \partial_x^{j - k + 1} V_m(\bxi)) \|_{L^2(\bT)} \right) \\
			& \quad = \sup_{0 \le \theta \le 1} \left( |\bT|^{1/2} + \sum_{j = 1}^{s - 2}\sum_{k = 1}^{j} \| B_{j, k}(- \ri \theta \tau \partial_x V_m(\bxi), - \ri \theta \tau \partial_x^2 V_m(\bxi), \ldots, - \ri \theta \tau \partial_x^{j - k + 1} V_m(\bxi)) \|_{L^2(\bT)} \right) \\
			& \quad =: K_1(\bxi),
		\end{align*}
		where $|\bT|$ is the Lebesgue measure of $\bT$, the last equality is due the fact that $\exp(- \ri \theta \tau V_m(\bxi))$ is an isometry on $L^2(\bT)$, and
        $\{ B_{j, k} \}_{k = 1}^{j}$, $j = 1, \ldots, s - 2$, are the Bell polynomials \cite{bell1934exponential}. Here, $K_1(\bxi)$ can be bounded by a polynomial involving $(\| V_m(\bxi) \|_{H^j(\bT)})^{\ell}$, $j = 1, \ldots, s - 2$, $\ell = 1, \ldots, s - j - 1$, by the algebraic property \eqref{eq: algebraic property}. Since $V_m \in L_{\bmu_{\infty}}^q(U, H^s(\bT))$ and $\| V_m \|_{L_{\bmu_{\infty}}^q(U, H^s(\bT))}$ can be bounded uniformly in $m$ for any $1 \le q < \infty$ by Lemma \ref{lem: integrability of V}, we have by H\"{o}lder's inequality that $(\| V_m(\bxi) \|_{H^j(\bT)})^{\ell} \in L_{\bmu_{\infty}}^q(U)$ and $\| (\| V_m(\bxi) \|_{H^j(\bT)})^{\ell} \|_{L_{\bmu_{\infty}}^q(U)}$ can be uniformly bounded in $m$ for any $1 \le q < \infty$. Therefore, $K_1(\bxi) \in L_{\bmu_{\infty}}^q(U)$ and $\| K_1 \|_{L_{\bmu_{\infty}}^q(U)}$ can be bounded uniformly in $m$ and $\tau \in (0, T]$ for any $1 \le q < \infty$. Some additional calculations give
		\begin{align*}
			\int_0^{\tau} \| \psi_m(t_{n - 1} + \rho) - \psi_m(t_{n - 1}) \|_{H^{s - 2}(\bT)} = & \int_0^{\tau} \left\| \int_0^{\rho} \partial_t \psi_m(t_{n - 1} + y) \rd y \right\|_{H^{s - 2}(\bT)} \rd \rho \\
			\le & \tau^2 \sup_{0 \le \rho \le \tau} \| \partial_t \psi_m(t_{n - 1} + \rho) \|_{H^{s - 2}(\bT)}.
		\end{align*}
		Therefore, we have
		\begin{align*}
			\| \psi_m(t_n, \bxi) - \Psi^\mathrm{k}_{\tau} \circ \Psi^\mathrm{p}_{\tau}(\psi_m(t_{n - 1}, \bxi), \bxi) \|_{H^{s - 2}(\bT)} \le K_2(\bxi) \tau^2,
		\end{align*}
		where
		\begin{align*}
			K_2(\bxi) = & \tau^2 \Big( (C_{\mathrm{a}, s - 2})^3 K_1(\bxi) \| V_m(\bxi) \|_{H^{s - 2}(\bT)}^2 \| \psi_m(\bxi) \|_{L^{\infty}((0, T), H^{s - 2}(\bT))} \\
			& \qquad + C_{\mathrm{a}, s} \| V_m(\bxi) \|_{H^s(\bT)} \| \psi_m(\bxi) \|_{L^{\infty}((0, T), H^s(\bT))} \\
			& \qquad + C_{\mathrm{a}, s - 2} \| V_m(\bxi) \|_{H^{s - 2}(\bT)} \| \partial_t \psi_m(\bxi) \|_{L^{\infty}((0, T), H^{s - 2}(\bT))} \Big).
		\end{align*}
		By Lemmas \ref{lem: integrability of V}, \ref{lem: well-posedness of psi m} and \ref{lem: regularity of time derivative} and H\"{o}lder's inequality, $K_2(\bxi) \in L_{\bmu_{\infty}}^q(U)$ and $\| K_2 \|_{L_{\bmu_{\infty}}^q(U)}$ can be bounded uniformly in $m$ and $\tau$ for any $1 \le q < \infty$.
		
		Finally, we have
		\begin{align*}
			& \| \psi_m(t_n, \bxi) - \psi_m^n(\bxi) \|_{H^{s - 2}(\bT)} \\
			& \qquad \le \| \psi_m(t_n, \bxi) - \Psi^\mathrm{k}_{\tau} \circ \Psi^\mathrm{p}_{\tau}(\psi_m(t_{n - 1}, \bxi), \bxi) \|_{H^{s - 2}(\bT)} \\
			& \qquad \quad+ \| \Psi^\mathrm{k}_{\tau} \circ \Psi^\mathrm{p}_{\tau}(\psi_m(t_{n - 1}, \bxi), \bxi) - \Psi^\mathrm{k}_{\tau} \circ \Psi^\mathrm{p}_{\tau}(\psi_m^{n - 1}(\bxi), \bxi) \|_{H^{s - 2}(\bT)} \\
			& \qquad \le \re^{C_{\mathrm{a}, s - 2} \tau \| V_m(\bxi) \|_{H^{s - 2}(\bT)}} \| \psi_m(t_{n - 1}, \bxi) - \psi_m^{n - 1}(\bxi) \|_{H^{s - 2}(\bT)} + K_2(\bxi) \tau^2 \\
			& \qquad \le \re^{C_{\mathrm{a}, s - 2} T \| V_m(\bxi) \|_{H^{s - 2}(\bT)}} \| \psi_m(0, \bxi) - \psi_m^{0}(\bxi) \|_{H^{s - 2}(\bT)} + K_2(\bxi) \tau^2 \sum_{j = 0}^{n - 1} \re^{C_{\mathrm{a}, s - 2} j \tau \| V_m(\bxi) \|_{H^{s - 2}(\bT)}} \\
			& \qquad \le T \re^{C_{\mathrm{a}, s - 2} T \| V_m(\bxi) \|_{H^{s - 2}(\bT)}} K_2(\bxi) \tau,
		\end{align*}
		and we can obtain \eqref{eq: temporal error of solution} by Lemma \ref{lem: integrability of exp V m} and H\"{o}lder's inequality.
	\end{proof}
	
	\begin{lemma} \label{lem: temporal error of functional}
		Let Assumptions \ref{assp: general}--\ref{assp: summability of b} hold with $s \ge 3$. Then, for any $0 < \tau \le T$,
		\begin{align} \label{eq: temporal error of functional}
			| \E[ G( | \psi_m(t_n) |^2 ) ] - \E[ G( | \psi_m^n |^2 ) ] | \le C \tau, \quad 0 \le t_n = n \tau \le T,
		\end{align}
		where $C$ is independent of $m$ and $\tau$.
	\end{lemma}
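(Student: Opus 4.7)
The plan is to mirror the argument used for Lemma \ref{lem: dimension truncation of physical observable}, replacing the dimension truncation error for $\psi_m - \psi$ with the temporal error for $\psi_m(t_n) - \psi_m^n$ established in Lemma \ref{lem: temporal error of solution}.

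First I would use the algebraic property \eqref{eq: algebraic property} of $H^1(\bT)$ together with the factorization $|\psi_m(t_n)|^2 - |\psi_m^n|^2 = (\psi_m(t_n) + \psi_m^n)\overline{(\psi_m(t_n) - \psi_m^n)} + \psi_m^n(\overline{\psi_m(t_n) - \psi_m^n})$ (or equivalently, just expanding $|a|^2 - |b|^2$ as a product involving $a+b$ and $a-b$) to bound
\begin{align*}
\| |\psi_m(t_n)|^2 - |\psi_m^n|^2 \|_{H^1(\bT)} \le C \| \psi_m(t_n) + \psi_m^n \|_{H^1(\bT)} \| \psi_m(t_n) - \psi_m^n \|_{H^1(\bT)}.
\end{align*}
Then, since $G \in (H^1(\bT))'$ by Assumption \ref{assp: general}, applying $G$ and taking expectations gives
\begin{align*}
| \E[G(|\psi_m(t_n)|^2)] - \E[G(|\psi_m^n|^2)] | \le C \|G\|_{(H^1(\bT))'} \, \E\bigl[ \| \psi_m(t_n) + \psi_m^n \|_{H^1(\bT)} \| \psi_m(t_n) - \psi_m^n \|_{H^1(\bT)} \bigr].
\end{align*}

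Next I would apply the Cauchy--Schwarz inequality in the random space to split this product into the two factors $\| \psi_m(t_n) + \psi_m^n \|_{L_{\bmu_{\infty}}^2(U, H^1(\bT))}$ and $\| \psi_m(t_n) - \psi_m^n \|_{L_{\bmu_{\infty}}^2(U, H^1(\bT))}$. The first factor is bounded uniformly in $m, \tau, n$ by Lemma \ref{lem: well-posedness of psi m} (applied to $\psi_m(t_n)$) and Lemma \ref{lem: regularity of semidiscrete solution} (applied to $\psi_m^n$), together with the trivial embedding $H^s(\bT) \hookrightarrow H^1(\bT)$ since $s \ge 3$. The second factor is controlled by Lemma \ref{lem: temporal error of solution}: since $s \ge 3$ implies $s - 2 \ge 1$ and hence $H^{s-2}(\bT) \hookrightarrow H^1(\bT)$, we obtain
\begin{align*}
\| \psi_m(t_n) - \psi_m^n \|_{L_{\bmu_{\infty}}^2(U, H^1(\bT))} \le C \| \psi_m(t_n) - \psi_m^n \|_{L_{\bmu_{\infty}}^2(U, H^{s-2}(\bT))} \le C \tau.
\end{align*}
Combining these ingredients yields \eqref{eq: temporal error of functional}.

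The argument is essentially routine given the machinery already assembled; no genuinely new difficulty arises. The only point requiring mild care is the need for $s \ge 3$ (so that the temporal error bound of Lemma \ref{lem: temporal error of solution} controls the $H^1$-norm required by $G \in (H^1(\bT))'$); under Assumption \ref{assp: general} with $s \ge 3$ this is automatic.
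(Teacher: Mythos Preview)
Your proposal is correct and follows exactly the approach the paper intends: it explicitly states that the proof mirrors that of Lemma~\ref{lem: dimension truncation of physical observable}, using Lemmas~\ref{lem: well-posedness of psi m}, \ref{lem: regularity of semidiscrete solution}, and~\ref{lem: temporal error of solution}. Your observation that $s\ge 3$ is needed precisely so that the $H^{s-2}$ bound from Lemma~\ref{lem: temporal error of solution} controls the $H^1$-norm required by $G\in (H^1(\bT))'$ is the only subtlety, and you handle it correctly.
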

	
	\begin{proof}
		The proof uses Lemmas \ref{lem: well-posedness of psi m}, \ref{lem: regularity of semidiscrete solution} and \ref{lem: temporal error of solution}, and is similar to that of Lemma \ref{lem: dimension truncation of physical observable}. Hence, we omit it here.
	\end{proof}
	
	\subsection{QMC quadrature error}
	\label{subsec: QMC error}
	
	The analysis of the QMC quadrature error is closely related to the mixed first derivatives of $\psi_m^n$ with respect to $\xi_m$, whose growth as $|\bxi_m| \rightarrow \infty$ has a direct impact on the convergence rate of QMC. Let $\nu = (\nu_1, \nu_2, \ldots, \nu_m) \in \cJ := \{0, 1\}^m$ with $| \nu | = \sum_{j = 1}^m \nu_j$, $\cI_{\nu} = \{ j: \nu_j = 1 \}$, and $\cI_{\nu}^{\mathsf{c}} = \{ 1, 2, \ldots, m \} \setminus \cI_{\nu}$. The following notation is adopted to denote the mixed first derivative:
	\begin{align*}
		\partial^{\nu} \psi_m^n = \partial_{\xi_1}^{\nu_1}\ldots\partial_{\xi_m}^{\nu_m} \psi_m^n, \quad  \nu \in \cJ.
	\end{align*}
	
	Before proceeding further, we give bounds on the $L^2(\bT)$-norms of $g_n$ and $\partial_x g_n$ introduced in Remark \ref{rem: intermediate solution}.
	
	\begin{lemma} \label{lem: physical regularity of intermediate solution}
		Under Assumptions \ref{assp: general}--\ref{assp: summability of b}, we have for any $0 < \tau \le T$ and $\bxi \in U_a$
		\begin{align}
			\| g_n(\rho, \bxi) \|_{L^2(\bT)} = & \| \psi_{\mathrm{in}} \|_{L^2(\bT)}, \label{eq: physical regularity of intermediate solution} \\
			\| \partial_x g_n(\rho, \bxi) \|_{L^2(\bT)} \le & \| \partial_x \psi_{\mathrm{in}} \|_{L^2(\bT)} + T \| \partial_x V_m(\bxi) \|_{L^{\infty}(\bT)} \| \psi_{\mathrm{in}} \|_{L^2(\bT)}, \label{eq: physical regularity of partial x intermediate solution}
		\end{align}
		where $0 \le \rho \le \tau$ and $1 \le n \le \lfloor T / \tau \rfloor$.
	\end{lemma}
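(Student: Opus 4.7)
The plan is to use the explicit representation $g_n(\rho, x) = \re^{-\ri \rho V_m(\bxi, x)} \psi_m^{n-1}(\bxi, x)$ coming from Remark \ref{rem: intermediate solution} and the fact that $V_m$ is real-valued, which makes the pointwise multiplier $\re^{-\ri \rho V_m}$ a unimodular phase. Together with the fact that $\re^{\ri \tau \partial_x^2/2}$ is an $L^2$-isometry, this will give the $L^2$ identity immediately, and the $L^1$-in-time growth of $\partial_x$ by a one-step product-rule estimate plus induction on $n$.

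For \eqref{eq: physical regularity of intermediate solution}: since $V_m(\bxi, \cdot)$ is real-valued, $|\re^{-\ri \rho V_m(\bxi, x)}| = 1$ pointwise, so $|g_n(\rho, \bxi, x)| = |\psi_m^{n-1}(\bxi, x)|$ and therefore $\|g_n(\rho, \bxi)\|_{L^2(\bT)} = \|\psi_m^{n-1}(\bxi)\|_{L^2(\bT)}$. A straightforward induction using \eqref{eq: Lie splitting} — $\re^{-\ri \tau V_m}$ preserves $|\cdot|$ pointwise and $\re^{\ri \tau \partial_x^2/2}$ is an isometry on $L^2(\bT)$ — then gives $\|\psi_m^{n-1}(\bxi)\|_{L^2(\bT)} = \|\psi_{\mathrm{in}}\|_{L^2(\bT)}$, establishing \eqref{eq: physical regularity of intermediate solution}.

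For \eqref{eq: physical regularity of partial x intermediate solution}: I would differentiate the explicit representation to obtain
\begin{align*}
\partial_x g_n(\rho, \bxi, x) = \re^{-\ri \rho V_m(\bxi, x)} \partial_x \psi_m^{n-1}(\bxi, x) - \ri \rho \, \re^{-\ri \rho V_m(\bxi, x)} (\partial_x V_m)(\bxi, x)\, \psi_m^{n-1}(\bxi, x),
\end{align*}
so by the triangle inequality together with \eqref{eq: physical regularity of intermediate solution},
\begin{align*}
\|\partial_x g_n(\rho, \bxi)\|_{L^2(\bT)} \le \|\partial_x \psi_m^{n-1}(\bxi)\|_{L^2(\bT)} + \rho\, \|\partial_x V_m(\bxi)\|_{L^\infty(\bT)}\, \|\psi_{\mathrm{in}}\|_{L^2(\bT)}.
\end{align*}
Applied with the same one-step estimate to $\Psi^\mathrm{p}_\tau$ (and using that $\re^{\ri \tau \partial_x^2/2}$ preserves $\|\partial_x (\cdot)\|_{L^2(\bT)}$), this yields the recursion $\|\partial_x \psi_m^k(\bxi)\|_{L^2(\bT)} \le \|\partial_x \psi_m^{k-1}(\bxi)\|_{L^2(\bT)} + \tau\, \|\partial_x V_m(\bxi)\|_{L^\infty(\bT)}\, \|\psi_{\mathrm{in}}\|_{L^2(\bT)}$, which iterated from $k = 1$ to $k = n-1$ gives $\|\partial_x \psi_m^{n-1}(\bxi)\|_{L^2(\bT)} \le \|\partial_x \psi_{\mathrm{in}}\|_{L^2(\bT)} + (n-1)\tau\, \|\partial_x V_m(\bxi)\|_{L^\infty(\bT)}\, \|\psi_{\mathrm{in}}\|_{L^2(\bT)}$.

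Combining the two displays and using $(n-1)\tau + \rho \le n\tau \le T$ for $n \le \lfloor T/\tau \rfloor$ collapses the $\partial_x V_m$ contributions into a single $T$-factor, producing \eqref{eq: physical regularity of partial x intermediate solution}. There is no real obstacle here: everything is pointwise/linear in the real multiplier, and the only thing to keep track of is that the product-rule term is $O(\tau)$ at each step so that the total accumulation over $n$ steps is controlled by $T$ rather than a Gronwall-type exponential in $\|V_m\|$. This is in fact the key feature that distinguishes the $L^2$/$H^1$ bound on $g_n$ from the $H^s$-bounds of Lemma \ref{lem: regularity of semidiscrete solution}, and will be used later to extract the favorable polynomial (rather than exponential) growth of mixed derivatives in $\bxi$.
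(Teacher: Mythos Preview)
Your proof is correct and reaches the same one-step estimate and iteration as the paper, but the route to the one-step $\partial_x$ bound is slightly different. The paper differentiates the ODE \eqref{eq: intermediate solution} in $x$, multiplies by $\overline{\partial_x g_n}$, integrates over $\bT$, and takes the imaginary part to obtain an energy identity $\frac{1}{2}\frac{\rd}{\rd t}\|\partial_x g_n\|_{L^2}^2 = \mathrm{Im}\int_{\bT}\overline{\partial_x g_n}\,\partial_x V_m\, g_n$, from which Cauchy--Schwarz gives $\frac{\rd}{\rd t}\|\partial_x g_n\|_{L^2}\le \|\partial_x V_m\|_{L^\infty}\|g_n\|_{L^2}$ and hence the same increment $\rho\,\|\partial_x V_m\|_{L^\infty}\|\psi_{\mathrm{in}}\|_{L^2}$. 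You instead differentiate the closed-form $g_n(\rho)=\re^{-\ri\rho V_m}\psi_m^{n-1}$ directly and apply the triangle inequality, which is more elementary here. Both approaches then iterate identically via $\|\partial_x g_n(0)\|_{L^2}=\|\partial_x g_{n-1}(\tau)\|_{L^2}$. The paper's energy-estimate derivation, while a bit heavier for this lemma, is the template reused verbatim in Lemma \ref{lem: parametric regularity of semidiscrete in H1} for the mixed $\partial^\nu$ derivatives, where no closed-form product rule is as clean; your shortcut is fine for the present statement but does not carry over as directly to that later argument.
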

	
	\begin{proof}
		Fix $0 \le \rho \le \tau \le T, 1 \le n \le \lfloor T / \tau \rfloor$ and $\bxi \in U_a$. First, we have $g_n(\rho) = \re^{- \ri \rho V_m} \psi_m^{n - 1} = \re^{- \ri \rho V_m} \re^{\ri \tau \partial_x^2 / 2} g_{n - 1}(\tau)$, and thus
		\begin{align*}
			\| g_n(\rho) \|_{L^2(\bT)} = \| g_{n - 1}(\tau) \|_{L^2(\bT)} = \cdots = \| g_{1}(\tau) \|_{L^2(\bT)} = \| \psi_{\mathrm{in}} \|_{L^2(\bT)}.
		\end{align*}
		
		On the other hand, taking the partial derivative of \eqref{eq: intermediate solution} with respect to $x$, we have
		\begin{align*}
			\ri \partial_t \partial_x g_n = V_m (\partial_x g_n) + (\partial_x V_m) g_n.
		\end{align*}
		Multiplying the above equation by $\overline{\partial_x g_n}$, integrating it with respect to $x$ over $\bT$, and taking the imaginary part of it, we obtain
		\begin{align*}
			\frac{1}{2} \frac{\rd}{\rd t} \| \partial_x g_n \|_{L^2(\bT)}^2 = \mathrm{Im} \left( \int_{\bT} (\overline{ \partial_x g_n }) (\partial_x V_m) g_n \right).
		\end{align*}
		The Cauchy--Schwarz inequality gives
		\begin{align*}
			\| \partial_x g_n \|_{L^2(\bT)} \frac{\rd}{\rd t} \| \partial_x g_n \|_{L^2(\bT)} \le \| \partial_x g_n \|_{L^2(\bT)} \| \partial_x V_m \|_{L^\infty(\bT)} \| g_n \|_{L^2(\bT)}.
		\end{align*}
		Dropping the term of $\| \partial_x g_n \|_{L^2(\bT)}$ in the above equation, integrating it with respect to $t$ over $(0, \rho)$ and using \eqref{eq: physical regularity of intermediate solution}, we have
		\begin{align} \label{eq: partial x g zero}
			\| \partial_x g_n(\rho) \|_{L^2(\bT)} \le \| \partial_x g_n(0) \|_{L^2(\bT)} + \rho \| \partial_x V_m \|_{L^{\infty}(\bT)} \| \psi_{\mathrm{in}} \|_{L^2(\bT)}.
		\end{align}
		Note that $\| \partial_x \re^{\ri \tau \partial_x^2 / 2} f \|_{L^2(\bT)} = \| \partial_x f \|_{L^2(\bT)}$ for any $f \in H^1(\bT)$. Then, we deduce from \eqref{eq: partial x g zero}
		\begin{align} \label{eq: partial x g tau}
			\| \partial_x g_n(\rho) \|_{L^2(\bT)} \le \| \partial_x g_{n - 1}(\tau) \|_{L^2(\bT)} + \tau \| \partial_x V_m \|_{L^{\infty}(\bT)} \| \psi_{\mathrm{in}} \|_{L^2(\bT)}.
		\end{align}
		Finally, we obtain \eqref{eq: physical regularity of partial x intermediate solution} by iterating \eqref{eq: partial x g tau} from $n$ to $2$ and using \eqref{eq: partial x g zero} for $g_1$.
	\end{proof}
	
	Let $C_{T} = \max\{ 1, T \}$ and $\Upsilon_m(\bxi) = \max\{ 1, \| V_m(\bxi) \|_{W^{1, \infty}(\bT)} \}$. Now we give bounds on $\partial^{\nu} \psi_m^n$ and $\partial^{\nu} G(|\psi_m^n|^2)$.
	
	\begin{lemma} \label{lem: parametric regularity of semidiscrete in H1}
		Under Assumptions \ref{assp: general}--\ref{assp: summability of b}, we have for any $0 < \tau \le T$ and $\bxi \in U_a$
		\begin{align} \label{eq: parametric regularity of semidiscrete in H1}
			\| \partial^{\nu} \psi_m^n(\bxi) \|_{H^1(\bT)} \le |\nu|! 4^{|\nu| + 1} C_{T}^{|\nu| + 1} \| \psi_{\mathrm{in}} \|_{H^1(\bT)} \Upsilon_m(\bxi) \prod_{j \in \cI_{\nu}} b_j, \quad 1 \le n \le \lfloor T / \tau \rfloor.
		\end{align}
	\end{lemma}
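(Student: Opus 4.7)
My plan is induction on $|\nu|$, with the crucial move being to maintain two parallel inductive bounds rather than a single $H^1$ bound. Since $\psi_m^n = \re^{\ri\tau\partial_x^2/2}g_n(\tau)$ and the free Schr\"odinger propagator is an isometry on both $L^2(\bT)$ and on $\partial_x L^2(\bT)$, it suffices to control
\begin{equation*}
\tilde P_n^\nu(\bxi) := \sup_{\sigma\in[0,\tau]}\|\partial^\nu g_n(\sigma,\bxi)\|_{L^2(\bT)}, \qquad \tilde Q_n^\nu(\bxi) := \sup_{\sigma\in[0,\tau]}\|\partial_x\partial^\nu g_n(\sigma,\bxi)\|_{L^2(\bT)}
\end{equation*}
for all $n\le\lfloor T/\tau\rfloor$, and then use $\|\partial^\nu\psi_m^n\|_{H^1}\le\tilde P_n^\nu + \tilde Q_n^\nu$. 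I will prove by induction that $\tilde P_n^\nu\le |\nu|!\,4^{|\nu|}C_T^{|\nu|}\|\psi_{\mathrm{in}}\|_{L^2(\bT)}\prod_{j\in\cI_\nu}b_j$, which crucially carries \emph{no} factor of $\Upsilon_m$, together with $\tilde Q_n^\nu \le |\nu|!\,4^{|\nu|+1}C_T^{|\nu|+1}\Upsilon_m(\bxi)\|\psi_{\mathrm{in}}\|_{H^1(\bT)}\prod_{j\in\cI_\nu}b_j$. For $\nu = 0$ both bounds follow directly from Lemma \ref{lem: physical regularity of intermediate solution} after using $1+T\Upsilon_m\le 2C_T\Upsilon_m$.

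For the inductive step with $|\nu|\ge 1$, differentiate the subflow ODE $\ri\partial_t g_n = V_m g_n$ in $\bxi$. Since $V_m$ is linear in $\bxi$ (so $\partial_{\xi_j}V_m = \lambda_j v_j$ and all higher $\xi$-derivatives vanish), the Leibniz rule produces
\begin{equation*}
\ri\partial_t\partial^\nu g_n = V_m\partial^\nu g_n + \sum_{j\in\cI_\nu}\lambda_j v_j\partial^{\nu-e_j}g_n, \qquad \partial^\nu g_n(0) = \partial^\nu\psi_m^{n-1}.
\end{equation*}
The standard energy trick---multiply by $\overline{\partial^\nu g_n}$, integrate over $\bT$, take imaginary parts, use that $V_m$ is real---gives $\frac{d}{dt}\|\partial^\nu g_n\|_{L^2}\le\sum_{j\in\cI_\nu}b_j\|\partial^{\nu-e_j}g_n\|_{L^2}$, and the same argument applied to $\partial_x\partial^\nu g_n$ (after first differentiating the ODE in $x$) produces an analogous inequality with the additional sources $\|\partial_x V_m\|_{L^\infty}\|\partial^\nu g_n\|_{L^2}$ and $\sum_{j\in\cI_\nu}b_j\|\partial^{\nu-e_j}g_n\|_{L^2}$, exploiting $\lambda_j\|\partial_x v_j\|_{L^\infty}\le b_j$. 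Integrating on $[0,\tau]$ and using that $\re^{\ri\tau\partial_x^2/2}$ preserves both $\|\cdot\|_{L^2}$ and $\|\partial_x\cdot\|_{L^2}$ to pass from $g_n(0) = \psi_m^{n-1}$ back to $g_{n-1}(\tau)$ yields one-step recursions for $\tilde P_n^\nu$ and $\tilde Q_n^\nu$.

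Iterating the recursions over all $n$ closes the induction, since $\partial^\nu\psi_{\mathrm{in}}\equiv 0$ for $|\nu|\ge 1$ kills the homogeneous contribution. Using $n\tau\le C_T$, $\|\partial_x V_m\|_{L^\infty}\le\Upsilon_m$, the two inductive hypotheses, and the combinatorial identity $\sum_{j\in\cI_\nu}b_j\prod_{j'\in\cI_\nu\setminus\{j\}}b_{j'} = |\nu|\prod_{j\in\cI_\nu}b_j$ (which is precisely what promotes $(|\nu|-1)!$ to $|\nu|!$), both parallel bounds close with constants strictly smaller than those claimed, and summing $\tilde P_n^\nu + \tilde Q_n^\nu$ then delivers the lemma. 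The main obstacle is exactly this $\Upsilon_m$-bookkeeping: a naive single $H^1$ induction carrying one factor of $\Upsilon_m$ would force the cross term $\|\partial_x V_m\|_{L^\infty}\|\partial^\nu g_n\|_{L^2}$ to pick up an extra $\Upsilon_m$ at each of the $|\nu|$ inductive levels, producing $\Upsilon_m^{|\nu|+1}$ rather than $\Upsilon_m$. Decoupling the $\Upsilon_m$-free $L^2$ estimate from the $\partial_x L^2$ estimate and propagating them in tandem is what keeps $\Upsilon_m$ at the first power and makes the final bound sharp.
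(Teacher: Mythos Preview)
Your proposal is correct and essentially identical to the paper's proof: both maintain two parallel inductive bounds on $\|\partial^\nu g_n\|_{L^2}$ (carrying no factor of $\Upsilon_m$) and $\|\partial_x\partial^\nu g_n\|_{L^2}$ (carrying exactly one $\Upsilon_m$), derive one-step recursions from energy estimates on the subflow ODE via the Leibniz rule (using that $\partial^\mu V_m=0$ for $|\mu|\ge2$), and iterate in $n$ using that the free propagator is an $L^2$- and $\partial_x L^2$-isometry. The paper's $L^2$ bound is slightly sharper than yours---it reads $\|\partial^\nu g_n\|_{L^2}\le|\nu|!\,C_T^{|\nu|}\|\psi_{\mathrm{in}}\|_{L^2}\prod_{j\in\cI_\nu}b_j$ with no $4^{|\nu|}$ factor---and this tightening is what makes the final sum $\|\partial^\nu g_n\|_{L^2}+\|\partial_x\partial^\nu g_n\|_{L^2}$ land exactly on the claimed constant $4^{|\nu|+1}$ (your stated constants would overshoot by $5/4$, though as you note the induction itself closes with slack).
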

	
	\begin{proof}
		Fix $T > 0$ and $\bxi \in U_a$. We shall prove that for any $0 \le n \le \lfloor T / \tau \rfloor$ and $0 \le \rho \le \tau$
		\begin{align}
			\| \partial^{\nu} g_n(\rho, \bxi) \|_{L^2(\bT)} & \le |\nu|! C_{T}^{|\nu|} \| \psi_{\mathrm{in}} \|_{L^2(\bT)} \prod_{j \in \cI_{\nu}} b_j, \label{eq: parametric regularity of psi} \\
			\| \partial_x \partial^{\nu} g_n(\rho, \bxi) \|_{L^2(\bT)} & \le |\nu|! 4^{|\nu|} C_{T}^{|\nu| + 1} \| \psi_{\mathrm{in}} \|_{H^1(\bT)} \Upsilon_m(\bxi) \prod_{j \in \cI_{\nu}} b_j, \label{eq: parametric regularity of partial x psi}
		\end{align}
		and then \eqref{eq: parametric regularity of semidiscrete in H1} follows from the above two equations and the relations that $\| \partial^{\nu} \psi_{m}^{n} \|_{L^2(\bT)} = \| \partial^{\nu} g_n(\tau) \|_{L^2(\bT)}$ and $\| \partial_x \partial^{\nu} \psi_{m}^{n} \|_{L^2(\bT)} = \| \partial_x \partial^{\nu} g_n(\tau) \|_{L^2(\bT)}$ for any $|\nu| \ge 1$, which are due to the fact that $\partial^{\nu} \psi_{m}^{n} = \re^{\ri \tau \partial_x^2 / 2} \partial^{\nu} g_n(\tau)$. Now we prove \eqref{eq: parametric regularity of psi} and \eqref{eq: parametric regularity of partial x psi} by induction on $|\nu|$.
		
		For $|\nu| = 1$, we differentiate \eqref{eq: intermediate solution} with respect to $\xi_j$ and obtain
		\begin{align*}
			\ri \partial_t \partial_j g_n = V_m (\partial_j g_n) + (\partial_j V_m) g_n,
		\end{align*}
		where we write $\partial_j = \partial_{\xi_j}$ for short. Similar arguments to the proof of \eqref{eq: physical regularity of partial x intermediate solution} in Lemma \ref{lem: physical regularity of intermediate solution} give
		\begin{align*}
			\| \partial_j g_n(\rho) \|_{L^2(\bT)} \le & \| \partial_j g_n(0) \|_{L^2(\bT)} + \int_0^{\rho} \| \partial_j V_m \|_{L^{\infty}(\bT)} \| g_n(y) \|_{L^2(\bT)} \rd y \\
			\le & \| \partial_j g_{n - 1}(\tau) \|_{L^2(\bT)} + \tau \lambda_j \| v_j \|_{L^{\infty}(\bT)} \| \psi_{\mathrm{in}} \|_{L^2(\bT)},
		\end{align*}
		where we have used \eqref{eq: physical regularity of intermediate solution} and the fact that $g_n(0) = \psi_m^{n - 1} = \re^{\ri \tau \partial_x^2 / 2} g_{n - 1}(\tau)$ in the second inequality. We obtain \eqref{eq: parametric regularity of psi} for $|\nu| = 1$ by iterating the above equation from $n$ to $2$ and the fact that $\partial_j g_1(0) = \partial_j \psi_{\mathrm{in}} = 0$. Then, differentiating \eqref{eq: intermediate solution} with respect to $\xi_j$ and $x$, we have
		\begin{align*}
			\ri \partial_t \partial_x \partial_j g_n = V_m (\partial_x \partial_j g_n) + (\partial_x V_m) (\partial_j g_n) + (\partial_j V_m) (\partial_x g_n) + (\partial_x \partial_j V_m) g_n.
		\end{align*}
		Following a similar procedure, we obtain
		\begin{align*}
			\| \partial_x \partial_j g_n(\tau) \|_{L^2(\bT)} \le & \| \partial_x \partial_j g_n(0) \|_{L^2(\bT)} + \int_0^{\tau} \Big{(} \| \partial_x V_m \|_{L^{\infty}(\bT)} \| \partial_j g_n(y) \|_{L^2(\bT)} \\
			& + \| \partial_j V_m \|_{L^{\infty}(\bT)} \| \partial_x g_n(y) \|_{L^2(\bT)} + \| \partial_x \partial_j V_m \|_{L^{\infty}(\bT)} \| g_n(y) \|_{L^2(\bT)}\Big{)} \rd y \\
			\le & \| \partial_x \partial_j g_{n - 1}(\tau) \|_{L^2(\bT)} + C_T \tau b_j \| \partial_x V_m \|_{L^{\infty}(\bT)} \| \psi_{\mathrm{in}} \|_{L^2(\bT)} \\
			& + \tau \lambda_j \| v_j \|_{L^{\infty}(\bT)} \| \partial_x \psi_{\mathrm{in}} \|_{L^2(\bT)} + T \tau \lambda_j \| v_j \|_{L^{\infty}(\bT)} \| \partial_x V_m \|_{L^{\infty}(\bT)} \| \psi_{\mathrm{in}} \|_{L^2(\bT)} \\
			& + \tau \lambda_j \| \partial_x v_j \|_{L^{\infty}(\bT)} \| \psi_{\mathrm{in}} \|_{L^2(\bT)} \\
			\le & \| \partial_x \partial_j g_{n - 1}(\tau) \|_{L^2(\bT)} + 4 \tau C_T \|
			\psi_{\mathrm{in}} \|_{H^1(\bT)} \Upsilon_m b_j,
		\end{align*}
		where we have used Lemma \ref{lem: physical regularity of intermediate solution}, \eqref{eq: parametric regularity of psi} for $|\nu| = 1$ and the fact that $g_n(0) = \psi_m^{n - 1} = \re^{\ri \tau \partial_x^2 / 2} g_{n - 1}(\tau)$ in the second inequality. We obtain \eqref{eq: parametric regularity of partial x psi} for $|\nu| = 1$ by iterating the above equation from $n$ to $2$ and the fact that $\partial_x \partial_j g_1(0) = \partial_x \partial_j \psi_{\mathrm{in}} = 0$.

		For $|\nu| \ge 2$, the Leibniz rule gives
		\begin{align*}
			\ri \partial_t \partial^{\nu} g_n = & \sum_{\mu \le \nu} \binom{{\nu}}{\mu} (\partial^{{\nu}-{\mu}}V_m) (\partial^{\mu} g_n) \\
            = & V_m (\partial^{\nu} g_n) + \sum_{j \in \{j: \nu_j = 1\}} (\partial_j V_m) (\partial^{\nu - e_j} g_n) = V_m (\partial^{\nu} g_n) + \sum_{j \in \{j: \nu_j = 1\}} \lambda_j v_j (\partial^{\nu - e_j} g_n),
        \end{align*}
        where $\le$ denotes the component-wise order relation for multi-indices, i.e., $\mu \le \nu \iff  \mu_j \le \nu_j$ for all $j$, $\binom{\nu}{\mu}=\prod_{j=1}^{m}\binom{\nu_j}{\mu_j}$, $e_j$ is the $j$th unit basis vector, and the second equality is due to the fact that $\partial^{\nu} V_m = 0$ for $|\nu| \ge 2$. We can deduce in the aforementioned way that
        \begin{align}
            \| \partial^{\nu} g_n (\rho) \|_{L^2(\bT)} \le \| \partial^{\nu} g_{n - 1} (\tau) \|_{L^2(\bT)} + \sum_{j \in \{j: \nu_j = 1\}} \int_0^{\tau} b_j \| \partial^{\nu - e_j} g_n(y) \|_{L^2(\bT)} \rd y. \label{eq: intermediate high estimate of psi m}
        \end{align}
        Note that $|\nu - e_j| = |\nu| - 1$ and $\#\{ j: \nu_j = 1 \} = |\nu|$. Then, inserting the assumption \eqref{eq: parametric regularity of psi} for $\partial^{\nu - e_j} g_n$ into \eqref{eq: intermediate high estimate of psi m}, we can obtain \eqref{eq: parametric regularity of psi} for $|\nu|$ by iterating \eqref{eq: intermediate high estimate of psi m} from $n$ to $2$. Similarly, for $ \partial_x \partial^{\nu} g_n$, we have
        \begin{align*}
			\ri \partial_t \partial_x \partial^{\nu} g_n = & \sum_{\mu \le \nu} \binom{{\nu}}{\mu} ( (\partial^{{\nu}-{\mu}} V_m) (\partial_x \partial^{\mu} g_n) + (\partial_x \partial^{{\nu}-{\mu}} V_m) (\partial^{\mu} g_n) ) \\
            = & V_m (\partial_x \partial^{\nu} g_n) + (\partial_x V_m) (\partial^{\nu} g_n) + \sum_{j \in \{j: \nu_j = 1\}} ( (\partial_j V_m) (\partial_x \partial^{\nu - e_j} g_n) + (\partial_x \partial_j V_m) (\partial^{\nu - e_j} g_n) ) \\
            = & V_m (\partial_x \partial^{\nu} g_n) + (\partial_x V_m) (\partial^{\nu} g_n) + \sum_{j \in \{j: \nu_j = 1\}} ( \lambda_j v_j (\partial_x \partial^{\nu - e_j} g_n) + (\lambda_j \partial_x v_j) (\partial^{\nu - e_j} g_n) ).
		\end{align*}
		We can then deduce that
		\begin{align}
			\| \partial_x  \partial^{\nu} g_n(\rho) \|_{L^2(\bT)} \le & \| \partial_x  \partial^{\nu} g_{n - 1}(\tau) \|_{L^2(\bT)} + \int_0^{\tau} \Bigg{(} \| \partial_x V_m \|_{L^{\infty}(\bT)} \| \partial^{\nu} g_n(y) \|_{L^2(\bT)} \nonumber \\
			& + \sum_{j \in \{j: \nu_j = 1\}} \big{(} b_j \| \partial_x \partial^{\nu - e_j} g_n(y) \|_{L^2(\bT)} + b_j \| \partial^{\nu - e_j} g_n(y) \|_{L^2(\bT)} \big{)} \Bigg{)} \rd y. \label{eq: intermediate high estimate of partial x psi m}
		\end{align}
		Finally, inserting \eqref{eq: parametric regularity of psi} for $\partial^{\nu} g_n, \partial^{\nu - e_j} g_n$ and the assumption \eqref{eq: parametric regularity of partial x psi} for $\partial_x \partial^{\nu - e_j} g_n$ into \eqref{eq: intermediate high estimate of partial x psi m}, we obtain
        \begin{align*}
            \| \partial_x \partial^{\nu} g_n(\rho) \|_{L^2(\bT)} \le & \| \partial_x  \partial^{\nu} g_{n - 1}(\tau) \|_{L^2(\bT)} + \tau |\nu|! \| \psi_{\mathrm{in}} \|_{L^2(\bT)} \prod_{j \in \cI_{\nu}} b_j \\
            & \times (C_{T}^{|\nu|} \| \partial_x V_m \|_{L^{\infty}(\bT)} + 4^{|\nu| - 1} C_{T}^{|\nu|} \Upsilon_m(\bxi) + C_{T}^{|\nu| - 1}) \\
            \le & \| \partial_x  \partial^{\nu} g_{n - 1}(\tau) \|_{L^2(\bT)} + \tau |\nu|! 4^{|\nu|} C_{T}^{|\nu|} \| \psi_{\mathrm{in}} \|_{H^1(\bT)} \Upsilon_m(\bxi) \prod_{j \in \cI_{\nu}} b_j.
        \end{align*}
		We obtain \eqref{eq: parametric regularity of partial x psi} for $|\nu|$ by iterating the above equation from $n$ to $2$.
	\end{proof}
	
	\begin{lemma} \label{lem: parametric regularity of functional}
		Under Assumptions \ref{assp: general}--\ref{assp: summability of b}, we have for any $T > 0$ and $\bxi \in U_a$
		\begin{align} \label{eq: parametric regularity of functional}
			\vert \partial^{\nu} G \left( |\psi_m^n(\bxi)|^2 \right) \vert \leq C_{\mathrm{a}, 1} (|\nu| + 1)! 4^{|\nu| + 2} C_{T}^{|\nu| + 2}  \| G \|_{H^1(\bT)'} \| \psi_{\mathrm{in}} \|_{H^1(\bT)}^2 (\Upsilon_m(\bxi))^2 \prod_{j \in \cI_{\nu}} b_j,
		\end{align}
		where $1 \le n \le \lfloor T / \tau \rfloor$.
	\end{lemma}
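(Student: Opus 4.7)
The plan is to apply $G$ (viewed as an element of $(H^1(\bT))'$) to $\partial^{\nu}(|\psi_m^n|^2)$, and then bound the $H^1$-norm of the differentiated product via the Leibniz rule together with the algebraic property of $H^1(\bT)$ and Lemma \ref{lem: parametric regularity of semidiscrete in H1}.

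First, since $G$ is linear and continuous on $H^1(\bT)$, I can pull the derivative inside and write
\begin{align*}
\partial^{\nu} G(|\psi_m^n|^2) = G\bigl(\partial^{\nu}(\psi_m^n \overline{\psi_m^n})\bigr),
\end{align*}
so that $|\partial^{\nu} G(|\psi_m^n|^2)| \le \|G\|_{H^1(\bT)'} \|\partial^{\nu}(\psi_m^n \overline{\psi_m^n})\|_{H^1(\bT)}$. Next, the Leibniz rule gives
\begin{align*}
\partial^{\nu}(\psi_m^n \overline{\psi_m^n}) = \sum_{\mu \preceq \nu} \binom{\nu}{\mu}\, \partial^{\mu}\psi_m^n \cdot \partial^{\nu-\mu}\overline{\psi_m^n},
\end{align*}
and since $\nu \in \{0,1\}^m$, each $\binom{\nu}{\mu} = 1$, and the sets $\cI_{\mu}$ and $\cI_{\nu-\mu}$ are disjoint with union $\cI_{\nu}$. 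The algebraic property \eqref{eq: algebraic property} of $H^1(\bT)$ then yields
\begin{align*}
\|\partial^{\nu}(\psi_m^n \overline{\psi_m^n})\|_{H^1(\bT)} \le C \sum_{\mu \preceq \nu} \|\partial^{\mu}\psi_m^n\|_{H^1(\bT)} \|\partial^{\nu-\mu}\psi_m^n\|_{H^1(\bT)},
\end{align*}
using $\|\overline{f}\|_{H^1} = \|f\|_{H^1}$.

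Substituting the bound from Lemma \ref{lem: parametric regularity of semidiscrete in H1} to each factor, and using $\prod_{j \in \cI_{\mu}} b_j \cdot \prod_{j \in \cI_{\nu-\mu}} b_j = \prod_{j \in \cI_{\nu}} b_j$, the sum factors as
\begin{align*}
C \|\psi_{\mathrm{in}}\|_{H^1(\bT)}^2 (\Upsilon_m(\bxi))^2 \, 4^{|\nu|+2} C_T^{|\nu|+2} \prod_{j \in \cI_{\nu}} b_j \sum_{\mu \preceq \nu} |\mu|!\,|\nu-\mu|!.
\end{align*}
The remaining combinatorial identity is routine: grouping $\mu \preceq \nu$ by $k = |\mu|$ contributes $\binom{|\nu|}{k}$ terms, so
\begin{align*}
\sum_{\mu \preceq \nu} |\mu|!\,|\nu-\mu|! = \sum_{k=0}^{|\nu|} \binom{|\nu|}{k} k!\,(|\nu|-k)! = (|\nu|+1)!,
\end{align*}
which produces exactly the $(|\nu|+1)!$ factor claimed in \eqref{eq: parametric regularity of functional}.

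There is no serious obstacle here; the lemma is essentially a post-processing of Lemma \ref{lem: parametric regularity of semidiscrete in H1}. The only mildly delicate point is tracking the constants to match the stated bound (in particular verifying that the algebraic-property constant can be absorbed into the generic $C$ and that the $4^{|\nu|+2} C_T^{|\nu|+2}$ prefactor is consistent after multiplying the two $H^1$ bounds from Lemma \ref{lem: parametric regularity of semidiscrete in H1}); the disjointness of $\cI_{\mu}$ and $\cI_{\nu-\mu}$, which is specific to the binary index set $\cJ = \{0,1\}^m$, is what makes the weight-product combine cleanly into $\prod_{j\in\cI_{\nu}} b_j$ rather than producing a larger combinatorial explosion.
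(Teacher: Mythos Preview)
Your proof is correct and follows the same approach as the paper: pull $\partial^{\nu}$ through the linear functional $G$, bound by $\|G\|_{H^1(\bT)'}\|\partial^{\nu}|\psi_m^n|^2\|_{H^1(\bT)}$, apply the Leibniz rule with the algebraic property of $H^1(\bT)$, and insert the bound from Lemma~\ref{lem: parametric regularity of semidiscrete in H1}. The paper simply states that \eqref{eq: parametric regularity of functional} follows from this last step, whereas you have made the constant tracking and the identity $\sum_{\mu\preceq\nu}|\mu|!\,|\nu-\mu|!=(|\nu|+1)!$ explicit.
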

	
	\begin{proof}
		Fix $T > 0$ and $\bxi \in U_a$. For $1 \le n \le \lfloor T / \tau \rfloor$, we have
		\begin{align*}
			\vert \partial^{\nu} G \left( |\psi_m^n|^2 \right) \vert = \vert G \left( \partial^{\nu} |\psi_m^n|^2 \right) \vert \le \Vert G \Vert_{H^1(\bT)'} \| \partial^{\nu} |\psi_m^n|^2 \|_{H^1(\bT)}.
		\end{align*}
		By the algebraic property of $H^1(\bT)$, we find
		\begin{align*}
			\vert \partial^{\nu} G \left( |\psi_m^n(\bxi)|^2 \right)  \vert \leq & \Vert G \Vert_{H^1(\bT)'} \sum_{{\mu} \le {\nu}} C_{\mathrm{a}, 1}
			\left\|\partial^{{\nu}-{\mu}} \overline{\psi_m^n}\right\|_{H^1(\bT)} \|\partial^{\mu}\psi_m^n\|_{H^1(\bT)}.
		\end{align*}
		We obtain \eqref{eq: parametric regularity of functional} from the above equation by Lemma \ref{lem: parametric regularity of semidiscrete in H1}.
	\end{proof}
	
	Lemma \ref{lem: parametric regularity of functional} shows that $\vert \partial^{\nu} G \left( |\psi_m^n(\bxi)|^2 \right) \vert$ grows at most quadratically in $|\bxi_m|$. For the analysis of QMC quadrature error, we work with the non-standard weighted unanchored Sobolev space $\mathcal{W}_{m,\gamma, w}:=\{F(\xi): \bR^m \to \bR\  |\ \|F\|_{\cW_{m,\gamma, w}}<\infty\}$ \cite{nichols2014fast} with the norm
	\begin{align} \label{eq: definition of norm of weighted space}
		\|F\|_{\cW_{m,\gamma, w}} := \left[ \sum_{\nu \in \cJ} \gamma_\nu^{-1}
		\int_{\bR^{|\nu|}} \left(\int_{\bR^{m-|\nu|}} \partial^\nu F(\xi) \prod_{j \in \cI_{\nu}^{\mathsf{c}}} \phi(\xi_j) \rd \xi_{\nu}^{\mathsf{c}}\right)^2 \prod_{j \in \cI_{\nu}} w_j(\xi_j)^{2} \rd\xi_{\nu} \right]^{1/2},
	\end{align}
	where by the notation, $\xi$ is split into the active part $\xi_{\nu}$ for differentiation and the inactive part $\xi_{\nu}^{\mathsf{c}}$, i.e., $\xi_{\nu}$ consists of $\xi_j$ with $j \in \cI_{\nu}$ and $\xi_{\nu}^{\mathsf{c}}$ consists of $\xi_k$ with $k \in \cI_{\nu}^{\mathsf{c}}$. The notation $\gamma_\nu$ denotes the product and order-dependent (POD) weight parameter, i.e., $\gamma_\nu = \Gamma_{|\nu|} \prod_{j \in \cI_{\nu}}\gamma_j$ with $\gamma_{(0, \ldots, 0)}=1$ for some sequences $\Gamma_0 = \Gamma_1 = 1, \Gamma_2, \ldots$ and $\gamma_1 \ge \gamma_2 \ge \ldots > 0$, which will be chosen in Lemma \ref{lem: QMC quadrature error}, and $\gamma:=\{\gamma_{\nu}\,|\, \nu\in\cJ\}$. The function $w_j$ in \eqref{eq: definition of norm of weighted space} is a positive continuous decaying weight function that serves to counteract the growth of $| \partial^{\nu} G ( |\psi_m^n(\bxi)|^2| ) |$, and $w := \{w_j: j = 1, \ldots, m\}$.
	The quadratic growth of the bound in \eqref{eq: parametric regularity of functional} allows us to choose the favorable weight functions in the form of
	\begin{align} \label{eq: weight function}
		w_j(\xi) = \exp \left( - \theta_j |\xi| \right),
	\end{align}
	which can lead to a dimension-independent and almost linear convergence rate of QMC \cite{kuo2010randomly,nichols2014fast}; see Lemmas \ref{lem: worst error bound}--\ref{lem: QMC quadrature error}.
	Here, we assume that $\max\{ b_j, \theta_{\min} \} < \theta_j < \theta_{\max}$ for $j \in \bN^{+}$ and some constants $0 < \theta_{\min} < \theta_{\max} < \infty$, and we will specify $\theta_j$ in Lemma \ref{lem: QMC quadrature error}. We also let $\cD = \inf_{j \in \bN}(\theta_j - b_j)$.
	
	Recall the randomly shifted QMC lattice rule \eqref{eq: QMC for F} with the generating vector $\bz$ and the random shift $\bDelta$, and we have from \cite{graham2015quasi,nichols2014fast} that for a general $F \in \cW_{m, \gamma, w}$
	\begin{align} \label{eq: worst case error}
		\sqrt{\mathbb{E}^{\bDelta}\left[\Big|\E[F] - Q_{m,N}(F; \bDelta)\Big|^2 \right]} \le e_{m, N}^{\mathrm{sh}}(\bz) \| F \|_{\cW_{m, \gamma, w}},
	\end{align}
	where $e_{m, N}^{\mathrm{sh}}(\bz)$ is referred to as the shift averaged worst case error. Note that $\| F \|_{\cW_{m, \gamma, w}}$ does not depend on $\bz$, and thus we can construct $\bz$ by making $e_{m, N}^{\mathrm{sh}}(\bz)$ as small as possible. Moreover, we know from \cite{nichols2014fast} that $e_{m, N}^{\mathrm{sh}}(\bz)$ depends on the weight functions $w$ and the POD weight parameters $\gamma$, and that once $w$ and $\gamma$ are chosen a generating vector $\bz$ can be constructed by the CBC algorithm. That is, we set $z_1 = 1$ and then determine $z_j$ for $j = 2, \ldots, m$ sequentially by minimizing $e_{j, N}^{\mathrm{sh}}(z_1, \ldots, z_{j - 1}, z_j)$ over $z_j \in \{ z \in \bN: 1 \le z \le N - 1, \mathrm{gcd}(z, N) = 1 \}$. We refer to \cite{nichols2014fast} for the explicit expression of $e_{m, N}^{\mathrm{sh}}(\bz)$ and more details on the CBC algorithm. More importantly, it is proved in \cite{nichols2014fast} that we can obtain almost linear convergence in $N$ for $e_{m, N}^{\mathrm{sh}}(\bz)$ with the standard multivariate Gaussian distribution and our choice of weight functions in \eqref{eq: weight function}. We present a relevant result from \cite[Theorem 15]{graham2015quasi} in Lemma \ref{lem: worst error bound}. Based on it, we prove the dimension-independent and almost linear convergence rate in $N$ of the QMC-TS scheme in Lemma \ref{lem: QMC quadrature error}.
	
	\begin{lemma}[{\cite[Theorem 15]{graham2015quasi}}]
		\label{lem: worst error bound}
		Let $m, N \in \bN^{+}$, the weight parameters $\gamma$ be fixed, the weight functions $w$ be given in the form of \eqref{eq: weight function}, and $F \in \cW_{m,\gamma, w}$. Then, there exists a randomly shifted lattice rule \eqref{eq: QMC for F} that can be constructed by the CBC algorithm such that
		\begin{align} \label{eq: worst error bound}
			\sqrt{\mathbb{E}^{\bDelta} \left[ |\E[F] - Q_{m,N}(F; \bDelta)|^2 \right]}
			\leq \left[ \sum_{\nu \in \cJ \setminus \{ (0, \ldots, 0) \}}\gamma_\nu^\lambda
			\prod_{j \in \cI_{\nu}} \varrho_j(\lambda)
			\right]^{\frac{1}{2\lambda}}
			\varphi_{\mathrm{tot}}(N)^{-\frac{1}{2\lambda}}\|F\|_{W_{m,\gamma, w}},
		\end{align}
		for any $\lambda \in (\frac{1}{2},1]$, with
		\begin{align} \label{eq: expression of varrho}
			\varrho_j(\lambda) = 2 \left( \frac{ \sqrt{2\pi} \exp( \theta_j^2 / \eta_* ) }{
				\pi^{2 - 2 \eta_*} (1 - \eta_*) \eta_* } \right)^{\lambda} \zeta \left( \lambda + \frac{1}{2} \right), \quad \eta_* = \frac{2 \lambda - 1}{4 \lambda}
		\end{align}
		where $\zeta$ is the Riemann zeta function and $\varphi_{\mathrm{tot}}$ is the Euler totient function with the property that $1/\varphi_{\mathrm{tot}}(N) \le 9 / N$ for $N \le 10^{30}$.
	\end{lemma}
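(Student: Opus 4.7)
The plan is to derive \eqref{eq: worst error bound} by combining the standard reproducing-kernel/worst-case-error framework for randomly shifted rank-1 lattice rules in weighted Sobolev spaces with an explicit calculation of the one-dimensional integrals produced by the exponential weight functions of \eqref{eq: weight function} against the Gaussian density. First, I would invoke the classical identity that, for any $F\in\cW_{m,\gamma,w}$ and any randomly shifted lattice rule with generating vector $\bz$,
\[
\sqrt{\mathbb{E}^{\bDelta}\bigl[|\E[F] - Q_{m,N}(F;\bDelta)|^{2}\bigr]} \;\le\; e_{m,N}^{\mathrm{sh}}(\bz)\,\|F\|_{\cW_{m,\gamma,w}},
\]
so the entire task reduces to bounding $e_{m,N}^{\mathrm{sh}}(\bz)$. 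A Parseval-type expansion in the tensorized reproducing kernel of $\cW_{m,\gamma,w}$ yields an explicit series representation of $(e_{m,N}^{\mathrm{sh}}(\bz))^{2}$ indexed by $\nu\in\cJ$, in which the $\nu$-th summand factorizes over $j\in\cI_{\nu}$ into a Bernoulli-kernel piece on the unit cube and a one-dimensional integral involving $w_{j}$ and $\phi$.

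Second, I would run the CBC construction of \cite{nichols2014fast}: set $z_{1}=1$ and choose each subsequent $z_{j}$ greedily to minimize $e_{j,N}^{\mathrm{sh}}$ over $\{z\in\bN:1\le z\le N-1,\ \mathrm{gcd}(z,N)=1\}$. A standard Jensen-type averaging argument over the primitive residues modulo $N$, together with induction on the dimension, then produces, for every $\lambda\in(1/2,1]$, a bound of the form
\[
\bigl(e_{m,N}^{\mathrm{sh}}(\bz)\bigr)^{2} \;\le\; \frac{1}{\varphi_{\mathrm{tot}}(N)^{1/\lambda}}\left[\sum_{\nu\in\cJ\setminus\{(0,\dots,0)\}} \gamma_{\nu}^{\lambda} \prod_{j\in\cI_{\nu}} \varrho_{j}(\lambda)\right]^{1/\lambda},
\]
where $\varrho_{j}(\lambda)$ splits as $\zeta(\lambda+\tfrac{1}{2})$ from the Bernoulli kernel, times a univariate integral essentially of the form $\int_{\bR}\bigl(\phi(\xi)/w_{j}(\xi)^{2}\bigr)^{\eta}\,\rd\xi$ with a H\"older exponent $\eta\in(0,1)$ still to be optimized. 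Taking a square root then matches the structure of \eqref{eq: worst error bound}, provided $\varrho_{j}(\lambda)$ can be identified in closed form.

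The technical core, and the step I expect to be the main obstacle, is the closed-form evaluation of that one-dimensional integral for $w_{j}(\xi)=\exp(-\theta_{j}|\xi|)$. Substituting, the integrand becomes $\phi(\xi)^{\eta}\exp(2\eta\theta_{j}|\xi|)$, which after completing the square in the exponent reduces to a shifted Gaussian tail that evaluates in closed form; majorizing it via \eqref{eq: property of density function} produces the factor $\sqrt{2\pi}\exp(\theta_{j}^{2}/\eta_{*})$ seen in \eqref{eq: expression of varrho}. The remaining factor $\pi^{2-2\eta_{*}}(1-\eta_{*})\eta_{*}$ originates from the Bernoulli-kernel normalization constants, and balancing the Gaussian-tail blowup of $\exp(2\eta\theta_{j}|\xi|)$ against the integrability constraint $\phi^{\eta}\in L^{1}$ forces precisely $\eta_{*}=(2\lambda-1)/(4\lambda)$. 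Finally, the estimate $1/\varphi_{\mathrm{tot}}(N)\le 9/N$ for $N\le 10^{30}$ is a classical number-theoretic fact and can be imported verbatim. The delicate part of the argument is not conceptual but bookkeeping: one must align the H\"older exponent used in the weight-function integral with the exponent $\lambda$ produced by the averaging step of CBC, so that the optimizer $\eta_{*}$ emerges consistently and all constants reassemble into the stated form of $\varrho_{j}(\lambda)$.
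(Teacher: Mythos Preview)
The paper does not supply its own proof of this lemma: it is quoted verbatim as \cite[Theorem~15]{graham2015quasi} and used as a black box, so there is nothing in the paper to compare against beyond the citation. Your sketch is a faithful reconstruction of the argument behind that cited result (the reproducing-kernel worst-case-error identity, the CBC induction with Jensen averaging from \cite{nichols2014fast}, and the explicit evaluation of the one-dimensional Gaussian/exponential-weight integral yielding $\varrho_j(\lambda)$), and the outline is correct; the only minor slip is that the Gaussian integral producing $\sqrt{2\pi}\exp(\theta_j^2/\eta_*)$ is computed directly by completing the square rather than by invoking \eqref{eq: property of density function}, which in this paper is reserved for the proof of Lemma~\ref{lem: QMC quadrature error}.
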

	
	\begin{lemma} \label{lem: QMC quadrature error}
		Let Assumptions \ref{assp: general}--\ref{assp: summability of b} hold. If Assumption \ref{assp: summability of b} holds with $p = 1$, we additionally assume that
		\begin{align} \label{eq: additional assumption}
			\sum_{j = 1}^{\infty} b_j < \frac{\cD^{1/2}}{4 C_T \varrho_{\max}(1)^{1/2}},
		\end{align}
		where $\varrho_{\max}(\lambda)$ is defined by replacing $\theta_j$ in \eqref{eq: expression of varrho} by $\theta_{\max}$ and recall that $\cD = \inf_{j \in \bN}(\theta_j - b_j)$. Choose the weight parameters
		\begin{align} \label{eq: choice of weight parameter}
			\gamma_{\nu} = \left( C_{|\nu|} \prod_{j \in \cI_{\nu}} \frac{b_j^2}{(\theta_j - b_j)\varrho_j(\lambda^*)} \right)^{1 / (1 + \lambda^*)},
		\end{align}
		where $C_{|\nu|} = ((|\nu| + 1)!)^2 (4 C_T)^{2|\nu|}$,
		\begin{equation} \label{eq: choice of convergence rate}
			\lambda^* = \left\{
			\begin{aligned}
				& \frac{1}{2 - 2 \delta}, & & \text{ when } 0 < p \le \frac{2}{3}, \\
				& \frac{p}{2 - p}, & & \text{ when } \frac{2}{3} < p \le 1,
			\end{aligned}
			\right.
		\end{equation}
		with arbitrary $\delta \in (0, 1/2]$, and $\theta_j$ is the parameter in the weight function $w_j(\xi_j)$ in \eqref{eq: weight function} with
		\begin{align} \label{eq: choice of parameters in weight functions}
			\theta_j = \frac{1}{2} \left( b_j + \sqrt{b_j^2 + 1 - \frac{1}{2 \lambda^*}} \right), \quad j \in \bN^+.
		\end{align}
		Then, there exists a randomly shifted lattice rule \eqref{eq: QMC for F} that can be constructed by the CBC algorithm such that for $N \le 10^{30}$
		\begin{equation} \label{eq: QMC quadrature error}
			\sqrt{\mathbb{E}^{\bDelta} \left[ |\E[G(|\psi_m^n|^2)] - Q_{m,N}(G(|\psi_m^n|^2); \bDelta)|^2 \right]} \le
			\left\{
			\begin{aligned}
				& C N^{-(1 - \delta)}, & & \text{ when } 0 < p \le \frac{2}{3}, \\
				& C N^{-(1/p - 1/2)}, & & \text{ when } \frac{2}{3} < p \le 1,
			\end{aligned}
			\right.
		\end{equation}
		where $C$ is independent of $m, \tau, N$, but depends on $p$ and, when relevant, $\delta$.
	\end{lemma}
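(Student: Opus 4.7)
The plan is to apply Lemma \ref{lem: worst error bound} with $F(\bxi_m) = G(|\psi_m^n(\bxi_m)|^2)$ and $\lambda = \lambda^*$, which reduces the task to (i) bounding $\|F\|_{\cW_{m,\gamma,w}}$ uniformly in $m$ and (ii) making the prefactor $\left[\sum_\nu \gamma_\nu^\lambda \prod_{j \in \cI_\nu} \varrho_j(\lambda)\right]^{1/(2\lambda)}$ finite under the prescribed POD weights. For (i) I would start from the pointwise estimate of Lemma \ref{lem: parametric regularity of functional} combined with the elementary inequality $\Upsilon_m(\bxi) \le 1 + \|v_0\|_{W^{1,\infty}(\bT)} + \sum_k b_k|\xi_k|$, yielding $|\partial^\nu F(\bxi)|^2 \le A^2\, C_{|\nu|}\, (\Upsilon_m(\bxi))^4 \prod_{j \in \cI_\nu} b_j^2$ with $A$ a constant depending on $T, \psi_{\mathrm{in}}, G$. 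Substituting into \eqref{eq: definition of norm of weighted space}, integrating the inactive coordinates against $\phi$ (which contributes only uniformly bounded Gaussian moments of the polynomial factors coming from $(\Upsilon_m)^2$) and then the active coordinates against $\prod_{j \in \cI_\nu} w_j(\xi_j)^2 = \prod \exp(-2\theta_j|\xi_j|)$ produces absolutely convergent one-dimensional integrals of the form $\int_\bR (1+b_j|\xi|)^4 \exp(-2\theta_j|\xi|)\,\rd\xi$, each controlled by a constant multiple of $(\theta_j - b_j)^{-1}$ thanks to $\theta_j > b_j$. After routine bookkeeping this delivers
\begin{align*}
\|F\|_{\cW_{m,\gamma,w}}^2 \le K \sum_{\nu \in \cJ} \gamma_\nu^{-1}\, C_{|\nu|} \prod_{j \in \cI_\nu} \frac{b_j^2}{\theta_j - b_j},
\end{align*}
with $K$ independent of $m$.

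Combining this bound with \eqref{eq: worst error bound} yields a product of two $\nu$-sums; applying AM--GM termwise in $\nu$ shows that the product $\gamma_\nu^\lambda \prod \varrho_j(\lambda) \cdot \gamma_\nu^{-1} C_{|\nu|} \prod b_j^2/(\theta_j - b_j)$ is minimized precisely by the POD weight \eqref{eq: choice of weight parameter}, after which both sums collapse into one:
\begin{align*}
\sqrt{\mathbb{E}^{\bDelta}[|\E[F]-Q_{m,N}(F;\bDelta)|^2]} \le C\, N^{-1/(2\lambda)} \left[\sum_{\nu \in \cJ \setminus \{(0, \ldots, 0)\}} C_{|\nu|}^{\lambda/(1+\lambda)} \prod_{j \in \cI_\nu} \left(\frac{b_j^2\, \varrho_j(\lambda)}{\theta_j - b_j}\right)^{\lambda/(1+\lambda)}\right]^{(1+\lambda)/(2\lambda)}.
\end{align*}
The choice \eqref{eq: choice of parameters in weight functions} of $\theta_j$ is the one that balances the exponential factor $\exp(\theta_j^2/\eta_*)$ hidden in $\varrho_j(\lambda^*)$ against $1/(\theta_j - b_j)$, producing a uniform per-coordinate bound $b_j^2 \varrho_j(\lambda^*)/(\theta_j - b_j) \le C b_j^2$. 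To evaluate the bracketed $\nu$-sum I would group by $|\nu|=k$ and apply $\sum_{|\nu|=k}\prod_{j\in\cI_\nu} c_j \le (\sum_j c_j)^k/k!$ (with a Stechkin-type refinement when $p<1$), turning it into a series of the form $\sum_k ((k+1)!)^{2\lambda^*/(1+\lambda^*)} (4C_T)^{2k\lambda^*/(1+\lambda^*)} (\sum_j c_j)^k / k!$. Convergence requires $2\lambda^*/(1+\lambda^*) \le p$, i.e.\ $\lambda^* \le p/(2-p)$; for $p \in (2/3,1]$ this threshold lies in $(1/2,1]$ so one takes $\lambda^* = p/(2-p)$, while for $p \in (0, 2/3]$ the admissibility constraint $\lambda^* > 1/2$ of Lemma \ref{lem: worst error bound} saturates and one takes $\lambda^* = 1/(2 - 2\delta)$. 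Reading off $N^{-1/(2\lambda^*)}$ then reproduces the two rates in \eqref{eq: QMC quadrature error}.

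The main obstacle will be the last step: the factor $C_{|\nu|} = ((|\nu|+1)!)^2 (4C_T)^{2|\nu|}$ grows super-exponentially in $|\nu|$, and the $\nu$-sum converges only after the exponent $\lambda^*/(1+\lambda^*)$ reduces the factorial enough to be absorbed by the $1/k!$ produced by the elementary-symmetric bound. The algebraic matching of this combined factorial against the $p$-summability of $(b_j)$ is precisely what pins down the threshold $p=2/3$ and the explicit formula for $\lambda^*$. In the borderline case $p=1$ the series reduces to a ratio-test borderline, and the hypothesis \eqref{eq: additional assumption}, which forces $\sum_j b_j$ to be strictly smaller than $\cD^{1/2}/(4 C_T \varrho_{\max}(1)^{1/2})$, is exactly the smallness needed to convert the otherwise divergent sum into a convergent geometric-type series.
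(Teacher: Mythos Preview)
Your overall strategy coincides with the paper's: bound $\|G(|\psi_m^n|^2)\|_{\cW_{m,\gamma,w}}$ from Lemma~\ref{lem: parametric regularity of functional}, feed the result into Lemma~\ref{lem: worst error bound}, then optimise $\gamma_\nu$ and pick $\lambda^*$ exactly as in \cite[Theorem~20, Corollary~21]{graham2015quasi}. The one substantive difference is your treatment of the factor $(\Upsilon_m(\bxi))^2$. The paper does \emph{not} keep it polynomial; it uses the elementary inequality $\max\{1,\rho^2\}\le e^\rho$ to replace $(\Upsilon_m)^2$ by the fully separable bound $e^{\|v_0\|_{W^{1,\infty}}}\prod_j e^{b_j|\xi_j|}$. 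This yields, after the inner Gaussian integration and squaring, exactly $\int_\bR e^{2b_j|\xi_j|}e^{-2\theta_j|\xi_j|}\,\rd\xi_j=(\theta_j-b_j)^{-1}$ for each active coordinate, with no extra multiplicative constant per factor, and hence precisely the displayed bound $K\sum_\nu \gamma_\nu^{-1}C_{|\nu|}\prod_{j\in\cI_\nu} b_j^2/(\theta_j-b_j)$.

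Your polynomial route can be made to work, but the phrase ``each controlled by a constant multiple of $(\theta_j-b_j)^{-1}$'' hides a real issue: passing from $(1+\sum_k b_k|\xi_k|)^4$ to the separable product $\prod_{j\in\cI_\nu}(1+b_j|\xi_j|)^4$ and then bounding $\int_\bR(1+b_j|\xi|)^4 e^{-2\theta_j|\xi|}\,\rd\xi\le C/(\theta_j-b_j)$ introduces a \emph{universal} constant $C>1$ for every active coordinate. Taking the product over $j\in\cI_\nu$ turns this into $C^{|\nu|}$ inside the $\nu$-sum, so what you actually obtain is $K\sum_\nu \gamma_\nu^{-1}C_{|\nu|}C^{|\nu|}\prod_j b_j^2/(\theta_j-b_j)$. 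For $p\in(0,1)$ this is harmless: the super-exponential factorial decay you describe absorbs any geometric factor. For $p=1$, however, the final series is a ratio-test borderline, and the extra $C^{|\nu|}$ tightens the required smallness to $\sum_j b_j<\cD^{1/2}/(4C_T\sqrt{C}\,\varrho_{\max}(1)^{1/2})$, which is strictly stronger than \eqref{eq: additional assumption}. Since the lemma fixes both $\gamma_\nu$ and the constant in \eqref{eq: additional assumption}, your argument as written does not close the case $p=1$. The paper's exponential trick avoids this precisely because it produces $(\theta_j-b_j)^{-1}$ with unit constant.
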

	
	\begin{proof}
		With the elementary inequality $\max\{ 1, \rho^2 \} \le \exp(\rho)$ for $\rho \ge 0$, we can deduce from Lemma \ref{lem: parametric regularity of functional} that
		\begin{align*}
			\vert \partial^{\nu} G \left( |\psi_m^n(\bxi)|^2 \right) \vert \le \widetilde{C} (|\nu| + 1)! (4 C_T)^{|\nu|} \exp\left( \sum_{j = 1}^m b_j |\xi_j| \right) \prod_{j \in \cI_{\nu}} b_j, \quad 1 \le n \le \lfloor T / \tau \rfloor, \bxi \in U_a,
		\end{align*}
		where $\widetilde{C} = 16 C_{\mathrm{a}, 1} C_{T}^2 \| G \|_{H^1(\bT)'} \| \psi_{\mathrm{in}} \|_{H^1(\bT)}^2 \exp(\| v_0 \|_{W^{1, \infty}(\bT)})$. Then, using $\int_{\bR} \exp(2 b_j |\xi_j|) w_j^2(\xi_j) \rd \xi_j = 1 / (\theta_j - b_j)$ from the proof of \cite[Theorem 16]{graham2015quasi}, we obtain by the definition \eqref{eq: definition of norm of weighted space} of the $\cW_{m, \gamma, w}$-norm that
        \begin{align*}
            \| G \left( |\psi_m^n|^2 \right) \|_{\cW_{m, \gamma, w}}^2 \le & \widetilde{C}^2 \prod_{j=1}^m ( 2 \exp(b_j^2 / 2) \Phi(b_j) ) \left( \sum_{\nu \in \cJ} \gamma_\nu^{-1} C_{|\nu|} \prod_{j \in \cI_{\nu}} \frac{b_j^2}{2 \exp(b_j^2 / 2) \Phi(b_j) (\theta_j - b_j)} \right).
        \end{align*}
        Still from the proof of \cite[Theorem 16]{graham2015quasi}, we have $2 \exp( b_j^2 / 2) \Phi(b_j) \ge 1$ and $2 \exp(b_j^2 / 2) \Phi(b_j) \le \exp(b_j^2 / 2 + 2 b_j / \sqrt{2 \pi})$; recall also \eqref{eq: property of density function}. Then, we can deduce from the above equation that
        \begin{align}
			\| G \left( |\psi_m^n|^2 \right) \|_{\cW_{m, \gamma, w}}^2 \le (\breve{C})^2 \sum_{\nu \in \cJ} \gamma_\nu^{-1} C_{|\nu|} \prod_{j \in \cI_{\nu}} \frac{b_j^2}{\theta_j - b_j}, \label{eq: Sobolev norm of functional}
		\end{align}
		where $C_{|\nu|} = ((|\nu| + 1)!)^2 (4 C_T)^{2|\nu|}$ and $\breve{C} = \widetilde{C} \exp \left( \frac{1}{4} \sum_{j = 1}^{\infty} b_j^2 + \frac{1}{\sqrt{2 \pi}} \sum_{j = 1}^{\infty} b_j \right)$.
		
		The rest of the proof is similar to that of \cite[Theorem 20 and Corollary 21]{graham2015quasi}, and we only provide the main idea here. In view of Lemma \ref{lem: worst error bound} and \eqref{eq: Sobolev norm of functional}, to derive a dimension-independent bound on the QMC quadrature error, we need to bound the quantity
		\begin{align*}
			C_{\gamma, m} := \left( \sum_{\nu \in \cJ \setminus \{(0, \ldots, 0)\}}\gamma_\nu^\lambda
			\prod_{j \in \cI_{\nu}} \varrho_j(\lambda)
			\right)^{1/\lambda} \left( \sum_{\nu \in \cJ} \gamma_\nu^{-1} C_{|\nu|} \prod_{j \in \cI_{\nu}} \frac{b_j^2}{\theta_j - b_j} \right)
		\end{align*}
		independently of $m$. To this end, we first choose
		\begin{align*}
			\gamma_{\nu} = \left( C_{|\nu|} \prod_{j \in \cI_{\nu}} \frac{b_j^2}{(\theta_j - b_j)\varrho_j(\lambda)} \right)^{1 / (1 + \lambda)},
		\end{align*}
		which minimizes $C_{\gamma, m}$ for fixed $\theta_j$ and $\lambda$ by \cite[Lemma 6.2 and Theorem 6.4]{kuo2012quasi}. Then, we further find that $\lambda \in (1/2, 1]$ needs to be bounded from below, depending on the value of $p$ in Assumption \ref{assp: summability of b}. For $\lambda \in (1/2, 1)$, we need $\lambda \ge p / (2 - p)$. We want $\lambda$ to be as small as possible in view of \eqref{eq: worst error bound}. So we can choose $\lambda = \lambda^* = 1 / (2 - 2 \delta)$ for some $\delta \in (0, 1/2)$ when $p \in (0, 2/3]$ and $\lambda = \lambda^* = p / (2 - p)$ when $p \in (2/3, 1)$. When $p = 1$, we need to additionally assume \eqref{eq: additional assumption}, and then we can choose $\lambda = \lambda^* = 1$. Finally, the choice of $\{ \theta_j \}_{j = 1}^{\infty}$ in \eqref{eq: choice of parameters in weight functions} minimizes $C_{\gamma, m}$ given the above choices of $\gamma_{\nu}$ and $\lambda$. These choices of $\gamma_{\nu}, \lambda$ and $\{ \theta_j \}_{j = 1}^{\infty}$ give the error estimate \eqref{eq: QMC quadrature error} by Lemma \ref{lem: worst error bound}.
	\end{proof}
	
	Now we are ready to prove the main result presented in Section \ref{subsec: main result}.
	
	\begin{proof}[Proof of Theorem \ref{thm: main}]
		By the triangle inequality, we have
		\begin{align*}
			& \sqrt{\mathbb{E}^{\bDelta}\left[\Big|\E[G(|\psi(t_n)|^2)] - Q_{m,N}(G(|\psi_{m}^n|^2); \bDelta)\Big|^2 \right]} \\
			& \qquad \le | \E[G(|\psi(t_n)|^2)] - \E[G(|\psi_m(t_n)|^2)] | + | \E[G(|\psi_m(t_n)|^2)] - \E[G(|\psi_m^n|^2)] | \\
			& \qquad \quad + \sqrt{\mathbb{E}^{\bDelta} \left[ |\E[G(|\psi_m^n|^2)] - Q_{m,N}(G(|\psi_m^n|^2); \bDelta)|^2 \right]}.
		\end{align*}
		Then, we prove the theorem using Lemmas \ref{lem: dimension truncation of physical observable}, \ref{lem: temporal error of functional}, and \ref{lem: QMC quadrature error}.
	\end{proof}
	
	\subsection{Some discussions}
	\label{subsec: discussion}
	
	We end this section with a few comments on our main result.
	
	\begin{enumerate}
		\item The time discretization scheme has a direct impact on the performance of the randomly shifted lattice-based QMC quadrature rule. If the semi-discrete solution grows too fast in $|\bxi|$, we may have to choose weight functions that decay faster than \eqref{eq: weight function} and the resulting QMC quadrature may not achieve almost linear convergence \cite{kuo2010randomly,nichols2014fast}.
		
		\item Theorem \ref{thm: main} can be generalized to the $d$-space-dimensional case ($d \ge 1$) by additionally assuming that $G \in (H^r(\bT^d))^{\prime}$, $v_j \in W^{r, \infty}(\bT^d)$ for $j \in \bN^{+}$, $\sum_{j = 1}^{\infty} (b_{j, r})^p < \infty$ for some $p \in (0, 1]$ with $b_{j, r} = \lambda_j \|
		v_j \|_{W^{r, \infty}(\bT^d)}$, and $\| V_m - V \|_{L_{\bmu_{\infty}}^{2 + \varepsilon}(U, H^r(\bT^d))}\leq Cm^{-\chi}$ for some constants $C, \chi, \varepsilon$ independent of $m$, where $r > d / 2$. Then, the error estimate \eqref{eq: main estimate} still holds for QMC-TS.
		
		\item We can use a general splitting scheme of the form \eqref{eq: high-order splitting} for time discretization. In particular, we can adopt the Strang splitting \cite{strang1968construction}
		\begin{align} \label{eq: Strang splitting}
			\psi_m^{n + 1} = \Psi^\mathrm{k}_{\tau/2} \circ \Psi^\mathrm{p}_{\tau} \circ \Psi^\mathrm{k}_{\tau/2} (\psi_m^{n}) = \re^{\ri \tau \partial_x^2 / 4} \re^{- \ri \tau V_m} \re^{\ri \tau \partial_x^2 / 4} \psi_m^n, \quad n = 0, 1, \ldots,
		\end{align}
		which is known to be second-order and is one of the most popular splitting schemes. In this case, if we additionally assume that Assumptions \ref{assp: general}--\ref{assp: convergence of V} hold with $s \ge 5$, then under the conditions of Theorem \ref{thm: main} we can prove by following the analysis in \cite[Section 4]{su2020time} that the following error estimate holds for QMC-TS:
		\begin{align}
			\sqrt{\mathbb{E}^{\bDelta}\left[\Big|\E[G(|\psi(t_n)|^2)] - Q_{m,N}(G(|\psi_{m}^n|^2); \bDelta)\Big|^2
				\right]} \leq C
			(m^{-\chi} + \tau^{2} + N^{- \kappa}),
		\end{align}
		where $C$ is independent of $m, \tau, N$ and $\kappa$ is the same as in Theorem \ref{thm: main}.
		
		\item Following the third comment, for the Lie--Trotter splitting \eqref{eq: Lie splitting} (resp. the Strang splitting \eqref{eq: Strang splitting}), the time convergence of order $1$ (resp. order $2$) will be achieved as long as Assumption \ref{assp: general} holds with $s \ge 3$ (resp. $s \ge 5$), while the summability of $\{ a_j \}_{j = 1}^{\infty}$ with $s \ge 3$ (resp. $s \ge 5$) in Assumption \ref{assp: summability of b} guarantees that the time convergence is independent of $m$.
		
		\item Following the second and third comments, the QMC quadrature achieves the dimension-independent $\mathcal{O}(N^{- \kappa})$ convergence as long as $ \sum_{j = 1}^{\infty} (b_{j, r})^p < \infty$ for some $p \in (0, 1]$, regardless of the choice of the splitting scheme for time discretization and whether the summability of $\{ a_j \}_{j = 1}^{\infty}$ is satisfied.
		
		\item The fully discrete scheme can be obtained by combining the QMC-TS scheme with some spatial discretization, e.g.,  the Fourier pseudospectral method \cite{shen2011spectral}, the finite difference method \cite{markowich1999numerical,markowich2002wigner}, etc. The crucial step to obtain the QMC convergence rate for the fully discrete scheme is to derive a bound on the mixed first derivatives of the fully discrete solution with respect to $\bxi$. However, the spatial discretization would introduce a constant in the estimate of the fully discrete solution at each time step. This constant is typically larger than $1$ and thus would lead to blow up of the estimate as $n \rightarrow \infty$. But we believe that the QMC convergence rate in Theorem \ref{thm: main} would still hold provided that the spatial mesh size is sufficiently small; see the numerical examples in Section \ref{sec: numerical example}. We will address this issue in future works.

        \item Although we mainly consider a linear functional of the solution as the quantity of interest in this paper, we could possibly extend our results to Banach-space-valued quantity of interest as in \cite{guth2024parabolic}, where uniformly distributed random variables are considered. Let $G: U \rightarrow Z$ be a continuous Banach-space-valued mapping, where $Z$ is a separable Banach space. For our problem, $G$ could be $G(\bxi_m) = \psi_m^n(\bxi_m, \cdot), G(\bxi_m) =  |\psi_m(\bxi_m, \cdot)|^2$, etc. Using Hahn-Banach theorem \cite[Theorem 4.3]{rudin1991funcional}, the worst case error \eqref{eq: worst case error} can be extended to the Banach-space-valued mapping $G$; see \cite[Theorem 6.5]{guth2024parabolic}. Then, by deriving the parametric regularity bound on $\| \partial^{\nu} G \|_Z$ and combining the worst case error for Banach-space-valued mappings with the error bound of the CBC algorithm \eqref{eq: worst error bound}, we could derive the error bound for QMC applied to the Banach-space-valued mapping $G$; see \cite[Theorem 6.6]{guth2024parabolic}. We refer the reader to \cite[Section 6]{guth2024parabolic} for more details.

        \item We can also consider higher-order QMC methods developed in, e.g., \cite{dick2014higher,dick2016multilevel}, for our problem, and the convergence analysis of higher-order methods would depend on parametric regularity analysis for higher-order mixed partial derivatives. We would leave this for future study.

        \item A more challenging problem is the nonlinear Schr\"{o}dinger equation with a Gaussian random potential
        \begin{equation}\label{eq: NLS}
			\ri \partial_t\psi(t,\omega,x) = -\frac{1}{2} \partial_x^2 \psi(t,\omega,x) + V(\omega,x) \psi(t,\omega,x) + \alpha |\psi(t,\omega,x)|^2 \psi(t,\omega,x),
        \end{equation}
        where $V(\omega,x)$ is a Gaussian random field and $\alpha \neq 0$. The well-posedness of the solution $\psi$ is non-trivial due to the cubic nonlinearity. To be concrete, following the proof of Lemma \ref{lem: well-posedness of psi}, we need to prove that $\exp(C_{\mathrm{a}, 1} T \| \psi(t, \bxi) \|_{H^1(\bT)}^2)$ is $L^q$-integrable with respect to $\bxi$ in order to derive the integrability of $\psi$ in the random space. However, $\psi$ is typically non-Gaussian with respect to $\bxi$, and thus Fernique's theorem cannot be applied to prove this integrability. We will study this problem in the future.
	\end{enumerate}
	
	\section{Numerical examples}
	\label{sec: numerical example}
	
	In this section, we give some convergence tests of the QMC-TS method to verify our theoretical findings. To implement the QMC-TS method, we use the Fourier pseudospectral method with mesh size $h$ for spatial discretization, and the generating vector for QMC is constructed using the code script ``lat-cbc.py'' from \cite{QMC4PDE} (see also \cite{kuo2016application}). We consider two primary physical observables: the position density $S$ and the current density $J$, where
	\begin{align}
		S(t, \bxi, x) = & S(\psi(t, \bxi, x)) = | \psi(t, \bxi, x) |^2, \\
		J(t, \bxi, x) = & J(\psi(t, \bxi, x)) = \mathrm{Im}(\overline{\psi(t, \bxi, x)} \nabla \psi(t, \bxi, x)).
	\end{align}

    To measure the temporal error and dimension truncation error, we consider the $L^2$ relative errors for $S$ and $J$, i.e.,
	\begin{align}
		\mathrm{err}_{L^2}(S) = & \frac{\| \E_{\mathrm{num}}[S(\psi_{m, \mathrm{num}}(t))] - \E_{\mathrm{ref}}[S(\psi_{m, \mathrm{ref}}(t))] \|_{L^2(\bT)}}{\| \E_{\mathrm{ref}}[S(\psi_{m, \mathrm{ref}}(t))] \|_{L^2(\bT)}}, \label{eq: L2 error of S} \\
		\mathrm{err}_{L^2}(J) = & \frac{\| \E_{\mathrm{num}}[J(\psi_{m, \mathrm{num}}(t))] - \E_{\mathrm{ref}}[J(\psi_{m, \mathrm{ref}}(t))] \|_{L^2(\bT)}}{\| \E_{\mathrm{ref}}[J(\psi_{m, \mathrm{ref}}(t))] \|_{L^2(\bT)}}. \label{eq: L2 error of J}
	\end{align}
	Here, $\E_{\mathrm{ref}}[S(\psi_{m, \mathrm{ref}}(t, x))]$ and $\E_{\mathrm{ref}}[J(\psi_{m, \mathrm{ref}}(t, x))]$ are the reference solutions. Moreover, $\psi_{m, \mathrm{num}}$ is the numerical solution obtained by some numerical scheme in time and space for fixed $m$ and $\bxi_m$. For QMC as the sampling method, $\E_{\mathrm{num}}[F]$ for a general $F(\bxi_m)$ reads
	\begin{align*}
		\E_{\mathrm{num}}[F] = \overline{Q}_{m, N, R}(F) = \frac{1}{R} \sum_{k = 1}^R Q_{m, N}(F; \bDelta_k),
	\end{align*}
	where $Q_{m, N}(F; \bDelta_k)$ is defined in \eqref{eq: QMC for F} with $\bDelta_k$ the $k$-th independent random shift.

    On the other hand, to measure the sampling error, we consider the standard error, which is an unbiased estimator of the root mean square error (RMSE). The same idea can be found in \cite{gilbert2019analysis,graham2015quasi,wu2024error}. More precisely, for a general $F(\bxi_m)$, the standard error for QMC reads
	\begin{align} \label{eq: error proxy}
		\sqrt{ \frac{1}{R(R - 1)} \sum_{k = 1}^R \left( Q_{m, N}(F; \bDelta_k) - \overline{Q}_{m, N, R}(F) \right)^2 } \approx \sqrt{\E^{\bDelta}[ | \E[ F ] - Q_{m, N}(F; \bDelta) |^2 ]},
	\end{align}
	where the left-hand side is the standard error of $F$ and the right-hand side is RMSE of $F$ for QMC. We will also consider the sampling error of MC. For MC, the standard error reads \cite{dick2013high}
    \begin{align}
        \sqrt{ \frac{1}{N_{\mathrm{MC}} (N_{\mathrm{MC}} - 1)} \sum_{j = 1}^{N_{\mathrm{MC}}} ( F(\bxi_{m, \mathrm{MC}}^{(j)}) - Q^{\mathrm{MC}}_{m, N_{\mathrm{MC}}}(F) )^2 } \approx \sqrt{ \E[ | \E[F] - Q^{\mathrm{MC}}_{m, N_{\mathrm{MC}}}(F) |^2 ] },
    \end{align}
    where the left-hand side is the standard error of $F$ and the right-hand side is RMSE of $F$ for MC. Here, $\{ \bxi_{m, \mathrm{MC}}^{(j)} \}_{j = 1}^{N_{\mathrm{MC}}}$ are the MC sample points with $N_{\mathrm{MC}}$ the total number, and $Q^{\mathrm{MC}}_{m, N_{\mathrm{MC}}}(F) = \frac{1}{N_{\mathrm{MC}}} \sum_{j = 1}^{N_{\mathrm{MC}}} F(\bxi_{m, \mathrm{MC}}^{(j)})$.
    Let $S_T(x) = S(\psi_{m, \mathrm{num}}(t = T, \bxi_m, x))$ and $J_T(x) = J(\psi_{m, \mathrm{num}}(t = T, \bxi_m, x))$. We would consider the standard errors of $S_T(\frac{\pi}{4})$ and $J_T(\frac{\pi}{4})$ to measure the sampling error in all the following examples.

	The first example tests the convergence rates of different time-splitting schemes and sampling methods.
	
	\begin{example} \label{expl: comparison of convergence rates}
		Consider the Schr\"{o}dinger equation \eqref{eq: Schrodinger trun} with $\bT = [-\pi, \pi]$, $T = 1$, the initial data
		\begin{align} \label{eq: numerical initial condition}
			\psi_{\mathrm{in}}(x) = \sqrt{\frac{8}{\pi}} \exp(- 8 x^2),
		\end{align}
		and the random potential
		\begin{align} \label{eq: numerical potential 1}
			V_m(\bxi, x) = 1 + \sum_{j = 1}^m \frac{1}{j^{\frac{9}{2}}} \xi_j \cos(j x).
		\end{align}
		We consider $m = 4$.
	\end{example}
	For time convergence tests, the reference solutions are computed using the Strang splitting and the Fourier pseudospectral method combined with the stochastic collocation method, where we choose $\tau = 5 \times 10^{-5}, h = \frac{\pi}{128}$ and $20$ collocation points in each of the $m$ dimensions of $\bxi_m$. For numerical solutions, we use the Lie--Trotter and Strang splittings in QMC-TS, where we fix $R = 50, N = 2^{18}, h = \frac{\pi}{64}$ and choose $\tau = \frac{1}{40}, \frac{1}{80}, \frac{1}{160}, \frac{1}{320}, \frac{1}{640}$. By Point (4) in Section \ref{subsec: discussion}, we should observe first-order and second-order convergence in $\tau$ for the Lie--Trotter and Strang splittings, respectively. The numerical results are shown in Figure \ref{fig: smallD_time}, where the optimal convergence rates in time are observed for both splitting schemes.

	For tests of convergence in the number of samples, we compare QMC and MC as the sampling method, and we use the Strang splitting and the Fourier pseudospectral method for time and spatial discretization, respectively. We fix $\tau = 10^{-4}$ and $h = \frac{\pi}{64}$. For a fair comparison of QMC and MC, we let $N_{\mathrm{tot}} = N_{\mathrm{MC}} = R N$, and we choose $R = 50$ and $N = 2^{10}, 2^{11}, \ldots, 2^{16}$. The standard errors of $S_{T}(\frac{\pi}{4})$ and $J_{T}(\frac{\pi}{4})$ are shown in Figure \ref{fig: smallD_random}, which is plotted against $N_{\mathrm{tot}}$ for convenience. The convergence rates of the standard errors for QMC and MC, which are fitted using errors corresponding to $N = 2^{13}, 2^{14}, 2^{15}, 2^{16}$, are shown in Table \ref{tab: smallD}. We observe that the convergence rates of QMC are approximately linear, which is consistent with Point(5) in Section \ref{subsec: discussion}, while the convergence rates of MC are near the theoretical value $\frac{1}{2}$.
	
	\begin{figure}[t!]
		\centering
		\subcaptionbox{$L^2$ relative error of $S$ at $T = 1$}{\includegraphics[width=0.45\textwidth]{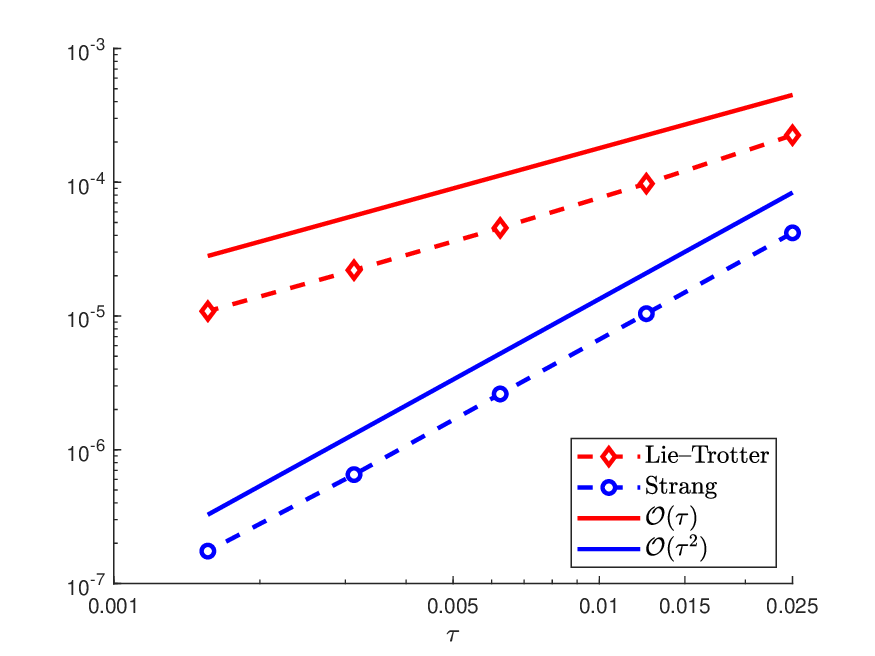}}
		\subcaptionbox{$L^2$ relative error of $J$ at $T = 1$}{\includegraphics[width=0.45\textwidth]{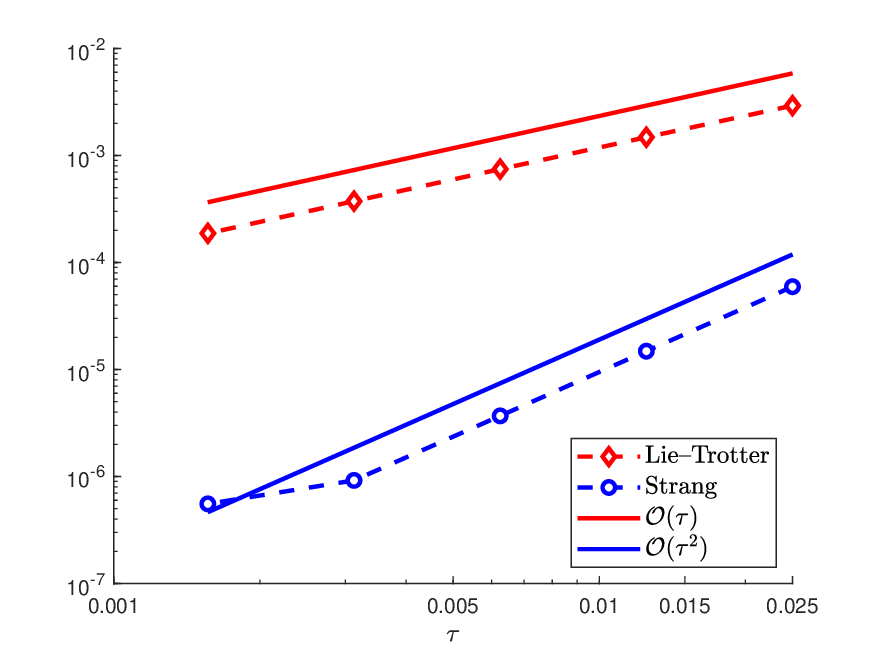}}
		\caption{Convergence in time for Example \ref{expl: comparison of convergence rates}\label{fig: smallD_time}}
	\end{figure}
	
	\begin{figure}[t!]
		\centering
		\subcaptionbox{Standard error of $S_{T}(\frac{\pi}{4})$ at $T = 1$}{\includegraphics[width=0.45\textwidth]{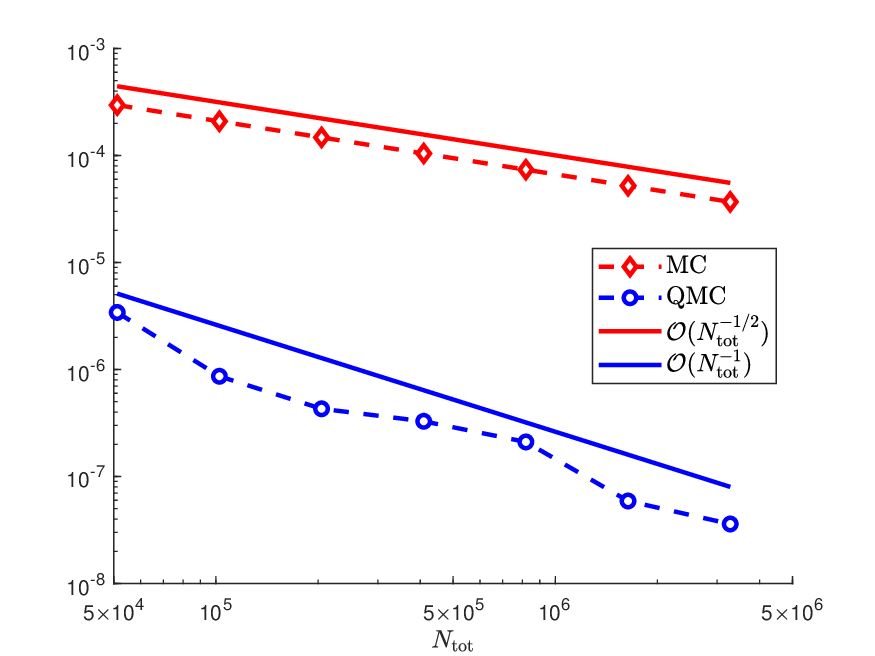}}
		\subcaptionbox{Standard error of $J_{T}(\frac{\pi}{4})$ at $T = 1$}{\includegraphics[width=0.45\textwidth]{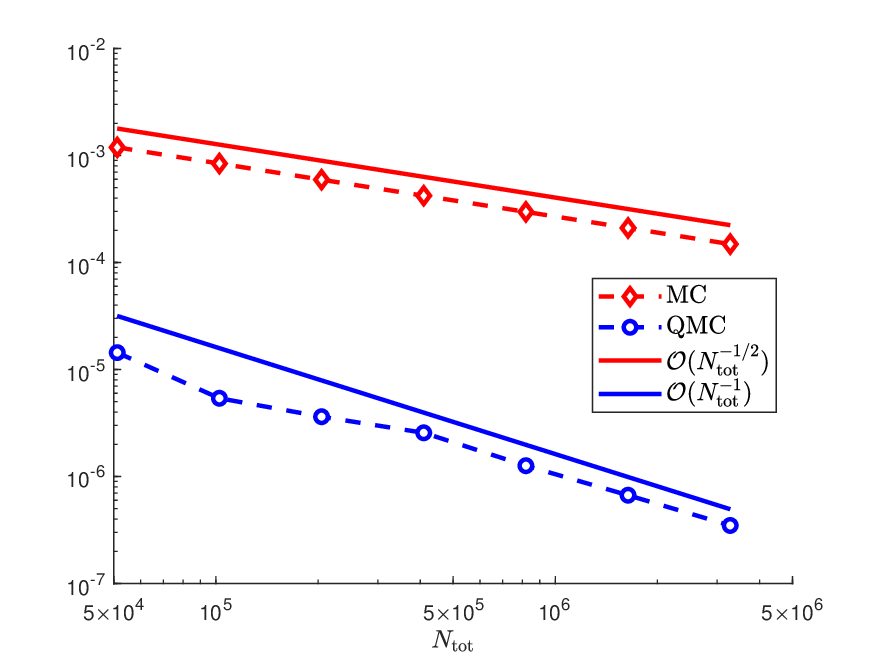}}
		\caption{Convergence in the number of samples for Example \ref{expl: comparison of convergence rates}\label{fig: smallD_random}}
	\end{figure}

	\begin{table}[t!]
		\begin{subtable}{0.45\linewidth}
			\centering
			\begin{tabular}{|c|c|c|}
				\hline
				& QMC & MC \\
				\hline
				$S_T(\frac{\pi}{4})$ & 1.1398 & 0.5004 \\
				$J_T(\frac{\pi}{4})$ & 0.9548 & 0.5007 \\
				\hline
			\end{tabular}
			\caption{Example \ref{expl: comparison of convergence rates}}
			\label{tab: smallD}
		\end{subtable}
		\begin{subtable}{0.45\linewidth}
			\centering
			\begin{tabular}{|c|c|c|c|}
				\hline
				& $m = 2$ & $m = 4$ & $m = 6$ \\
				\hline
				$S_T(\frac{\pi}{4})$ & 0.9338 & 1.1398 & 1.1107 \\
				$J_T(\frac{\pi}{4})$ & 1.0737 & 0.9548 & 1.0256 \\
				\hline
			\end{tabular}
			\caption{Example \ref{expl: dimension independence in small D}}
			\label{tab: smallmultiD_QMC}
		\end{subtable}
		\caption{Fitted convergence rates of standard errors for Examples \ref{expl: comparison of convergence rates}--\ref{expl: dimension independence in small D}}
	\end{table}
	
	The second example shows the dimension-independence of the convergence in time and the number of QMC samples.
	\begin{example} \label{expl: dimension independence in small D}
		Consider the Schr\"{o}dinger equation \eqref{eq: Schrodinger trun} with $\bT = [-\pi, \pi], T = 1$, the initial condition \eqref{eq: numerical initial condition} and the random potential \eqref{eq: numerical potential 1}. We consider $m = 2, 4, 6$.
	\end{example}
	For time convergence tests, the reference solutions are computed in the same way as in the previous example, and we use the Lie--Trotter splitting in QMC-TS for numerical solutions. We should observe dimension-independent and first-order convergence in time by Theorem \ref{thm: main}. We fix $R = 50, N = 2^{18}, h = \frac{\pi}{64}$ and choose $\tau = \frac{1}{40}, \frac{1}{80}, \frac{1}{160}, \frac{1}{320}, \frac{1}{640}$. The results are shown in Figure \ref{fig: smallmultiD_Lie}. The $L^2$ relative errors decay at a rate of $\mathcal{O}(\tau)$ and are nearly the same for different $m$, which is consistent with Theorem \ref{thm: main}.

	For convergence tests of QMC, we use the Strang splitting in QMC-TS. We should observe dimension-independent and almost linear convergence in the number of samples by Point (5) in Section \ref{subsec: discussion}. We fix $\tau = 10^{-4}, h = \frac{\pi}{64}$, and choose $R = 50$ and $N = 2^{10}, 2^{11}, \ldots, 2^{16}$. The standard errors of $S_{T}(\frac{\pi}{4})$ and $J_{T}(\frac{\pi}{4})$ are shown in Figure \ref{fig: smallmultiD_QMC}. The convergence rates of standard errors, which are fitted using errors corresponding to $N = 2^{13}, 2^{14}, 2^{15}, 2^{16}$, are shown in Table \ref{tab: smallmultiD_QMC}. The dimension-independence of the QMC quadrature error is confirmed by Figure \ref{fig: smallmultiD_QMC}, and Table \ref{tab: smallmultiD_QMC} shows that the convergence rates are all around $1$ for different $m$, which verifies our theories.
	
	\begin{figure}[t!]
		\centering
		\subcaptionbox{$L^2$ relative error of $S$ at $T = 1$}{\includegraphics[width=0.45\textwidth]{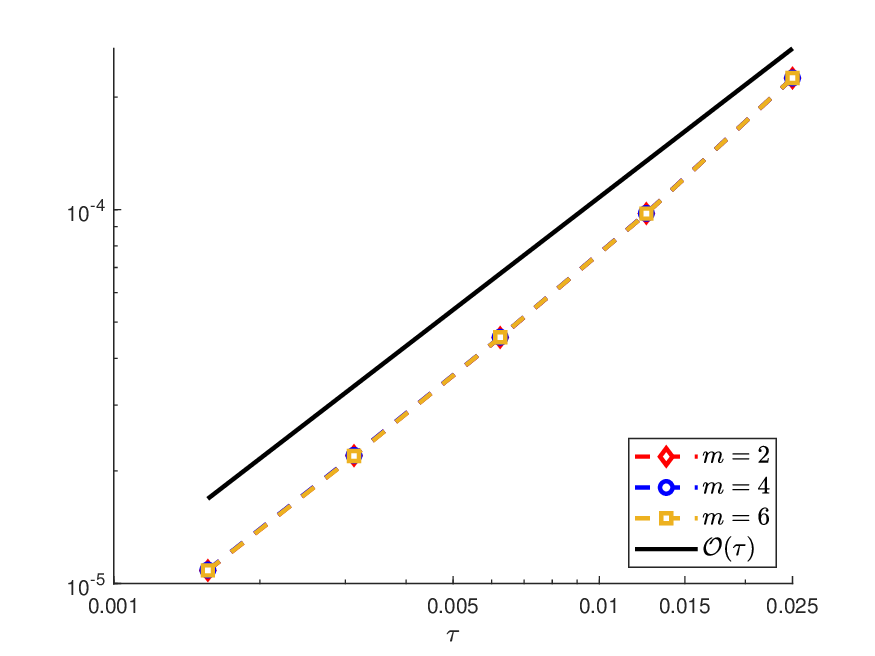}}
		\subcaptionbox{$L^2$ relative error of $J$ at $T = 1$}{\includegraphics[width=0.45\textwidth]{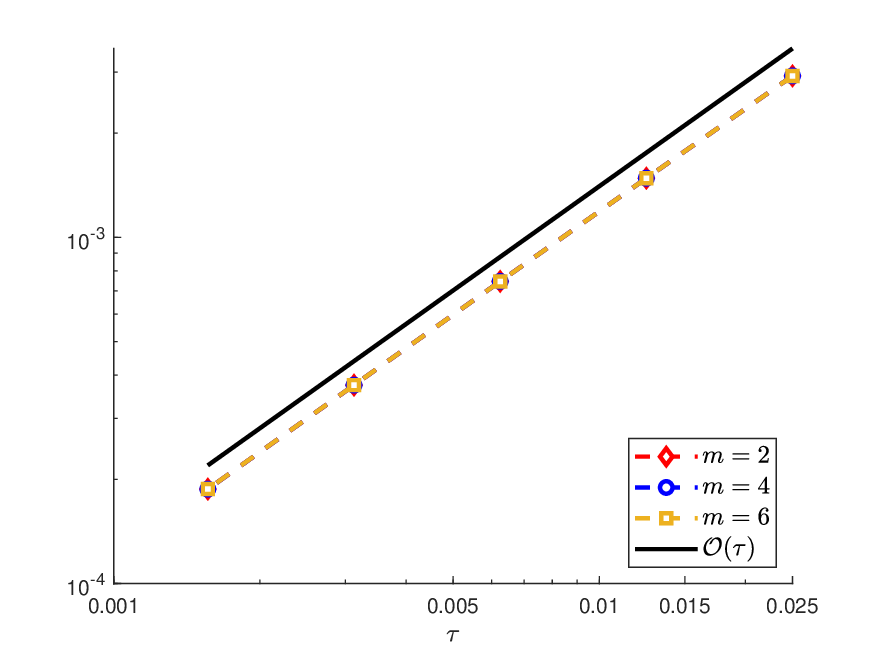}}
		\caption{Convergence in time for Example \ref{expl: dimension independence in small D}\label{fig: smallmultiD_Lie}}
	\end{figure}
	
	\begin{figure}[t!]
		\centering
		\subcaptionbox{Standard error of $S_{T}(\frac{\pi}{4})$ at $T = 1$}{\includegraphics[width=0.45\textwidth]{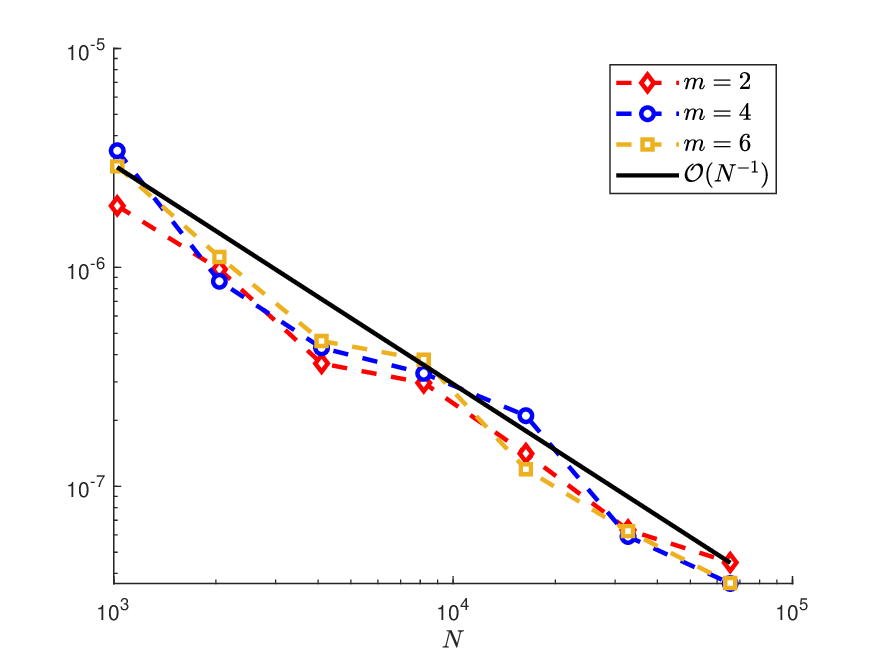}}
		\subcaptionbox{Standard error of $J_{T}(\frac{\pi}{4})$ at $T = 1$}{\includegraphics[width=0.45\textwidth]{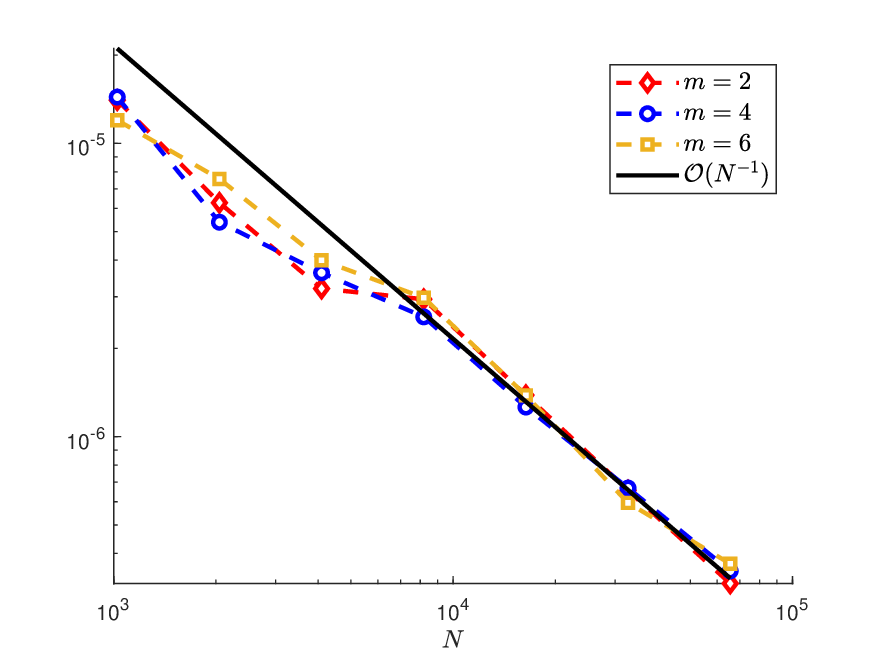}}
		\caption{Convergence in the number of samples for Example \ref{expl: dimension independence in small D}\label{fig: smallmultiD_QMC}}
	\end{figure}
	
	The next example shows the dimension-independence of the convergence of QMC in high dimensions.
	\begin{example} \label{expl: dimension independence in large D}
		Consider the Schr\"{o}dinger equation \eqref{eq: Schrodinger trun} with $\bT = [-\pi, \pi], T = 1$, the initial condition \eqref{eq: numerical initial condition} and the random potential \eqref{eq: numerical potential 1}. We consider $m = 8, 12, 16$.
	\end{example}
	
	We use Lie--Trotter splitting in QMC-TS. We should observe dimension-independent and almost linear convergence of QMC by Theorem \ref{thm: main}. We fix $\tau = 2 \times 10^{-5}, h = \frac{\pi}{64}$ and choose $R = 50$ and $N = 2^{10}, 2^{11}, \ldots, 2^{16}$. The standard errors of $S_T(\frac{\pi}{4})$ and $J_T(\frac{\pi}{4})$ are shown in Figure \ref{fig: largeD}. The convergence rates of the standard errors, which are fitted using errors corresponding to $N = 2^{13}, 2^{14}, 2^{15}, 2^{16}$, are shown in Table \ref{tab: largeD}. The standard errors do not vary much as $m$ changes, which confirms the dimension-independence of QMC quadrature error, and the convergence rates are close to $1$. These results validate our theories.
	
	\begin{figure}[t!]
		\centering
		\subcaptionbox{Standard error of $S_T(\frac{\pi}{4})$ at $T = 1$}{\includegraphics[width=0.45\textwidth]{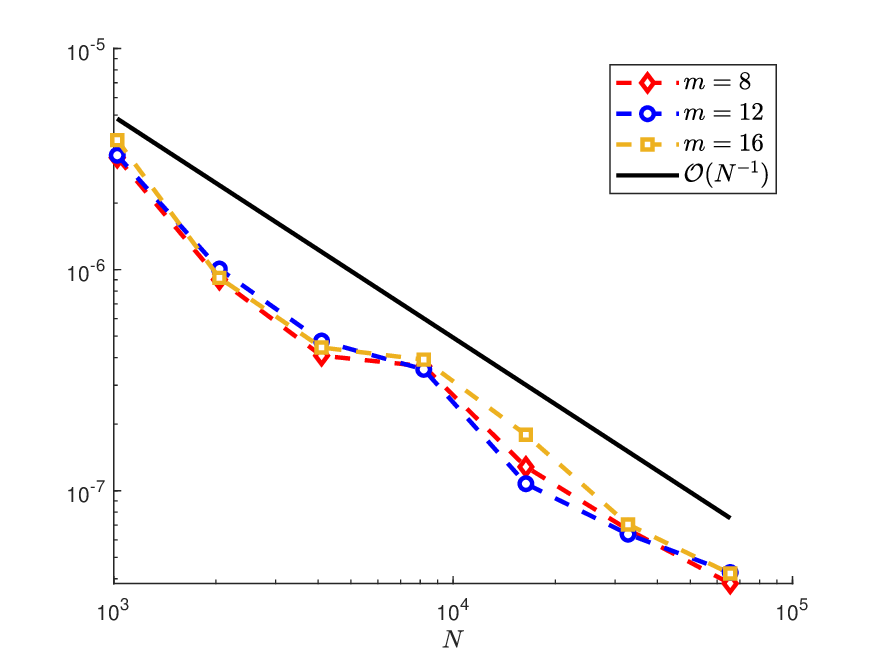}}
		\subcaptionbox{Standard error of $J_T(\frac{\pi}{4})$ at $T = 1$}{\includegraphics[width=0.45\textwidth]{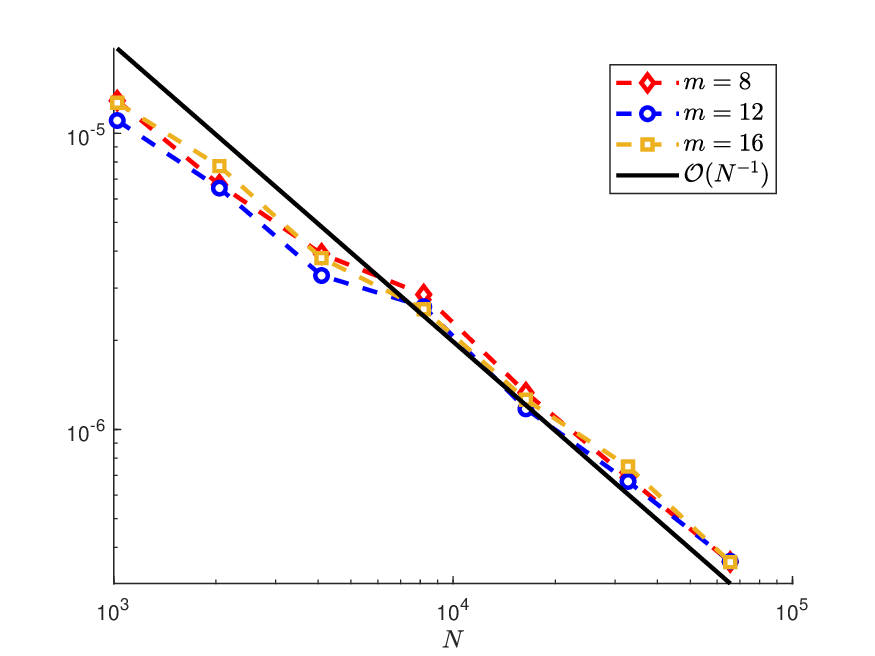}}
		\caption{Convergence of QMC in Example \ref{expl: dimension independence in large D}\label{fig: largeD}}
	\end{figure}
	
	\begin{table}[t!]
		\begin{subtable}{\linewidth}
			\centering
			\begin{tabular}{|c|c|c|c|}
				\hline
				& $m = 8$ & $m = 12$ & $m = 16$ \\
				\hline
				$S_T(\frac{\pi}{4})$ & 1.0738 & 0.9912 & 1.0989 \\
				$J_T(\frac{\pi}{4})$ & 0.9951 & 0.9379 & 0.9253 \\
				\hline
			\end{tabular}
			\caption{Example \ref{expl: dimension independence in large D}}
			\label{tab: largeD}
		\end{subtable}
		\newline
		\newline
		\begin{subtable}{0.45\linewidth}
			\begin{tabular}{|c|c|c|c|}
				\hline
				& $m = 8$ & $m = 12$ & $m = 16$ \\
				\hline
				$S_T(\frac{\pi}{4})$ & 0.7732 & 0.7850 & 0.7578 \\
				$J_T(\frac{\pi}{4})$ & 0.7890 & 0.8074 & 0.8737 \\
				\hline
			\end{tabular}
			\caption{Example \ref{expl: linear and sublinear convergence of QMC} with $\alpha = 9 / 4$}
			\label{tab: sublinear convergence}
		\end{subtable}
		\begin{subtable}{0.45\linewidth}
			\begin{tabular}{|c|c|c|c|}
				\hline
				& $m = 8$ & $m = 12$ & $m = 16$ \\
				\hline
				$S_T(\frac{\pi}{4})$ & 1.1881 & 1.1551 & 1.1459 \\
				$J_T(\frac{\pi}{4})$ & 0.9119 & 0.9201 & 0.9381 \\
				\hline
			\end{tabular}
			\caption{Example \ref{expl: linear and sublinear convergence of QMC} with $\alpha = 5 / 2$}
			\label{tab: linear convergence}
		\end{subtable}
		\caption{Fitted convergence rates of standard errors for Examples \ref{expl: dimension independence in large D}--\ref{expl: linear and sublinear convergence of QMC}}
	\end{table}
	
	The next example shows the effect of the decay rate of the randomness on the convergence rate of QMC.
	\begin{example} \label{expl: linear and sublinear convergence of QMC}
		Consider the Schr\"{o}dinger equation \eqref{eq: Schrodinger trun} with $\bT = [-\pi, \pi], T = 1$, the initial condition \eqref{eq: numerical initial condition} and the random potential
		\begin{align} \label{eq: numerical potential 2}
			V_m(\bxi, x) = 1 + \sum_{j = 1}^m \frac{1}{j^{\alpha}} \xi_j \cos(j x).
		\end{align}
		We consider $\alpha = \frac{9}{4}, \frac{5}{2}$ and $m = 8, 12, 16$.
	\end{example}

    Note that letting $\alpha = \frac{9}{4}, \frac{5}{2}$ would not satisfy $\sum_{j = 1}^{\infty} a_j < \infty$ with $s \ge 3$ in Assumption \ref{assp: summability of b}. However, we can still apply our QMC-TS to this example with finite $m$ by Point (4) in Section \ref{subsec: discussion} (although the temporal error may not be independent of $m$). Moreover, also note that $\sum_{j = 1}^{\infty} b_j^p < \infty$ is satisfied for some $p \in (0, 1]$ for both $\alpha$. By Point (5) in Section \ref{subsec: discussion}, we would obtain dimension-independent QMC convergence regardless of the splitting scheme for time discretization and whether $\sum_{j = 1}^{\infty} a_j < \infty$ is satisfied. Furthermore, Theorem \ref{thm: main} shows that the convergence rate of QMC would be approximately $\frac{3}{4}$ if $\alpha = \frac{9}{4}$ and would be almost linear if $\alpha = \frac{5}{2}$. Recall that we consider the standard errors of $S_T(\frac{\pi}{4})$ and $J_T(\frac{\pi}{4})$. Note that the RMSE (and hence the standard error) is independent of the temporal error, and thus the standard error can correctly show the QMC convergence even if the temporal error depends on $m$.
	
	We use the Strang splitting in QMC-TS. We fix $\tau = 10^{-4}, h = \frac{\pi}{64}$, and choose $R = 50$ and $N = 2^{10}, 2^{11}, \ldots, 2^{15}$. The standard errors are shown in Figure \ref{fig: linear and sublinear}, and the fitted convergence rates are shown in Tables \ref{tab: sublinear convergence} and \ref{tab: linear convergence}. We still see from Figure \ref{fig: linear and sublinear} that the standard errors do not vary much with different values of $m$, which again confirms the dimension-independence of convergence of QMC. Furthermore, we see from Table \ref{tab: sublinear convergence} that for $\alpha = \frac{9}{4}$ the fitted convergence rate of the standard error of $S_T(\frac{\pi}{4})$ is around the theoretical value $\frac{3}{4}$ and that of $J_T(\frac{\pi}{4})$ is slightly larger than $\frac{3}{4}$. On the other hand, we see from Table \ref{tab: linear convergence} for $\alpha = \frac{5}{2}$, the fitted convergence rates of the standard errors of both $S_T(\frac{\pi}{4})$ and $J_T(\frac{\pi}{4})$ are around the theoretical value $1$. These results show that faster decay of randomness in the potential leads to faster convergence of QMC, which is consistent with Theorem \ref{thm: main}.
	
	\begin{figure}[t!]
		\centering
		\subcaptionbox{Standard error of $S_T(\frac{\pi}{4})$ with $\alpha = \frac{9}{4}$}{\includegraphics[width=0.45\textwidth]{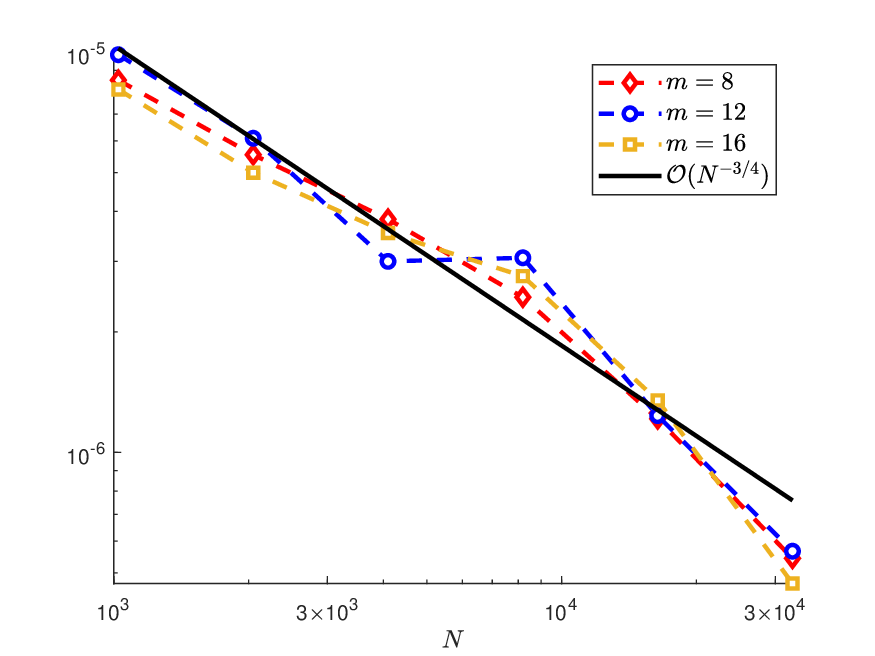}}
		\subcaptionbox{Standard error of $J_T(\frac{\pi}{4})$ with $\alpha = \frac{9}{4}$}{\includegraphics[width=0.45\textwidth]{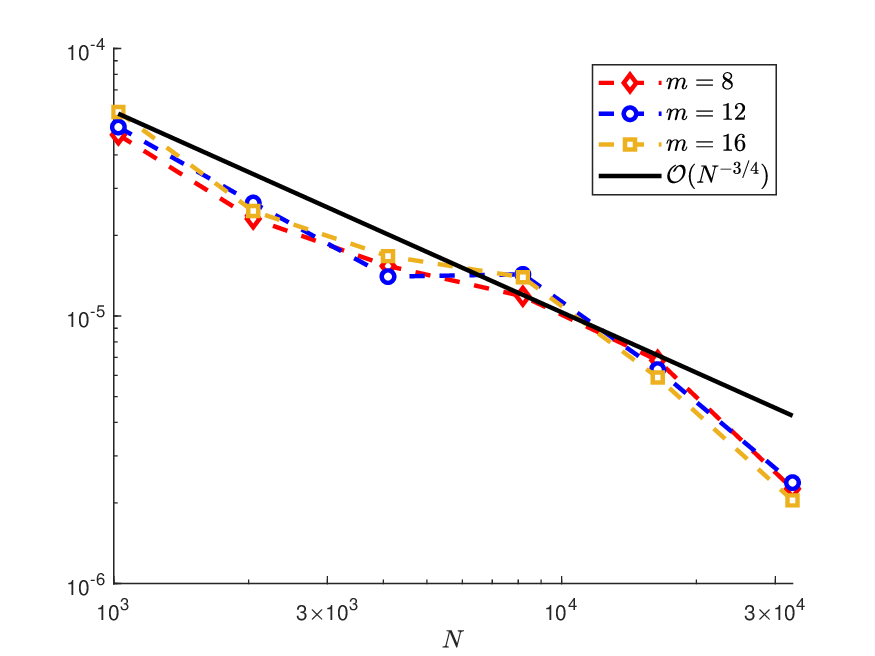}}
		\subcaptionbox{Standard error of $S_T(\frac{\pi}{4})$ with $\alpha = \frac{5}{2}$}{\includegraphics[width=0.45\textwidth]{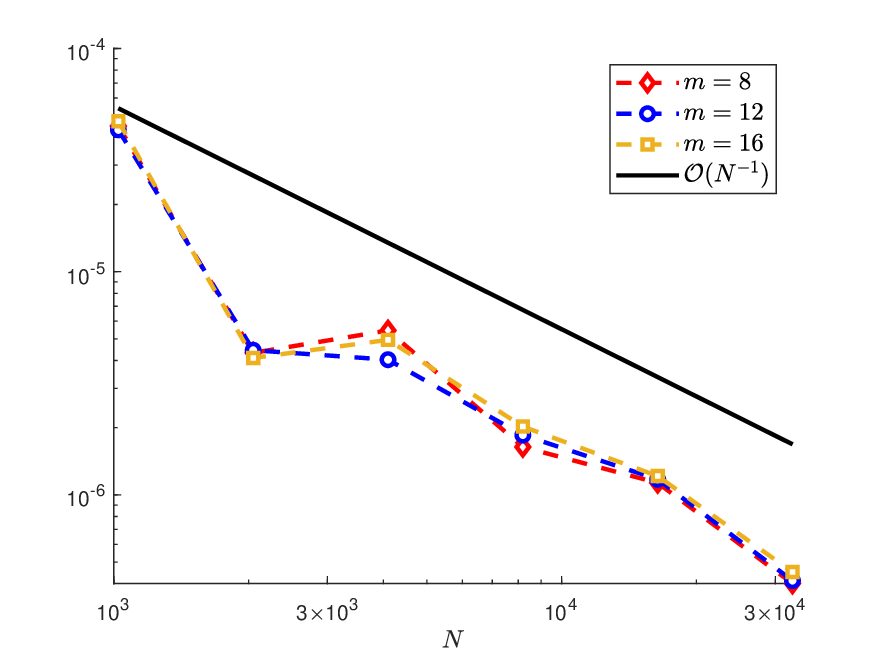}}
		\subcaptionbox{Standard error of $J_T(\frac{\pi}{4})$ with $\alpha = \frac{5}{2}$}{\includegraphics[width=0.45\textwidth]{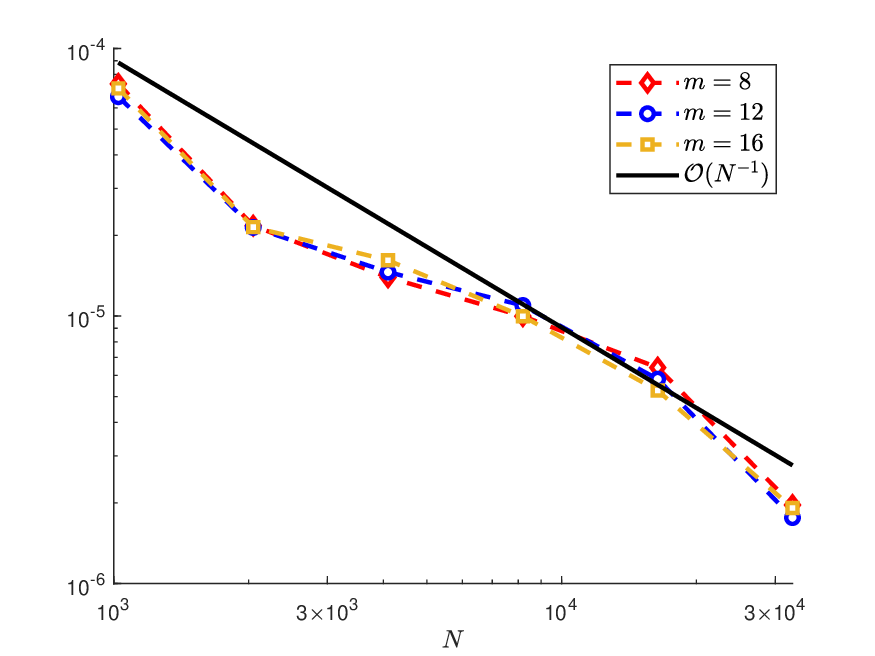}}
		\caption{Convergence of QMC in Example \ref{expl: linear and sublinear convergence of QMC}, with $T = 1$\label{fig: linear and sublinear}}
	\end{figure}

    We show the convergence of dimension truncation and explore the performance of our method in an extremely high-dimensional case in the last example.

    \begin{example} \label{expl: extreme highD}
        Consider the Schr\"{o}dinger equation \eqref{eq: Schrodinger trun} with $\bT = [-\pi, \pi], T = 1$, the initial condition \eqref{eq: numerical initial condition} and the random potential
		\begin{align} \label{eq: numerical potential 3}
			V_m(\bxi, x) = 1 + \sum_{j = 1}^m \frac{1}{j^{\alpha}} \xi_j v_j(x), \text{ with }
            v_j(x) = \left\{
            \begin{aligned}
                & \cos(kx), \quad j = 2k - 1, \\
                & \sin(kx), \quad j = 2k.
            \end{aligned}
            \right.
		\end{align}
    \end{example}

    We first consider the convergence of dimension truncation. We let $\alpha = \frac{5}{2}, 3$. Note that these two values of $\alpha$ do not make $\sum_{j = 1}^{\infty} a_j < \infty$ hold, but $\sum_{j = 1}^{\infty} b_j < \infty$ is satisfied. Moreover, we can see from the proofs in Section \ref{subsec: dimension truncation error} that the dimension truncation error estimate in Lemma \ref{lem: dimension truncation of physical observable} can be obtained as long as $\sum_{j = 1}^{\infty} b_j < \infty$. Therefore, it is feasible to test the convergence of dimension truncation using this example with the chosen values of $\alpha$. By Theorem \ref{thm: main}, the convergence rates $\chi$ of dimension truncation should be $\frac{1}{2}$ and $1$ for $\alpha = \frac{5}{2}, 3$, respectively. For each $\alpha$, we compute the reference solutions using Strang splitting in QMC-TS with $m = 128, \tau = 10^{-4}, h = \frac{\pi}{128}, R = 50$, and $N = 2^{15}$. The numerical solutions are also computed using Strang splitting in QMC-TS with the same $\tau, h, R, N$, where we choose $m = 2, 4, 8, 16, 32, 64$. The $L^2$ relative errors are shown in Figure \ref{fig: extreme_highD_dimension}, where we see that the convergence rates of dimension truncation are much faster than the theoretical values, which indicates that our error estimate for dimension truncation in Theorem \ref{thm: main} might not be optimal.

    Finally, we test QMC convergence of our method in an extremely high-dimensional case. We let $\alpha = \frac{21}{10}$ and $m = 100$. Using the same arguments as in the previous example, it is feasible to test the QMC convergence in this example using this value of $\alpha$ (although we do not have $\sum_{j = 1}^{\infty} a_j < \infty$ in this case). Furthermore, by Theorem \ref{thm: main}, the QMC convergence rate would be approximately $\frac{3}{5}$ for $\alpha = \frac{21}{10}$. The numerical solutions are computed using Strang splitting in QMC-TS with $\tau = 10^{-4}, h = \frac{\pi}{128}$, where we choose $R = 50$ and $N = 2^{10}, 2^{11}, \ldots, 2^{16}$. The standard errors of $S_{T}(\frac{\pi}{4})$ and $J_{T}(\frac{\pi}{4})$ are shown in Figure \ref{fig: extreme_highD_QMC}, and the convergence rates, which are fitted using errors corresponding to $N = 2^{12}, 2^{13}, \ldots, 2^{16}$, are 0.6706 and 0.6725 for the standard errors of $S_{T}(\frac{\pi}{4})$ and $J_{T}(\frac{\pi}{4})$, respectively, which are consistent with Theorem \ref{thm: main}. Therefore, our QMC-TS works well for the sampling in our problem in extremely high-dimensional cases with slow decay of randomness, and the QMC convergence rate is consistent with the theory we develop.

    \begin{figure}[t!]
		\centering
		\subcaptionbox{$L^2$ relative error of $S$ at $T = 1$}{\includegraphics[width=0.45\textwidth]{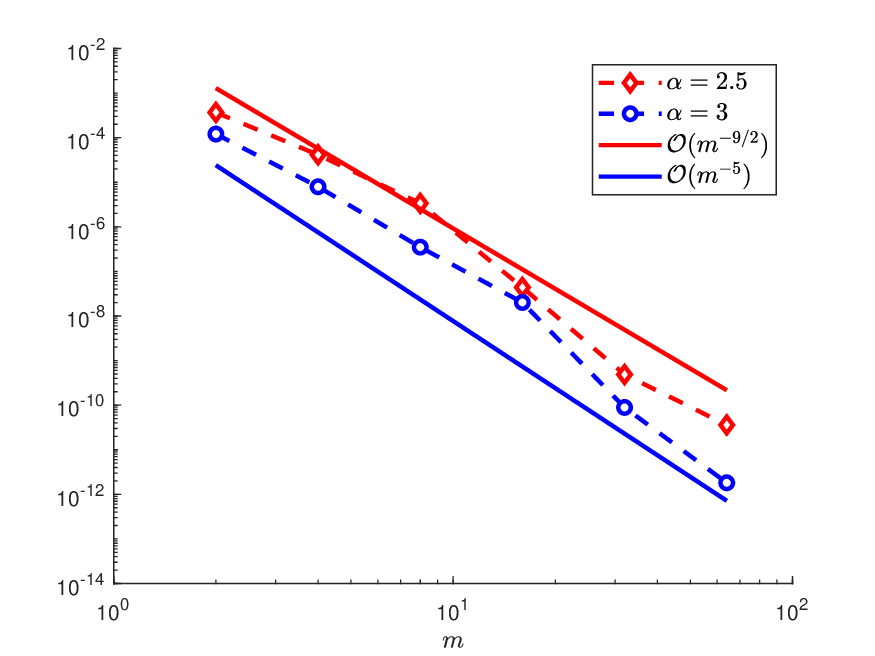}}
		\subcaptionbox{$L^2$ relative error of $J$ at $T = 1$}{\includegraphics[width=0.45\textwidth]{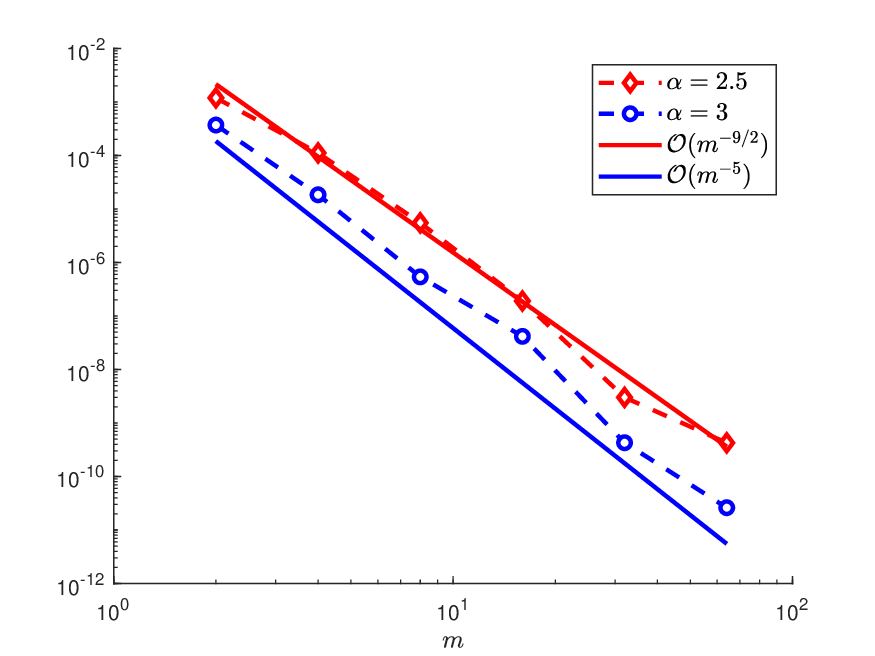}}
		\caption{Convergence of dimension truncation for Example \ref{expl: extreme highD}\label{fig: extreme_highD_dimension}}
	\end{figure}

    \begin{figure}[t!]
		\centering
		\subcaptionbox{Standard error of $S_{T}(\frac{\pi}{4})$}{\includegraphics[width=0.45\textwidth]{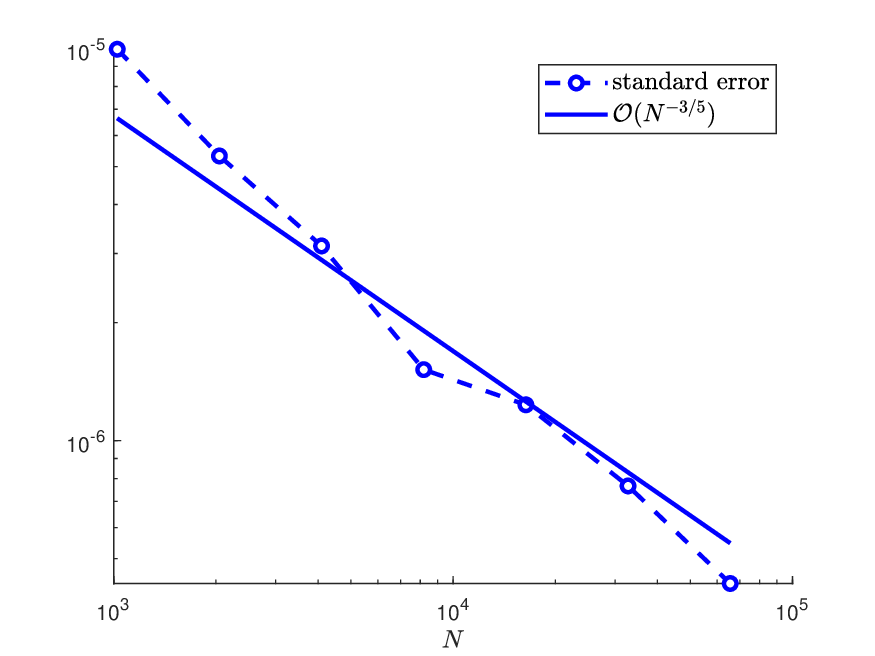}}
		\subcaptionbox{Standard error of $J_{T}(\frac{\pi}{4})$}{\includegraphics[width=0.45\textwidth]{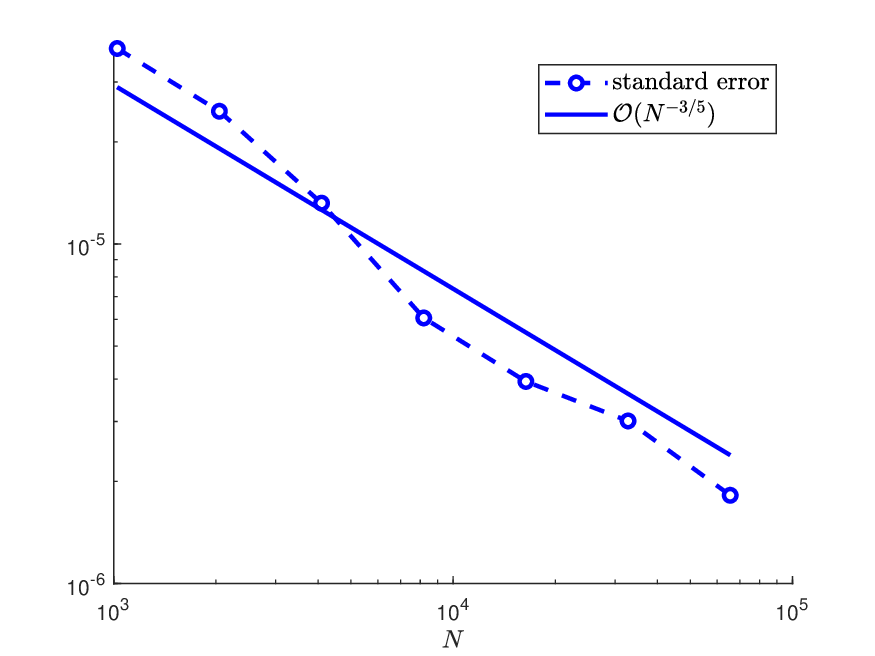}}
		\caption{Convergence of QMC for Example \ref{expl: extreme highD}, with $T = 1$ and $\alpha = \frac{21}{10}$\label{fig: extreme_highD_QMC}}
	\end{figure}

	\section{Conclusion}
	\label{sec:Conclusion}
	
	We developed a quasi-Monte Carlo time-splitting scheme to solve the Schr\"{o}dinger equation with a Gaussian random potential, with a particular focus on approximating the expectation of physical observables. By the technique of a non-standard weighted Sobolev space, we proved a dimension-independent convergence rate of the proposed scheme, which is almost linear with respect to the number of QMC samples. The sharpness of our theoretical error estimates was demonstrated by numerical experiments, which shows the efficiency of our scheme. In the future, we will extend the current work by considering the fully discrete scheme and the nonlinear Schr\"{o}dinger equation.

	\section*{Acknowledgement}
	X. Zhao is supported by the National Key Research and Development Program of China (Project 2024YFE03240400) and the National Natural Science Foundation of China (Projects 42450275 and 12271413). Z. Zhang was supported by the National Natural Science Foundation of China  (Projects 92470103 and 12171406), the Hong Kong RGC grant (Projects 17304324 and 17300325), seed funding from the HKU-TCL Joint Research Center for Artificial Intelligence, and the Outstanding Young Researcher Award of HKU (2020-21). The authors would like to thank Professor Ivan G. Graham (University of Bath), Professor Frances Kuo (University of New South Wales), and Doctor James A. Nichols (Australian National University) for helpful discussions on the coding for constructing the generating vector by the CBC algorithm. The computations were performed using research computing facilities provided by Information Technology Services, The University of Hong Kong.
	
	\bibliographystyle{siam}
	\bibliography{reference}
	
\end{document}